\title[Embedded eigenvalues in spectral bands]{  Sharp spectral transition for eigenvalues embedded into the spectral bands of perturbed periodic operators}
\author{Wencai Liu}
\address[Wencai Liu]{Department of Mathematics, University of California, Irvine, California 92697-3875, USA}\email{liuwencai1226@gmail.com}
\author{ Darren C. Ong}
\address[  Darren C. Ong]{Department of Mathematics, Xiamen University Malaysia, Jalan Sunsuria, Bandar Sunsuria, Sepang, 43900, Selangor, Malaysia}
\urladdr{https://dongcl.wixsite.com/darrenong}
\email{darrenong@xmu.edu.my}
\theoremstyle{plain}
\newtheorem{theorem}{Theorem}[section]
\newtheorem{corollary}[theorem]{Corollary}
\newtheorem{lemma}[theorem]{Lemma}
\newtheorem{proposition}[theorem]{Proposition}
\newcommand{\R}{\mathbb{R}}
\newcommand{\Z}{\mathbb{Z}}
\theoremstyle{definition}
\newtheorem{remark}[theorem]{Remark}
\def\dco#1{\textcolor{black}{#1}}
\def\liu#1{\textcolor{black}{#1}}
\begin{document}

%%% ----------------------------------------------------------------------

\begin{abstract}
In this paper,
we consider  the Schr\"odinger  equation,
\begin{equation*}
    Hu=-u^{\prime\prime}+(V(x)+V_0(x))u=Eu,
\end{equation*}
where $V_0(x)$ is 1-periodic and $V (x)$ is a decaying  perturbation.
By Floquet theory,  the spectrum
of $H_0=-\nabla^2+V_0$ is purely absolutely continuous and
consists of a union of closed intervals (often referred to as spectral bands).
Given any finite set of points  $\{ E_j\}_{j=1}^N$ in any spectral band of $H_0$ obeying a mild non-resonance condition,
we construct smooth  functions $V(x)=\frac{O(1)}{1+|x|}$ such that
$H=H_0+V$ has eigenvalues $\{ E_j\}_{j=1}^N$.
Given any countable set of  points  $\{ E_j\}$ in any spectral band of $H_0$ obeying the same non-resonance condition,
and any function $h(x)>0$ going  to infinity arbitrarily slowly,
we construct smooth  functions $|V(x)|\leq \frac{h(x)}{1+|x|}$ such that
$H=H_0+V$ has eigenvalues $\{ E_j\}$. On the other hand, we show that there is no eigenvalue of $H=H_0+V$ embedded in the spectral bands if  $V(x)=\frac{o(1)}{1+|x|}$ as $x$ goes to infinity. We prove also an analogous result for Jacobi operators.

\end{abstract}
%%% ----------------------------------------------------------------------
\maketitle
%%% ----------------------------------------------------------------------
%%%%%%%%%%%%%%%%%%%%%%%%%%%%%%%%%%%%%%%%%%%%%%%%%%%%%%%%%%%%%%%%%%%%%%%%%%
%INTRODUCTION%%%%%%%%%%%%%%%%%%%%%%%%%%%%%%%%%%%%%%%%%%%%%%%%%%%%%%%%%%%%%
%%%%%%%%%%%%%%%%%%%%%%%%%%%%%%%%%%%%%%%%%%%%%%%%%%%%%%%%%%%%%%%%%%%%%%%%%%
\section{Introduction}

In this paper, we consider  the Schr\"odinger  equation,
\begin{equation}\label{Gu}
    Hu=-u^{\prime\prime}+(V(x)+V_0(x))u=Eu,
\end{equation}
where $V_0(x)$ is 1-periodic and $V (x)$ is a decaying  perturbation.

When $V\equiv 0$, we have an unperturbed $1$-periodic Schr\"odinger equation,
\begin{equation}\label{GV}
    H_0\varphi=-\varphi^{\prime\prime}+V_0(x)\varphi=E\varphi.
\end{equation}
We also consider a Jacobi eigenvalue equation, %. In other words, with boundary condition $u(-1)=0$  we consider the difference equation
\begin{equation}\label{Jeq:JacobiUnperturbed}
  (J_0u)(n):=  a_{n+1} u({n+1})+a_n u({n-1})+b_{n+1}u(n)=Eu(n), n\geq 0,
\end{equation}
 where the $\{a_j, b_j\}$ are real sequences indexed by $j\geq 1$ with $a_j$ assumed to be positive.
Alternatively, we can view this eigenvalue equation in terms of a operator on $\ell^2(\mathbb Z_{\geq 0})$.
 We also consider perturbations of this equation, namely,
 \begin{equation}\label{Jeq:JacobiPerturbed}
  (Ju)(n)=  (a_{n+1}+a'_{n+1}) u({n+1})+(a_n+a_n') u({n-1})+(b_{n+1}+b'_{n+1})u(n)=Eu(n), n\geq 0,
 \end{equation}
 where $a'_j$ and $b'_j$ are real sequences chosen so $a_j+a'_j$ is always positive. Let  us assume in addition that the $a_j$ and $b_j$ sequences are periodic with period $q\geq 1$.

The present paper is the combination of our two preprints \cite{liu2018sharp1} and \cite{liu2018sharp}. These two preprints are not intended for publication.

Through basic Floquet theory, we know that the essential spectrum of the operators $H_0$ and $J_0$ both consist of absolutely continuous bands. Our goal is to identify perturbations that leave the absolutely continuous spectrum unchanged, but also produce embedded singular spectrum in these absolutely continuous bands.

 This is a problem with a long history. Let us consider first a special case, the free Schr\"odinger operator (that is, the operator $H_0$ in the case where $V_0\equiv 0$). Here the absolutely continuous spectrum is the interval $[0,\infty)$. For this operator, the classical Wigner-von Neumann result \cite{von1929uber} introduces a decaying oscillatory perturbation that produces a single embedded eigenvalue at $E=1$. Following this, it has been an enduring topic of interest in inverse spectral theory to find perturbations of the free operator that produce embedded point spectrum in $[0,\infty)$: see for instance \cite{jl1,jitomirskaya2018noncompact,KRS, KrugerJMAA12,LukicJST13,LukicCMP14,naboko1986dense,kiselev2005imbedded,simon1997some,RemlingPAMS2000,agmon1975}. See also \cite{DenisovKiselev} for a more detailed survey of results in this area.

A natural next step is to understand how to produce embedded point spectrum when $V_0 \not\equiv 0$. This more general problem has attracted recent interest \cite{LukicOngTAMS,Simonov2016,KRS,LotoreichikSimonov}. In addition, there has also been work done in embedded point spectrum for the spectral bands of other periodic operators, such as the Jacobi operator \cite{JudgeNabokoWood2016,nabokostu18,lukdcmv} and the CMV operator (\cite{lukdcmv} and \cite[Section 12.2]{SimonOPUC2}).

Our paper's main thrust may be summarized as follows. Let $V_0(x)$ be any $1$-periodic potential function, and consider any countable set $S$ embedded in a band of the essential spectrum of $H_0$ in \eqref{GV}. If $S$ satisfies a mild non-resonance condition, we then carefully construct a perturbation $V$ of $V_0$ so that the essential spectrum remains unchanged, and eigenvalues appear at every point in $S$. In other words, for a given band we can find a perturbation that can produce any embedded point spectrum we desire, as long as our set of eigenvalues obeys that weak non-resonance condition.% We prove an analogous result for $J_0$.

Our choice of perturbation is inspired by the one introduced in \cite{jitomirskaya2018noncompact}. Of course, since we are perturbing a periodic operator rather than a free operator the construction is different, and in many ways much more challenging. Rather than using the standard Pr\"ufer variables, we have to instead use the generalized Pr\"ufer variables introduced in \cite{KRS}, which are a lot more complicated. The main contribution of this paper is in Section  \ref{Section:Prep} where we have to perform several precise estimates on these generalized Pr\"ufer variables. One key innovation in this section is the use of a Fourier expansion to ensure that some key terms in our construction decay sufficiently quickly. After the Fourier expansion, we end up having to bound some decaying oscillatory functions, and we accomplish this by carefully ensuring that the positive parts and the negative parts of the decaying oscillations cancel out well enough.
The ideas in Section \ref{Section:Prep} are all new, and it is perhaps the most technically complicated part of our paper. We remark that the free perturbation setting explored in \cite{jitomirskaya2018noncompact} does not contain the obstacles we have to overcome here in Section \ref{Section:Prep}. \liu{ Actually, our result implies the almost orthogonalization  of  generalized Pr\"ufer angles
in a suitable Hilbert space, which allows us to investigate the distribution of  embedded eigenvalues \cite{liu2018asymptotical}. We believe our
  analysis provides  a useful  tool to tackle other topics in  the spectral theory of perturbed periodic operators.}

Our construction is an improvement over previous results in a few important ways. For example, the construction in Theorem 4 of  \cite{LukicOngTAMS} only produces a single embedded eigenvalue in each band.    In \cite{KRS}, Theorem 4.2 we are presented with a construction that can produce dense embedded point spectrum, but only if the desired eigenvalues satisfy a rational independence condition.
 The reason for these technical restrictions in previous results is that while it is not too  difficult to control the growth of the formal eigenfunction for one eigenvalue, simultaneously dealing with multiple eigenvalues at once is problematic. Point spectra are in a sense very fragile, so modifying a perturbation $V(x)$ to produce one eigenvalue often destroys the other eigenvalues. Thus simultaneously producing two embedded eigenvalues in a band is challenging, let alone infinitely many. We were able to overcome this problem   by making very careful choices in our construction of $V$.

We do admit a technical restriction on $S$, a non-resonance condition. Each point of every spectral band is assigned a \em quasimomentum \em, which is a phase parameter in $[0,\pi)$ related to the Floquet solution of the unperturbed periodic operator equation \eqref{GV}. Given any two points in $S$, we require that their quasimomenta not sum to $\pi$. This is a very natural condition that appears almost universally in the embedded eigenvalues literature. For example, in \cite{LukicOngTAMS} this non-resonance condition is addressed in their Lemma 13 (expressed as a condition on Fourier coefficients). In \cite{JanasSimonov} this condition is described as the complement of energies $\{\pm 2\cos (\omega), \pm 2\cos (2\omega)\}$. We emphasize that our condition is a much weaker than that the restriction in \cite[Theorem 4.2]{KRS}, which requires the set of quasimomenta to be rationally independent with each other and with $\pi$. In particular, if we restrict ourselves to half of the spectral band (e.g., the half of the band corresponding to quasimomenta in $(0,\pi/2)$) we can allow $S$ to be a completely arbitrary countable set.

Furthermore, by carefully tweaking our construction, we are able to ensure that our perturbation $V(x)$ can be made to a smooth function. This smoothness is known to be difficult to achieve even for the case when $V_0\equiv 0$. We are able to ensure smoothness due to the iterative nature of our construction, which allows us to make small, precise adjustments to the $V(x)$ function at each step to make it smooth, while still controlling the size of all the eigenfunctions.

With regard to the Jacobi versions of our  result, we remark that ours is a  very significant improvement over previous results in the literature. Eigenvalues are in a sense very fragile, and so forcing multiple embedded eigenvalues to appear simultaneously is often challenging. Compare for instance the result in \cite{nabokostu18}, which introduces a perturbation that can only produce two embedded eigenvalues.  \liu{In another very  recent paper \cite{naboko18}, the authors employed  a geometric method to construct embedded eigenvalues. While they are able to construct finitely many eigenvalues, to embed infinitely many eigenvalues they require a rational independence condition which our result does not require. }

Note also that the proof that the construction produces the desired set of eigenvalues is more difficult in the Jacobi setting compared to the continuous Schr\"odinger setting. The spectral transition of embedded eigenvalues for discrete operators heavily depends on the arithmetic properties of quasimomenta. For example, the sharp transition for a single embedded eigenvalue for the continuous was known 40 years ago \cite{atk}, dating back to \cite{Hal}. However, similar results for the discrete case  are still open \cite{liu2018criteria}. In addition, the generalized Pr\"ufer  transformations are singular for the discrete setting.  Although  the proof of the continuous and discrete case looks similar, the understanding and mathematical principles behind them are significantly different.  In this paper,
 the construction for the continuous case  can be bounded by a constant in the continuous case, but in the Jacobi setting those same terms are bounded by a term that grows like $\varepsilon \ln n$ for small positive $\varepsilon$ and as $n\to\infty$, which  leads to an additional parameter in the construction.\color{black}
 %For this reason ensuring that our solutions remain $\ell^2$ requires more delicate handling compared to the continuous Schr\"odinger setting.

Our paper is organized in the following way. In Section \ref{Section:Main} we will introduce notation and state our results.  We first address our proofs in the continuous Schr\"odinger setting.
In Section \ref{Section:Small} we will prove a result complementary to our main results: that no embedded eigenvalues will be produced if our perturbation is small. \dco{Section \ref{Section:Summary} is when we begin to address our main theorem. This section is just a non-technical summary of our method, aimed to give the reader an intuition about how our construction works.} We will prove important technical estimates in Section \ref{Section:Prep}, and in Section \ref{Section:Construct} we will show how to construct $V(x)$.
In the next sections, we prove results in the Jacobi setting. In Section \ref{sec:Jacobi1}, we discuss Pr\"ufer variables and the discrete analogue of our auxiliary small perturbation result. In Section \ref{sec:Jacobi2} we prove our main results concerning embedded eigenvalues, mainly explaining the parts of the proof that differ from the continuous Schr\"odinger setting. For the readers' convenience, we write out explicitly the proofs for the Jacobi setting in the Appendix.
\section{Main Results}\label{Section:Main}
We consider a Floquet solution $\varphi$ of \eqref{GV}, which has the following form
\begin{equation}\label{Gfqp}
    \varphi(x,E)=p(x,E)e^{i k(E)x}
\end{equation}
 where $k(E)$ is the quasimomentum, and $p(x,E)$ is 1-periodic.

 It is known that the  spectrum of $H_0$ (on the whole line)  is purely absolutely continuous and
consists of a union   closed
intervals (often referred to as   bands). We denote
\begin{equation*}
  \sigma_{\rm ac}(H_0) = \sigma_{\rm ess}(H_0)=\bigcup_{k}[c_k,d_k].
\end{equation*}
In each band $[c_k,d_k]$, $k(E)$  is monotonically increasing from $0$ to $\pi$ or monotonically decreasing from $\pi$ to $0$.
Any two of those bands  can intersect at most at one  point.
By Weyl's theorem, $\sigma_{\rm ess}(H)=\sigma_{\rm ess}(H_0)$ if $\limsup_{x\to \infty}|V(x)|=0$.
\begin{theorem}\label{mainthm1}
Suppose
\begin{equation}\label{Gassum1}
    V(x)=\frac{o(1)}{1+x}
\end{equation}
as $x\to \infty$.  Let $H=H_0+V$.  Then there exists no non-trivial $L^2(\R^+)$ solution of $Hu=Eu$ for any $E\in \cup_k (c_k,d_k)$. More precisely,
if for some  $E\in\cup _n(c_k,d_k)$  the solution $u$ of $Hu=Eu$ satisfies $u\in L^2(\R^+)$, then $u\equiv 0$.
%\begin{equation*}
%    \sigma_{\rm p}(H)\cap (a_n,b_n)=\emptyset {\text { for all } } n.
%\end{equation*}

\end{theorem}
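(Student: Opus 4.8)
The plan is to study the solution $u$ of $Hu = Eu$ for $E$ in the interior of a band via the generalized Pr\"ufer transformation with respect to the Floquet solutions of the unperturbed periodic equation \eqref{GV}. Since $E \in (c_k,d_k)$, the quasimomentum $k(E) \in (0,\pi)$, so there are two linearly independent Bloch solutions $\varphi(x,E) = p(x,E)e^{ik(E)x}$ and $\overline{\varphi(x,E)}$ of the periodic equation, and their Wronskian is a nonzero constant. Writing $u$ and $u'$ in the basis given by $\varphi$ and $\overline\varphi$ defines Pr\"ufer-type variables $R(x) > 0$ and $\theta(x)$ by something like $u(x) = R(x)\bigl(e^{i\theta(x)}\varphi(x,E) + e^{-i\theta(x)}\overline{\varphi(x,E)}\bigr)$ (up to normalization), and these satisfy a first-order system in which the right-hand side is linear in the perturbation $V(x)$. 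The key structural fact is that $\frac{d}{dx}\log R(x)$ equals $V(x)$ times a bounded function of $x$ and $\theta(x)$ — concretely a trigonometric polynomial in $\theta(x)$ with coefficients that are bounded $1$-periodic functions of $x$ built from $p(x,E)$.

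The next step is the crucial quantitative one: integrate the equation for $\log R$ from $0$ to $L$. The oscillatory/``resonant'' contribution to $\log R(L)$ is controlled because the dangerous terms carry oscillating factors $e^{\pm 2i k(E) x}$ (or $e^{\pm 2i\theta(x)}$ combined with the periodic structure), and integrating against $V(x) = o(1)/(1+x)$ one gets, after an integration by parts or an Abel-summation-type argument exploiting that $V$ is slowly varying relative to the oscillation, a bound $\log R(L) = o(\log L)$ — in fact one needs $\log R(L) - \log R(0) = o(\log L)$ and the negative part bounded similarly, so that $\liminf_{L\to\infty} R(L)/L^{-\epsilon} = \infty$ for every $\epsilon > 0$, and similarly $R$ cannot decay faster than any negative power. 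Combined with $|u(x)|^2 + |u'(x)|^2 \asymp R(x)^2$ (valid since $p(x,E)$ and $p'(x,E)$ are bounded away from $0$ and $\infty$ on the compact interval of periods, using $k(E)\in(0,\pi)$ so the two Bloch solutions stay independent), this forces $\int_0^L |u|^2\,dx$ to diverge like $\int_0^L R(x)^2\,dx$, which cannot be finite unless $R \equiv 0$, i.e. $u \equiv 0$.

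More carefully, what I would actually prove is a two-sided estimate $\bigl|\log R(L) - \log R(0)\bigr| \le C_E \int_0^\infty \frac{|V(x)|}{1+x}\cdot(\text{something}) \,$... no — rather, that the \emph{non-oscillatory} part of $\int_0^L (\log R)'$ vanishes identically (the ``diagonal'' term in the Pr\"ufer equation for $\log R$ is purely off-diagonal/oscillatory when one uses the Bloch basis, a standard feature of the KRS generalized Pr\"ufer variables) and the oscillatory part is $o(\log L)$. Then $R(L) = L^{o(1)}$ as $L\to\infty$ and also $R(L)^{-1} = L^{o(1)}$, so $\int^L R^2 \gtrsim \int^L x^{-\epsilon}\,dx \to \infty$. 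Hence $u \notin L^2(\mathbb{R}^+)$ unless it is trivial. For $E$ an edge of a band ($k(E) \in \{0,\pi\}$) the argument would break — but the statement only claims $E \in (c_k,d_k)$, so we are safe; and if two bands touch, the touching point is excluded as an interior point of the relevant open intervals anyway.

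The main obstacle is the oscillatory estimate: showing that $\int_0^L V(x) g(x,\theta(x))\,dx = o(\log L)$ where $g$ contains the resonant harmonics. One cannot treat $\theta(x)$ as frozen, since $\theta$ itself drifts; the standard fix is to first show $\theta(x) - k(E)x$ (or the appropriate linearized phase) has controlled variation — its derivative is again $O(|V(x)|)$ times bounded, hence $\int |\theta' - \text{const}| < \infty$ is \emph{not} available (only $o(1)/(1+x)$, which is not integrable), so instead one substitutes the integral equation for $\theta$ into the oscillatory integral and iterates, peeling off one factor of $V$ at a time. Because each iteration produces an extra factor decaying like $o(1)/(1+x)$, the error terms improve, and a single integration by parts on the leading oscillatory term $\int_0^L \frac{o(1)}{1+x} e^{\pm 2ik(E)x}\,dx$ — legitimate once $V$ is taken smooth or after mollification, or via a summation-by-parts over unit intervals using only that $V$ is $o(1)/(1+x)$ and of bounded variation on each period, which is where some care with the hypotheses is needed — gives the $o(\log L)$ bound. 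This is exactly the place where the ``$o$'' (rather than ``$O$'') in the hypothesis \eqref{Gassum1} is essential and where the $1$-periodic structure of $p(x,E)$ must be handled, e.g. by expanding $g(x,\theta)$ in a Fourier series in $x$ and separating the genuinely resonant frequency from the rest.
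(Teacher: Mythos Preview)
Your overall strategy is correct and matches the paper's: use the generalized Pr\"ufer variables of \cite{KRS} (Proposition~\ref{thmformula}), in which $(\ln R)' = \frac{V(x)}{2\gamma'(x)}\sin 2\theta(x)$, and conclude that $R$ cannot decay fast enough for $u\in L^2$. However, you badly overcomplicate what you call ``the main obstacle.'' There is no oscillatory estimate to perform, no integration by parts, no Fourier expansion, and no iteration on $\theta$: since $|\sin 2\theta|\le 1$ and $\gamma'(x)\ge G^{-1}>0$ (see \eqref{Ggammad}), the trivial pointwise bound
\[
\bigl|(\ln R(x))'\bigr|\;\le\;\frac{G}{2}\,|V(x)|\;=\;\frac{o(1)}{1+x}
\]
already gives $|\ln R(L)-\ln R(x_0)|\le \int_{x_0}^L \frac{o(1)}{1+y}\,dy = o(\ln L)$ directly. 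The paper's proof is literally three lines: pick $x_0$ large enough that the $o(1)$ factor is below $1/3$ (say), integrate to get $R(x)\ge \widetilde C^{-1}x^{-1/3}$, and contradict $u\in L^2$ via \eqref{Gformulanorm}. Your machinery of resonant harmonics, Abel summation, and phase drift is exactly what is needed for the \emph{converse} direction (Theorems~\ref{mainthm2}--\ref{mainthm3} and Section~\ref{Section:Prep}), where one must extract genuine cancellation from the $\sin 2\theta$ factor against an $O(1)/(1+x)$ potential; for the $o(1)/(1+x)$ regime of Theorem~\ref{mainthm1} none of it is required.
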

%for any set $A$, define $A+A=\{x+y:x,y\in A\}$.
\begin{theorem}\label{mainthm2}
Suppose $\{ E_j\}_{j=1}^N\subset \cup_k (c_k,d_k)$  such that quasimomenta $\{k(E_j)\}_{j=1}^N$ are different.
Suppose for any $i,j\in\{1,2,\cdots,N\} $, $k(E_i)+k(E_j)\neq \pi$.
Then for any given $\{\theta_j\}_{j=1}^N\subset [0,\pi]$,
there exist  functions $V\in C^{\infty}[0,\infty)$ such that
\begin{equation}\label{Ggoalb}
    V(x)=\frac{O(1)}{1+x}
\end{equation}
as $x\to \infty$ and
\begin{equation*}
   Hu=E_ju
\end{equation*}
has an $L^2(\R^+)$ solution with boundary condition
\begin{equation*}
    \frac{u^\prime(0)}{u(0)}=\tan\theta_j.
\end{equation*}

\end{theorem}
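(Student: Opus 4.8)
The plan is to construct $V$ as a single perturbation assembled from $N$ Wigner--von Neumann type blocks, one resonant with each $E_j$, and to run the generalized Pr\"ufer analysis of \cite{KRS} to show that the block attached to $E_j$ forces an $L^2(\R^+)$ solution at $E_j$ with the prescribed boundary data, while the cross-interactions between distinct blocks are neutralized by the non-resonance hypotheses. In detail: for $E$ interior to a band, let $\varphi(x,E)=p(x,E)e^{ik(E)x}$ be the Floquet solution of \eqref{GV} and let $iW_0(E)$ (with $W_0(E)\in\R\setminus\{0\}$) be the Wronskian of $\varphi$ and $\bar\varphi$. Writing a real solution $u$ of $Hu=Eu$ as $u=2\,\Re(\alpha(x)\varphi(x,E))$ with $\alpha=\rho e^{i\theta}$ (the generalized Pr\"ufer variables of \cite{KRS}, also used in the proof of Theorem \ref{mainthm1}), variation of parameters yields
\begin{align*}
(\ln\rho)'(x)&=\frac{V(x)}{W_0(E)}\,\Im\bigl(e^{2i\theta(x)}\varphi(x,E)^2\bigr),\\
\theta'(x)&=\frac{V(x)}{W_0(E)}\Bigl(|\varphi(x,E)|^2+\Re\bigl(e^{2i\theta(x)}\varphi(x,E)^2\bigr)\Bigr).
\end{align*}
Since $|u(x)|^2+|u'(x)|^2\asymp\rho(x)^2$ uniformly in $x$ (the periodic factors are bounded above and the Wronskian is a nonzero constant), the solution with boundary condition $u'(0)/u(0)=\tan\theta_j$, which amounts to fixing $\theta(0)$, lies in $L^2(\R^+)$ as soon as $\rho(x)\lesssim(1+x)^{-\gamma}$ for some $\gamma>1/2$. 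It therefore suffices to produce $V=O(1/(1+x))$ in $C^\infty[0,\infty)$ so that, for each $j$, the solution at $E=E_j$ with $\theta(0)$ determined by $\theta_j$ has $\rho$ decaying at a power rate strictly above $1/2$.

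\textbf{The ansatz.} Using the Fourier expansion $p(x,E_j)^2=\sum_{m\in\Z}\hat p_m(E_j)e^{2\pi imx}$ of the $1$-periodic square of the Bloch amplitude, put $V=\sum_{j=1}^N V_j$, where
\[
V_j(x)=\frac{\beta_j}{1+x}\,\Im\Bigl(e^{i\eta_j}\sum_{m\in\Z}\overline{\hat p_m(E_j)}\,e^{-i(2k(E_j)+2\pi m)x}\Bigr)
\]
is a real $C^\infty$ function of size $O(1/(1+x))$ whose Fourier modes in $x$ all lie in $\pm 2k(E_j)+2\pi\Z$, none of which vanishes because $E_j$ is interior to its band. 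Substituting $V$ into the $\rho$-equation at $E=E_j$, the only contribution without an oscillating phase comes from the self-interaction $V_j\cdot\Im(e^{2i\theta}\varphi(\cdot,E_j)^2)$, and within it only from the pairing of opposite modes: its $x$-average is a nonzero real multiple of $\beta_j\cos(\eta_j+2\theta(x))\,\|p(\cdot,E_j)\|_{L^2[0,1]}^2/W_0(E_j)$, while the remaining piece oscillates at frequency $\pm 4k(E_j)$, which is nonzero precisely because the hypothesis $k(E_i)+k(E_j)\neq\pi$, used with $i=j$, forbids $k(E_j)=\pi/2$. Thus, up to oscillatory errors, $(\ln\rho_j)'(x)\approx -c_j\,\beta_j\cos(\eta_j+2\theta_j(x))/(1+x)$ with $c_j>0$. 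As in the classical Wigner--von Neumann mechanism, for each $E_j$ and each fixed large $\beta_j$ there is a one-dimensional space of solutions along which $\rho_j$ decays at a power rate $\gamma_j$ proportional to $\beta_j$ (all other solutions grow at a comparable rate), and the boundary parameter $\theta_j$ of the decaying solution is a continuous function of $\eta_j$ that sweeps out a full period as $\eta_j$ does. We take all $\beta_j$ large so that every $\gamma_j>1/2$, and realize the prescribed $(\theta_1,\dots,\theta_N)$ by choosing $(\eta_1,\dots,\eta_N)$: the map $(\eta_j)\mapsto(\theta_j)$ is a small perturbation of the decoupled one and hence surjective onto $[0,\pi]^N$ by a continuity/degree argument.

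\textbf{The cross terms --- the main obstacle.} What makes the previous paragraph work is that all discarded terms contribute only $O(1)$ to each $\ln\rho_j$ and only $O(1)$ to the drift of each $\theta_j$ (this is also what makes the $(\eta_j)\mapsto(\theta_j)$ map a small perturbation of the decoupled one). After expanding $p(\cdot,E_i)^2$, $p(\cdot,E_j)^2$ and $|p(\cdot,E_j)|^2$ in Fourier series, these terms are finite sums of functions $\frac{c}{1+t}\,e^{i\lambda t}$ with $\lambda$ ranging over $\{\pm 2k(E_i)+2\pi m,\ \pm 2k(E_i)\pm 2k(E_j)+2\pi m:m\in\Z\}$ for $i\neq j$, and $\pm 4k(E_j)+2\pi m$ from the diagonal remainder. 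The hypotheses that the $k(E_j)$ be pairwise distinct and that $k(E_i)+k(E_j)\neq\pi$ for all $i,j$, together with $k(E_j)\in(0,\pi)$, ensure that every such $\lambda$ keeps a positive distance from $2\pi\Z$, hence from $0$. One then needs the uniform estimate $\int_1^x\frac{e^{i\lambda t}}{1+t}\,dt=O(1)$ together with summability over $m$ (from the rapid decay of $\hat p_m(E)$ for smooth $p$), and one must absorb the slowly varying feedback of $\theta_j$ into these integrals; this is exactly the content of the estimates in Section \ref{Section:Prep}, where the uniform bound is obtained by pairing the positive and negative lobes of the decaying oscillations. I expect this simultaneous, uniform-in-$x$ control of all cross interactions and of the $\theta_j$-feedback to be the crux of the proof, and the point at which the periodic background --- through the Fourier expansion and the ensuing oscillatory sums --- genuinely complicates matters relative to the free case of \cite{jitomirskaya2018noncompact}.

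\textbf{Smoothness and conclusion.} The $V$ above is already $C^\infty$ on $[0,\infty)$ and $O(1/(1+x))$; any further small corrections needed to pin down the $\theta_j$ exactly and to keep all $N$ amplitudes simultaneously controlled can be introduced scale by scale on dyadic intervals and mollified across the junctions, since such localized modifications alter each $\ln\rho_j$ only by a controlled amount. Finally, Weyl's theorem gives $\sigma_{\mathrm{ess}}(H)=\sigma_{\mathrm{ess}}(H_0)$, and for each $j$ the solution of $Hu=E_ju$ with $u'(0)/u(0)=\tan\theta_j$ lies in $L^2(\R^+)$, so $E_j$ is an eigenvalue of $H$ with the prescribed boundary condition, which completes the proof.
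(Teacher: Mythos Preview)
Your approach is genuinely different from the paper's. The paper does not superpose $N$ Wigner--von Neumann blocks and then tune phases; instead it builds $V$ by \emph{concatenation}. The half-line is partitioned into successive intervals, and on each interval the potential is defined via the nonlinear ODE \eqref{Gnonlinear}--\eqref{GnonlinearV} so that the solution at one designated $E_j$---with boundary data already determined by the construction on the previous intervals---is forced to decrease by a large power (Lemma~\ref{thm2}), while the non-resonance estimates (Proposition~\ref{Fourier}, Lemma~\ref{lemmax2}) keep the other Pr\"ufer amplitudes essentially constant. Cycling repeatedly through $j=1,\ldots,N$ (Proposition~\ref{Twocase} and Section~\ref{Section:Construct}) yields simultaneous $L^2$ decay. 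Crucially, the prescribed boundary angles $\theta_j$ are the \emph{input} to the construction, not targets to be hit afterward, so no degree argument ever arises. This is also what allows the method to extend verbatim to countably many $E_j$ (Theorem~\ref{mainthm3}), where a superposition $\sum_j V_j$ would no longer be $O(1/(1+x))$.

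Your outline has two real gaps. First, the degree argument is not justified. You would need that for every $(\eta_1,\ldots,\eta_N)$ and each $j$ the equation $Hu=E_ju$ has a unique subordinate solution whose boundary angle depends continuously on all the $\eta_i$, and that the resulting map $(\R/2\pi\Z)^N\to(\R/\pi\Z)^N$ has nonzero degree. The claim that this map is a ``small perturbation of the decoupled one'' is not correct as stated: once every $\beta_i$ is taken large (as you require for $\gamma_i>1/2$), the cross terms in each $\theta_j'$-equation are of the same size $O(\beta_i/(1+x))$ as the diagonal self-interaction, so the off-diagonal influence on $\theta_j(0)$ is not small. A homotopy argument may still go through, but it must be written out, including the existence and stability of the subordinate solution under these perturbations. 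Second, you cannot invoke Section~\ref{Section:Prep} as a black box: Proposition~\ref{Fourier} is proved for $\theta$ solving the paper's specific equation \eqref{Gnonlinear}, in which $\theta'=\gamma'+O(1/x)$ carries the fast periodic phase, whereas your Pr\"ufer angle has $\theta'=O(1/x)$. The pairing-of-lobes technique should adapt (and is arguably easier in your setting), but the statements as written do not apply to your $\theta$.
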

\begin{corollary}\label{cor1}
Choose any band $[c_k,d_k]$. Let $e_k\in [c_k,d_k]$  be such that $k(e_k)=\frac{\pi}{2}$.
Suppose $\{ E_j\}_{j=1}^N$ are a finite set of distinct points in $(c_k,e_k)$ or $(e_k,d_k)$.
Then for any given $\{\theta_j\}_{j=1}^N\subset [0,\pi]$,
there exist  functions $V\in C^{\infty}[0,\infty)$ such that
\eqref{Ggoalb} holds as $x\to \infty$ and
\begin{equation*}
   Hu=E_ju
\end{equation*}
has an $L^2(\R^+)$ solution with boundary condition
\begin{equation*}
    \frac{u^\prime(0)}{u(0)}=\tan\theta_j.
\end{equation*}

\end{corollary}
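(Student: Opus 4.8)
The plan is to obtain Corollary \ref{cor1} as an immediate consequence of Theorem \ref{mainthm2}: all one has to do is check that the two hypotheses of that theorem — distinctness of the quasimomenta and the non-resonance condition $k(E_i)+k(E_j)\neq\pi$ — hold automatically once the energies are confined to a single half of the band. First I would recall the structure of the quasimomentum on the band $[c_k,d_k]$: the map $E\mapsto k(E)$ is a strictly monotone homeomorphism of $[c_k,d_k]$ onto $[0,\pi]$ (increasing in one orientation, decreasing in the other), with $k(e_k)=\pi/2$ by the definition of $e_k$. Consequently $(c_k,e_k)$ is carried bijectively onto one of the two open intervals $(0,\pi/2)$ or $(\pi/2,\pi)$, and $(e_k,d_k)$ onto the other.

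Given this, strict monotonicity shows at once that distinct points $E_1,\dots,E_N$ have distinct quasimomenta $k(E_1),\dots,k(E_N)$, so the first hypothesis of Theorem \ref{mainthm2} is satisfied. For the non-resonance condition, assume the $E_j$ all lie in $(c_k,e_k)$ (the case $(e_k,d_k)$ being identical). Then either all the $k(E_j)$ lie in $(0,\pi/2)$, whence $k(E_i)+k(E_j)<\pi$ for every pair $i,j$; or all the $k(E_j)$ lie in $(\pi/2,\pi)$, whence $k(E_i)+k(E_j)>\pi$ for every pair $i,j$. In both cases $k(E_i)+k(E_j)\neq\pi$, so the second hypothesis holds as well. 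Theorem \ref{mainthm2} then supplies the smooth perturbation $V$ with $V(x)=O(1)/(1+x)$ and the required $L^2(\R^+)$ solutions with boundary condition $u'(0)/u(0)=\tan\theta_j$, which is exactly the assertion of the corollary.

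There is no real obstacle here; the entire mathematical content is already packaged in Theorem \ref{mainthm2}, and the corollary merely records the observation that restricting to half a band forces the sum of any two quasimomenta to stay strictly on one side of $\pi$. The only point that deserves a line of care is that the quasimomentum may be increasing or decreasing on the band, but since in either orientation $(c_k,e_k)$ and $(e_k,d_k)$ are each mapped into one of $(0,\pi/2)$ and $(\pi/2,\pi)$, the argument is insensitive to this choice.
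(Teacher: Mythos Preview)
Your proposal is correct and matches the paper's intended approach: the corollary is stated without proof precisely because it is an immediate application of Theorem~\ref{mainthm2}, the only point being that on a single half-band the strict monotonicity of $E\mapsto k(E)$ forces all quasimomenta into one of $(0,\pi/2)$ or $(\pi/2,\pi)$, so both hypotheses of the theorem are automatic. Your handling of the two possible orientations of the quasimomentum is appropriate and there is nothing to add.
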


\begin{theorem}\label{mainthm3}
Suppose $A=\{ E_j\}_{j=1}^\infty\subset \cup_n (a_n,b_n)$  such that quasimomenta $\{k(E_j)\}_{j}$ are different.
Suppose for any $i,j $, $k(E_i)+k(E_j)\neq \pi$.
Let $h(x)>0$ be   any
function  on  $(0,\infty)$  with $  \lim_{x\to \infty}h(x) = \infty$.

Then for any given $\{\theta_j\}_{j=1}^\infty\subset [0,\pi]$, there exist  functions $V\in C^{\infty}[0,\infty]$ such that
\begin{equation}\label{Ggoala}
    |V(x)|\leq \frac{h(x)}{1+x}\quad \text{for } x>0,
\end{equation}
  and
\begin{equation*}
   Hu=E_ju
\end{equation*}
has an $L^2(\R^+)$ solution with boundary condition
\begin{equation*}
    \frac{u^\prime(0)}{u(0)}=\tan\theta_j.
\end{equation*}
\end{theorem}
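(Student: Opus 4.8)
The plan is to run the iterative construction behind Theorem~\ref{mainthm2} through infinitely many stages, ``switching on'' the energies $E_1,E_2,E_3,\dots$ one at a time along a sparse sequence of thresholds $0<x_1<x_2<\cdots\to\infty$ chosen in terms of $h$. The point is that at every $x$ only finitely many of the oscillatory components making up $V$ are active, so their total magnitude can be kept below $h(x)/(1+x)$; yet, because infinitely many construction stages always remain, each individual eigenfunction can still be driven into $L^2$.

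Concretely, recall that the construction of Theorem~\ref{mainthm2} builds $V$ stage by stage on a partition $[0,\infty)=\bigcup_m I_m$ into consecutive intervals, where on each stage $V$ is (roughly) a sum $\sum_j \frac{\beta_j(x)}{1+x}$ of bounded oscillatory terms, $\beta_j$ built from the generalized Pr\"ufer variables at energy $E_j$ and tuned so that the generalized Pr\"ufer radius $R_j$ of the solution $u_j$ of $Hu_j=E_ju_j$ with $u_j'(0)/u_j(0)=\tan\theta_j$ decays like a fixed power $x^{-\gamma}$, $\gamma>\tfrac12$. The estimates of Section~\ref{Section:Prep} show that the contribution of $\beta_i$ ($i\neq j$) to $\ln R_j$ is governed by a \emph{non-resonant} oscillatory integral --- this is exactly where $k(E_i)\neq k(E_j)$ and $k(E_i)+k(E_j)\neq\pi$ enter --- so those cross terms are summable and do not destroy the decay of $R_j$. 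For the countable set $A=\{E_j\}$ I would keep precisely this scheme, but allow only $E_1,\dots,E_{n(m)}$ to contribute to $V$ on the stage $I_m$, for a nondecreasing $n(m)\to\infty$ fixed below (so $V\equiv 0$ until the first threshold $x_1$). When $E_n$ is switched on at $x_n$, its radius $R_n$ has only accumulated a polynomially bounded value on $[0,x_n]$, and the same mechanism applied on the infinitely many remaining stages in $[x_n,\infty)$ pushes $R_n$ down to an $L^2$ tail; smoothness of $V$ is obtained by the same small smooth corrections used in Theorem~\ref{mainthm2}.

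The thresholds $x_n$ must be taken large enough to serve two purposes simultaneously. First, on the set where the active energies are $E_1,\dots,E_n$ one has $|V(x)|\le \frac{Cn}{1+x}$ with $C$ a constant bounding each $|\beta_j|$; since $h\to\infty$ we may choose $x_1<x_2<\cdots\to\infty$ with $\inf_{x\ge x_n}h(x)\ge Cn$, and let $n(m)$ be the largest $n$ with $x_n\le\inf I_m$. Then for $x\in[x_n,x_{n+1})$ at most $n$ components are active, so $|V(x)|\le\frac{Cn}{1+x}\le\frac{h(x)}{1+x}$, which is \eqref{Ggoala}. Second, the tail cross-interaction that $E_n$ exerts on a fixed earlier $R_j$ is, after integration by parts, of size $O\!\bigl(C_{n,j}/(1+x_n)\bigr)$ with $C_{n,j}$ depending only on the fixed non-resonance gap between $E_n$ and $E_j$; enlarging the $x_n$ further if necessary, we can also arrange $\sum_{n>j}C_{n,j}/(1+x_n)<\infty$ for every $j$, so that all these corrections together perturb $\ln R_j$ by only a bounded amount and the power-law decay survives. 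Both requirements are lower bounds on the $x_n$, hence compatible. Finally, for each fixed $j$ the solution $u_j$ is bounded on the compact interval $[0,x_j]$, and on $[x_j,\infty)$ the Pr\"ufer analysis gives $R_j(x)\le C_j(1+x)^{-\gamma}$ with $\gamma>\tfrac12$; hence $u_j\in L^2(\R^+)$, with the prescribed boundary condition built in.

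The hard part is the uniform simultaneous control: one must check that switching on each new oscillatory component never spoils the already-achieved $L^2$ decay of any earlier eigenfunction, with bounds uniform enough to survive summation, while still keeping $V$ inside $h(x)/(1+x)$. This is precisely the ``fragility of point spectrum'' obstacle emphasized in the introduction; I expect it to be resolved by combining (i) the non-resonance hypotheses, which make every cross interaction non-resonant and therefore decaying, (ii) the delicate estimates of Section~\ref{Section:Prep} controlling those oscillatory integrals, and (iii) the $h$-adapted, sufficiently sparse activation schedule, which both keeps $V$ small and makes the cross-interaction series converge. Since every stagewise choice depends only on the Pr\"ufer data inherited from earlier stages, the recursion produces a single well-defined $V\in C^\infty[0,\infty)$ with all the required properties.
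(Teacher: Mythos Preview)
Your outline differs from the paper's construction in a structural way that is worth flagging. You describe the potential on each stage as a \emph{superposition}
\[
V(x)\;\approx\;\sum_{j\le n(m)}\frac{\beta_j(x)}{1+x},
\]
with one oscillatory component per active energy, all present simultaneously. The paper instead uses \emph{concatenation} (time-sharing): each stage $[J_w,J_{w+1}]$ is subdivided into $N(w{+}1)$ consecutive subintervals, and on the $t$-th subinterval $V$ is the potential of Proposition~\ref{Twocase} attached to the \emph{single} energy $E_{t+1}$. On that subinterval $R(\cdot,E_{t+1})$ drops by a factor $\bigl((x_1-b)/(x_0-b)\bigr)^{-100}$ while each other $R(\cdot,E_j)$ grows by at most the fixed factor~$2$ (this is exactly~\eqref{thm142}--\eqref{thm144}). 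After cycling once through all $N(w{+}1)$ energies one obtains~\eqref{eigenj}, and the resulting size bound~\eqref{controlkr} reads $|V(x)|\le M\,N(w)C_w^2/(1+x)$; choosing $N(w)\to\infty$ slowly enough that $N(w)C_w^2\le \tfrac{1}{100}\min_{[J_{w-1},J_w]}h$ then yields~\eqref{Ggoala}. So in the paper there is never more than one oscillatory component active at a time.

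Your superposition scheme is in the spirit of Naboko/KRS-type constructions and is plausibly workable, but two points would need to be filled in. First, the estimates of Section~\ref{Section:Prep} (Proposition~\ref{Fourier}, Lemmas~\ref{thm2}--\ref{lemmax2}) are proved for the specific $V$ of~\eqref{GnonlinearV} attached to a single energy; in your setting every $\theta_j$ solves the nonlinear equation driven by the full sum $\sum_i\beta_i$, so $\theta_j'=\gamma_j'+O(n/(1+x))$ rather than $O(1/(1+x))$, and the hidden constants in the oscillatory-integral bounds acquire this $n$-dependence, which must then be absorbed into the choice of the thresholds $x_n$. Second, you only discuss the effect of the newly switched-on $\beta_n$ on earlier $R_j$; you also need to control the mutual interactions among $\beta_1,\dots,\beta_{n-1}$ (finitely many for each fixed $j$, so ultimately harmless, but this should be said). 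The paper's concatenation avoids both issues cleanly: on each subinterval only one component is present, so Proposition~\ref{Twocase} applies as a black box and the bookkeeping reduces to multiplying factors of $2$ and inverse powers of $C_w$.
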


\begin{corollary}\label{coro3}
Choose any band $[c_k,d_k]$. Let $e_k\in [c_k,d_k]$  be such that $k(e_k)=\frac{\pi}{2}$. Suppose $\{ E_j\}_{j=1}^{\infty}$ are a countable set of distinct points in $(c_k,e_k)$ or $(e_k,d_k)$.
Let $h(x)>0$ be   any
function  on  $(0,\infty)$  with $  \lim_{x\to \infty}h(x) = \infty$.

Then for any given $\{\theta_j\}_{j=1}^\infty\subset [0,\pi]$, there exist  functions $V\in C^{\infty}[0,\infty]$ such that \eqref{Ggoala} holds
  and
\begin{equation*}
   Hu=E_ju
\end{equation*}
has an $L^2(\R^+)$ solution with boundary condition
\begin{equation*}
    \frac{u^\prime(0)}{u(0)}=\tan\theta_j.
\end{equation*}

\end{corollary}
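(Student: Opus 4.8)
\textbf{Proof proposal for Corollary \ref{coro3}.}
The plan is to obtain Corollary \ref{coro3} as an immediate consequence of Theorem \ref{mainthm3}, by checking that the hypotheses of that theorem are automatically satisfied for any countable set of distinct energies lying inside a single half-band. The only nontrivial input demanded by Theorem \ref{mainthm3} is the pair of conditions that the quasimomenta $\{k(E_j)\}_j$ be pairwise distinct and that $k(E_i)+k(E_j)\neq\pi$ for all $i,j$; everything else in the conclusion (smoothness of $V$, the bound \eqref{Ggoala}, and the prescribed boundary phases $\theta_j$) is inherited verbatim from Theorem \ref{mainthm3}.

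First I would invoke the structural fact recorded in Section \ref{Section:Main}: on the band $[c_k,d_k]$ the quasimomentum map $E\mapsto k(E)$ is strictly monotone, carrying $[c_k,d_k]$ bijectively onto $[0,\pi]$. In particular $k$ is injective on $[c_k,d_k]$, so distinct energies $E_j$ automatically have distinct quasimomenta $k(E_j)$. Moreover, since $e_k$ is the unique point of the band with $k(e_k)=\pi/2$, strict monotonicity forces $k$ to map $(c_k,e_k)$ onto one of the open intervals $(0,\pi/2)$, $(\pi/2,\pi)$, and $(e_k,d_k)$ onto the other. Hence, whichever of the two half-bands contains $\{E_j\}_{j=1}^\infty$, all of the quasimomenta $k(E_j)$ lie in a common open interval of length $\pi/2$, either $(0,\pi/2)$ or $(\pi/2,\pi)$.

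Next I would observe that confinement of the quasimomenta to such an interval forces the non-resonance condition: if $k(E_i),k(E_j)\in(0,\pi/2)$ then $k(E_i)+k(E_j)<\pi$, while if $k(E_i),k(E_j)\in(\pi/2,\pi)$ then $k(E_i)+k(E_j)>\pi$; in either case — and in particular in the diagonal case $i=j$, where the condition reads $2k(E_i)\neq\pi$, i.e. $E_i\neq e_k$, which holds since $E_i\in(c_k,e_k)\cup(e_k,d_k)$ — the sum is never exactly $\pi$. Thus the set $A=\{E_j\}_{j=1}^\infty$ meets every hypothesis of Theorem \ref{mainthm3}, and applying that theorem with the given $h$ and the given phases $\{\theta_j\}_{j=1}^\infty$ produces a $V\in C^\infty$ with the required properties.

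I do not expect any genuine obstacle in this argument: the corollary is purely a bookkeeping reduction to Theorem \ref{mainthm3}, and the one point requiring (elementary) care is the claim that a half-band of energies corresponds to quasimomenta confined to either $(0,\pi/2)$ or $(\pi/2,\pi)$, which is immediate from the strict monotonicity of $k(E)$ on $[c_k,d_k]$. All the real difficulty lies upstream, in the proof of Theorem \ref{mainthm3} (and hence in the estimates of Section \ref{Section:Prep} and the construction of Section \ref{Section:Construct}); the corollary itself adds nothing beyond this observation.
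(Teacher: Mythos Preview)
Your proposal is correct and matches the paper's (implicit) approach: the paper does not spell out a separate proof of Corollary~\ref{coro3}, treating it as an immediate specialization of Theorem~\ref{mainthm3} once one notes that strict monotonicity of $E\mapsto k(E)$ on a band forces the quasimomenta of a half-band to lie entirely in $(0,\pi/2)$ or entirely in $(\pi/2,\pi)$, which automatically yields both the distinctness of the $k(E_j)$ and the non-resonance condition $k(E_i)+k(E_j)\neq\pi$. There is nothing to add.
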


\begin{remark}\ \\
\begin{enumerate}[(i)]
%\item Actually, our construction shows that  the interval in Corollaries \ref{cor1} and \ref{coro3} can be $(a_n,c_n]$ or $[c_n,d_n)$.
  \item Actually, in the  proof of Theorems \ref{mainthm2} and \ref{mainthm3}, we show that
\begin{equation*}
    V^{(k)}(x)=\frac{O_k(1)}{1+x}
\end{equation*}
and
\begin{equation*}
    |V^{(k)}(x)|\leq O_k(1)\frac{h(x)}{1+x}, \quad x>0
\end{equation*}
respectively, where $O_k(1)$ is a large constant depending on $k$.
  \item  Although we  only consider the half line $[0,\infty)$, all the results in this paper hold for $x\in(-\infty,0]$.
  \item  We can assume   $V(x)$ we constructed  in Theorems  \ref{mainthm2}, \ref{mainthm3} and Corollaries  \ref{cor1}, \ref{coro3} satisfies
  \begin{equation*}
    |V(x)|\leq \frac{C}{(1+|x|)^{\frac{2}{3}}}.
  \end{equation*}
  Thus $\sigma_{ac}(H)=\sigma_{ac}(H_0)=\cup_k [c_k,d_k]$ \cite{acmk}.
\end{enumerate}

\end{remark}
Now we are in the position to introduce the results for perturbed periodic Jacobi operators.
 Recalling the equation \eqref{Jeq:JacobiUnperturbed} we denote
\begin{equation*}
  \sigma_{\rm ac}(J_0) = \sigma_{\rm ess}(J_0)=\bigcup_{k}[c_k,d_k].
\end{equation*}
Let $E\in (c_k,d_k)$ and $\varphi$  be the Floquet solution of $q$-periodic operator.
Suppose
\begin{equation}\label{JGflo}
 \varphi(n,E)= p(n) e^{i  \frac{k(E)}{q}n},
 \end{equation}
 where $p(n)$ is a real $q$-periodic function and $k(E)\in(0,\pi)$ is called the quasimomentum ($q$ is the period for $a_n,b_n$).
 Sometimes, we omit the dependence on $E$.

  \begin{theorem}\label{Jthm:NoEmbedde}

Suppose $a_n^{\prime}=\frac{o(1)}{1+n} $ and  $b_n^{\prime}=\frac{o(1)}{1+n} $.
 Let $J$ be given by \eqref{Jeq:JacobiPerturbed}.  Then there exists no non-trivial $\ell^2(\Z_{\geq 0})$ solution of $Ju=Eu$ for any $E\in \cup_k (c_k,d_k)$.
 \end{theorem}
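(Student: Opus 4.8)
The plan is to run, in the discrete setting, the same scheme that proves the continuous Theorem~\ref{mainthm1}, replacing the Pr\"ufer transformation of the free operator by the generalized Pr\"ufer variables attached to the Floquet solutions of the unperturbed $q$-periodic operator $J_0$.

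First I would fix $E\in(c_k,d_k)$ and diagonalize the monodromy. Since $E$ lies in the open band, the product $T_q(E)$ of the unperturbed one-step transfer matrices over a full period has two distinct eigenvalues $e^{\pm i k(E)}$ on the unit circle, with $k(E)\in(0,\pi)$, so $T_q(E)=S(E)\,\mathrm{diag}(e^{ik(E)},e^{-ik(E)})\,S(E)^{-1}$ with $S(E)$ and $S(E)^{-1}$ bounded; this is the only place where being strictly inside a band is essential, since at $E=c_k$ or $E=d_k$ the two Floquet solutions coalesce and $S(E)$ degenerates. Given a solution $u$ of $Ju=Eu$, I would sample the solution vector over periods, $w_m=S(E)^{-1}(u(qm),u(qm-1))^{\mathsf T}$, and pass to polar coordinates $w_m=R_m(\cos\Theta_m,\sin\Theta_m)^{\mathsf T}$ --- the generalized Pr\"ufer amplitude and angle. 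Because $S(E)$ is invertible with norm controlled by $E$ and the periodic data, and the intra-period transfer matrices are bounded, $R_m^2$ is comparable, uniformly in $m$, to $|u(qm)|^2+|u(qm-1)|^2$, and $R_m$ never vanishes for a nontrivial $u$.

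Next I would derive the amplitude recursion. Writing $J=J_0+J'$ so that $J'$ carries the coefficients $a_n',b_n'=o(1)/(1+n)$, set $\varepsilon_n=\sup_{m\ge n}(1+m)(|a_m'|+|b_m'|)$, so $\varepsilon_n\to0$. Variation of parameters over one period gives
\[
w_{m+1}=\mathrm{diag}(e^{ik(E)},e^{-ik(E)})\,w_m+\mathcal{E}_m w_m,\qquad \norm{\mathcal{E}_m}\le \frac{C(E)\,\varepsilon_{qm}}{1+qm},
\]
since each correction term is a bounded periodic function of the Floquet data times $a_n'$ or $b_n'$. As $\mathrm{diag}(e^{ik(E)},e^{-ik(E)})$ is unitary, $R_{m+1}^2=\bigl(1+O(\varepsilon_{qm}/(1+qm))\bigr)R_m^2$, and so
\[
\bigl|\ln R_m^2-\ln R_0^2\bigr|\le \sum_{\ell<m}\frac{C(E)\,\varepsilon_{q\ell}}{1+q\ell}+O(1).
\]
Because $\varepsilon_n\to0$, an elementary Kronecker/Ces\`aro estimate shows the right-hand side is $o(\ln m)$; thus for every $\delta>0$ one has $R_m^2\ge m^{-\delta}$ for all large $m$.

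Finally I would conclude: choosing $\delta<1$,
\[
\sum_m\bigl(|u(qm)|^2+|u(qm-1)|^2\bigr)\ge c(E)\sum_m R_m^2\ge c(E)\sum_{m\ \mathrm{large}} m^{-\delta}=\infty,
\]
which forces $\sum_n|u(n)|^2=\infty$, contradicting $u\in\ell^2(\Z_{\ge0})$ unless $u\equiv0$. The main obstacle --- the only step that genuinely differs from the continuous proof --- is the first one: the discrete generalized Pr\"ufer transformation is singular in general, so it must be built through the period-$q$ monodromy, using that $E$ is strictly interior to a band to keep $S(E)$ and $C(E)$ finite. Everything else is soft; in particular one never needs to control the angle $\Theta_m$ or exploit oscillatory cancellation, because the hypothesis $o(1)/(1+n)$ already forces the accumulated error to be $o(\ln m)$. (The detailed computation is carried out in the Appendix.)
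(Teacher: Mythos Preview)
Your argument is correct and is essentially the paper's own proof: both set up a generalized Pr\"ufer amplitude tied to the Floquet solutions, use $a_n',b_n'=o(1)/(1+n)$ to bound $|\ln R_{m+1}-\ln R_m|$ by a summand whose partial sums are $o(\ln m)$, and conclude $R_m\ge m^{-\delta}$ for every $\delta>0$, contradicting $u\in\ell^2$. The only difference is cosmetic: the paper quotes the step-by-step recursion of \cite{lukdcmv} (Theorem~\ref{Jt.OPRL} together with Proposition~\ref{JPboundUR}) to obtain $R(n+1)/R(n)=1+o(1)/n$ at every $n$, whereas you sample at multiples of the period via monodromy diagonalization --- slightly more self-contained, but the same estimate. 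Two minor slips worth fixing: once you complex-diagonalize $T_q$ to $\mathrm{diag}(e^{ik},e^{-ik})$ the vector $w_m=S^{-1}(u(qm),u(qm-1))^{\mathsf T}$ is complex, so the real polar form $R_m(\cos\Theta_m,\sin\Theta_m)^{\mathsf T}$ is inconsistent (either conjugate $T_q$ over $\mathbb{R}$ to a rotation and keep real polar coordinates, or write $w_m=(z_m,\bar z_m)^{\mathsf T}$ with $R_m=|z_m|$; the estimate is unaffected either way); and your closing parenthetical about the Appendix is misplaced --- the Appendix treats the constructive Theorems~\ref{Jthm:FiniteEmbedded} and~\ref{Jthm:InfiniteEmbedded}, not this result.
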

  \begin{theorem}\label{Jthm:FiniteEmbedded}
Suppose $\{ E_j\}_{j=1}^N\subset \cup_k (c_k,d_k)$  such that quasimomenta $\{k(E_j)\}_{j=1}^N$ are different.
Suppose for any $i,j\in\{1,2,\cdots,N\} $, $k(E_i)+k(E_j)\neq \pi$. Let $a_n^{\prime}=0$.
Then for any given $\{\theta_j\}_{j=1}^N\subset [0,\pi]$,
there exist   $b_n^{\prime}$ such that
\begin{equation}\label{JGgoalb}
    b_n^{\prime}=\frac{O(1)}{1+n}
\end{equation}
as $n\to \infty$ and the
\begin{equation*}
   Ju=E_ju
\end{equation*}
has an $\ell^2(\Z_{\geq 0})$ solution with boundary condition
\begin{equation*}
    \frac{u(1)}{u(0)}=\tan\theta_j.
\end{equation*}

 \end{theorem}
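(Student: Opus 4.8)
The natural route is to run, for the discrete operator, the argument behind Theorem \ref{mainthm2}, using the generalized Pr\"ufer variables for the $q$-periodic Jacobi operator $J_0$ set up in Section \ref{sec:Jacobi1}. For each target energy $E_j$ one writes a solution $u$ of $Ju=E_ju$ through a Pr\"ufer radius $R_j(n)$ and angle $\vartheta_j(n)$ adapted to the Floquet solution $\varphi(\cdot,E_j)=p(\cdot)e^{ik(E_j)n/q}$ of \eqref{JGflo}. Since $a_n'=0$, one step of this evolution is driven linearly by $b_n'$, with coefficients that are finite trigonometric polynomials in $\vartheta_j(n)$ and in the Floquet phase $k(E_j)n/q$ (together with the $q$-periodic data coming from $p$); moreover $u\in\ell^2(\Z_{\geq 0})$ is equivalent to $\sum_n R_j(n)^2<\infty$, for which $R_j(n)=O(n^{-1/2-\delta})$ with some $\delta>0$ suffices.

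The perturbation will be a superposition, organized along a sequence of consecutive blocks of $\Z_{\geq 0}$, of Wigner--von Neumann type terms $\beta_j(n)=\frac{c_j}{1+n}F_j(n)$, one resonant with the $E_j$-dynamics: $F_j$ is chosen so that, after substitution, the slowly varying (resonant) part of the increment of $\ln R_j$ has a definite sign, forcing $\ln R_j(n)\sim-\gamma_j\ln n$ with $\gamma_j=\gamma_j(c_j)>1/2$, while a phase parameter $\eta_j$ sitting in $F_j$ is left free to steer the asymptotic Pr\"ufer angle, hence the effective boundary ratio $u(1)/u(0)$, to $\tan\theta_j$. The crux is then to bound the \emph{cross terms}: the contribution of each $\beta_i$, $i\neq j$, to the increments of $\ln R_j$ and $\vartheta_j$. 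These are oscillatory sums of the shape $\sum_n\frac1n e^{i(\pm k(E_i)\pm k(E_j))n/q}e^{im\vartheta_j(n)}(\cdots)$, and this is where the hypotheses enter: $k(E_i)\neq k(E_j)$ and $k(E_i)+k(E_j)\neq\pi$ ensure that all the resonant combinations of quasimomenta that could occur are nonzero modulo $2\pi$, so no relevant phase is stationary and summation by parts (the Fourier-expansion and Dirichlet-kernel estimates of Section \ref{Section:Prep}) shows every cross term is bounded uniformly in $n$. Hence the cross terms alter $R_j$ and $\vartheta_j$ only by controllable bounded amounts, destroying neither the power decay nor the boundary condition. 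A continuity/degree argument in the finitely many parameters $(\eta_1,\dots,\eta_N)$, exactly as in Section \ref{Section:Construct}, then produces a single choice realizing all $N$ boundary conditions simultaneously, and $|b_n'|\leq\sum_{j}\frac{|c_j|}{1+n}=\frac{O(1)}{1+n}$ gives \eqref{JGgoalb}.

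The main obstacle is precisely this bookkeeping on the Pr\"ufer increments, and it is genuinely harder than in the continuous case for the two reasons flagged in the introduction. First, the generalized Pr\"ufer transformation is singular in the discrete setting, so the increment formulas for $(\ln R_j,\vartheta_j)$ must be derived and estimated with care (this is carried out in Section \ref{sec:Jacobi2} and, in full detail, in the Appendix). Second, several quantities that are outright bounded in the continuous argument here only admit a bound growing like $\varepsilon\ln n$ for arbitrarily small $\varepsilon>0$; this slack must be absorbed without spoiling $\gamma_j>1/2$, which is the source of the extra parameter appearing in the discrete construction. Once these estimates are in place, the $\ell^2$ property, the matching of the prescribed boundary conditions, and the bound \eqref{JGgoalb} follow along the same lines as in Theorem \ref{mainthm2}.
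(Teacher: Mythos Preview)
Your broad strategy---discrete Pr\"ufer variables, a block-by-block construction, and cross-term estimates exploiting the non-resonance hypotheses---matches the paper. But two points in your sketch diverge from what the paper actually does, and one of them is a genuine gap.

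\textbf{Boundary conditions.} You propose leaving phase parameters $(\eta_1,\dots,\eta_N)$ free and then invoking ``a continuity/degree argument \dots\ exactly as in Section~\ref{Section:Construct}'' to hit the prescribed $u(1)/u(0)=\tan\theta_j$. Section~\ref{Section:Construct} contains no such argument, and none is needed. The paper sets $b'_n=0$ for small $n$, defines $u(\cdot,E_j)$ from the outset as \emph{the} solution of $Ju=E_ju$ with $u(1)/u(0)=\tan\theta_j$, and then builds $b'$ forward block by block, at each step choosing the initial phase $\theta_0$ in Proposition~\ref{JTwocase} to match the \emph{current} Pr\"ufer angle of the energy being targeted. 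The boundary condition is thus automatic; what must be proved is only $\ell^2$-decay. Your shooting-from-infinity picture, where one tunes asymptotic phases to land on the right boundary ratio, is a different and harder mechanism, and you have not explained why such a degree argument would succeed here.

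\textbf{Superposition versus concatenation.} Your description of $b'$ as a ``superposition'' $\sum_j \beta_j$ of simultaneous Wigner--von Neumann terms is not the paper's construction. The paper \emph{concatenates}: on each block $(J_w+tT_{w+1},\,J_w+(t+1)T_{w+1}]$ the perturbation is built solely from the Pr\"ufer angle of a \emph{single} energy $E_{t+1}$ via \eqref{JGcons2}, and the other energies are passive. This is precisely what allows the cross-term control (Lemma~\ref{JLcon1}, Proposition~\ref{JTwocase}) under the mere non-resonance condition $k(E_i)+k(E_j)\neq\pi$, whereas a genuine superposition (as in \cite{KRS}) typically needs rational independence of the quasimomenta. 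Relatedly, you first assert the cross terms are ``bounded uniformly in $n$'' and later concede they grow like $\varepsilon\ln n$; the latter is correct (and is exactly \eqref{JGcons5}), and the construction has to absorb this via the block scales and the parameter $\varepsilon_w=1/(100N(w))$, as in the Appendix.
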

 \begin{theorem}\label{Jthm:InfiniteEmbedded}
 Suppose $\{ E_j\}_{j=1}^\infty\subset \cup_k (c_k,d_k)$  such that quasimomenta $\{k(E_j)\}_{j}$ are different.
Suppose for any $i,j $, $k(E_i)+k(E_j)\neq \pi$.
Let $h(n)>0$ be   any
function  on  $\Z_{\geq 0}$  with $  \lim_{n\to \infty}h(n) = \infty$. Let $a_n^\prime=0$

Then for any given $\{\theta_j\}_{j=1}^\infty\subset [0,\pi]$, there exist  sequence $b_n^\prime$ such that
\begin{equation}\label{JGgoala}
    |b^\prime(n)|\leq \frac{h(n)}{1+n}\quad \text{for } n,
\end{equation}
  and
\begin{equation*}
   Ju=E_ju
\end{equation*}
has an $\ell^2(\Z_{\geq 0})$ solution with boundary condition
\begin{equation*}
    \frac{u(1)}{u(0)}=\tan\theta_j.
\end{equation*}
 \end{theorem}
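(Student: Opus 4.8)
The plan is to follow the iterative scheme used for Theorem \ref{mainthm3} in Sections \ref{Section:Prep}--\ref{Section:Construct}, transported to the Jacobi setting through the discrete generalized Pr\"ufer variables developed in Section \ref{sec:Jacobi1}, while carefully tracking the extra logarithmic losses that, as advertised in the introduction, are peculiar to the discrete case. First, for each target energy $E=E_j$ I would pass from $Ju=E_ju$ to generalized Pr\"ufer variables $(R_n(E_j),\theta_n(E_j))$ adapted to the Floquet solution \eqref{JGflo}; the point established in Section \ref{sec:Jacobi1} is that $Ju=E_ju$ has an $\ell^2(\mathbb Z_{\ge 0})$ solution exactly when $R_n(E_j)\to 0$, and the boundary condition $u(1)/u(0)=\tan\theta_j$ is equivalent to prescribing the initial Pr\"ufer angle, which can be installed at the end since the construction runs ``from infinity.'' Writing the solution through these variables, the recursion for $\log R_n(E)$ takes the schematic form
\begin{equation*}
  \log R_{n+1}(E)-\log R_n(E)=b'_{n+1}\, g\bigl(\theta_n(E),n,E\bigr)+O\bigl((b'_{n+1})^2\bigr),
\end{equation*}
where $g$ is an explicit trigonometric polynomial in $\theta_n$ with coefficients built from the periodic Floquet data $p(n,E)$.

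Next I would take the perturbation to be a superposition of decaying resonant terms, one per energy. Fix rapidly increasing scales $L_1<L_2<\cdots$ and set
\begin{equation*}
  b'_{n+1}=\sum_{j\,:\,L_j\le n}\frac{c_j(n)}{1+n}\, f_j\bigl(\theta_n(E_j),n\bigr),
\end{equation*}
where $f_j$ is chosen to resonate with $\log R_n(E_j)$, so that after averaging the $E_j$-term contributes a definite negative drift $-\kappa_j c_j(n)/(1+n)$ to $\log R_n(E_j)$, and $c_j(n)\ge 0$ is a slowly varying amplitude. On the block $[L_k,L_{k+1})$ only the first $k$ terms are switched on, so finitely many terms are active at any $n$. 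The amplitudes are chosen, depending on $h$, so that $\sum_j c_j(n)\le h(n)$ for every $n$; this is possible precisely because $h(n)\to\infty$, since on the block where the $j$-th term switches on we only need $h$ to have grown past a threshold determined by $c_1,\dots,c_j$. Because the quasimomenta of a general countable set may cluster (so the separation constants between $k(E_i)$ and $k(E_j)$, and between $k(E_i)+k(E_j)$ and $\pi$, degrade), the amplitude needed for the $E_j$-drift to beat the accumulated damage from $E_1,\dots,E_{j-1}$ may have to grow with $j$; this is exactly why only the bound \eqref{JGgoala} with $h\to\infty$ is claimed rather than a uniform $O(1/(1+n))$.

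The heart of the argument --- the discrete analogue of Section \ref{Section:Prep} --- is to show that for $i\ne j$ the $E_j$-term perturbs $\log R_n(E_i)$ by only a controlled amount. Expanding the cross term
\begin{equation*}
  \sum_{t\le n}\frac{c_j(t)}{1+t}\, f_j\bigl(\theta_t(E_j),t\bigr)\, g\bigl(\theta_t(E_i),t,E_i\bigr)
\end{equation*}
into Fourier modes in $t$, the distinctness of the $k(E_\ell)$ together with $k(E_i)+k(E_j)\ne\pi$ guarantees that every mode that appears is non-stationary, so an Abel-summation estimate bounds the contribution. In the continuous case this bound is $O(1)$; in the Jacobi case the best available bound is of the form $\varepsilon_k\log n$ on block $k$, because discrete oscillatory sums lose a logarithm relative to their continuous counterparts. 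This is the role of the additional parameter: by shrinking amplitudes and letting $\varepsilon_k\to 0$ and $L_k\to\infty$ fast enough, the total cross-damage to $\log R_n(E_i)$ accumulated over all blocks $k\ge i$ is kept $o(\log n)$, hence dominated by the genuine drift $-\kappa_i c_i(\cdot)\log n$ from the $E_i$-term. One also checks the $O((b')^2)$ errors are summable --- automatic once we note that replacing $h$ by $\min(h(n),n^{1/3})$ (still $\to\infty$) only strengthens the conclusion, so we may assume $b'_n=O(n^{-2/3})$ --- and that $a'_n\equiv 0$ is consistent throughout, which it is since only the diagonal is modified.

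The main obstacle is this last step: establishing the $\varepsilon\log n$ (rather than merely $\log n$, or worse) bound on the cross terms in the discrete setting, and then balancing it against the drift. The singular nature of the discrete generalized Pr\"ufer transformation near the band edges and the extra logarithm in discrete oscillatory sums make the simultaneous bookkeeping of scales $L_k$, amplitudes $c_j$, and drift coefficients $\kappa_j$ considerably more delicate than in the continuous case, and this is where the bulk of the work in Section \ref{sec:Jacobi2} and the Appendix goes. Once these estimates are in place, the conclusion is as before: $R_n(E_j)\to 0$ for every $j$, so each $E_j$ is an eigenvalue with an $\ell^2(\mathbb Z_{\ge 0})$ eigenfunction, the prescribed boundary phase is obtained by fixing the initial Pr\"ufer angle, and \eqref{JGgoala} holds by construction.
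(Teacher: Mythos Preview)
Your proposal correctly identifies the key technical mechanism peculiar to the discrete case --- that the oscillatory cross-sums are bounded only up to $\varepsilon \ln n$ rather than $O(1)$ (this is Lemma~\ref{JLcon1}), and that this forces an extra parameter $\varepsilon_k\to 0$ into the bookkeeping. But the architecture you propose is genuinely different from the paper's. You build $b'$ as a \emph{superposition}, $b'_{n+1}=\sum_{j:L_j\le n}\frac{c_j(n)}{1+n}f_j(\theta_n(E_j),n)$, with all active resonant terms present simultaneously. The paper instead uses \emph{concatenation}: the half-line is partitioned into blocks $[J_w+tT_{w+1},\,J_w+(t+1)T_{w+1}]$, and on each block the perturbation is the \emph{single} term $b'_{n+1}=\frac{C}{n-v}\sin 2\theta(n,E_{t+1})$ tuned to one energy. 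Proposition~\ref{JTwocase} then gives power decay of $R(\cdot,E_{t+1})$ on that block and at most $(\text{ratio})^{\varepsilon_{w+1}}$ growth of the other $R(\cdot,E_j)$; cycling $t$ through $1,\dots,N(w+1)$ with $\varepsilon_w=1/(100N(w))$ makes the net effect a contraction for every $E_j$. The concatenation decouples the analysis: Lemma~\ref{JLcon1} is applied to one pair at a time under a single-term perturbation, so the phase error in \eqref{JLT2} stays $O(C/(n-v))$ with a fixed $C$. In your superposition the analogous error scales with the number of active terms, and the additive constants $D(E,A,\varepsilon)$ from Lemma~\ref{JLcon1} accumulate across the sum; this can likely be handled, but the sketch does not yet address it.

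Two further points. First, your claim that the construction ``runs from infinity'' with the boundary condition ``installed at the end'' is inconsistent with your own formula: since $f_j$ depends on $\theta_n(E_j)$, which is determined by the initial Pr\"ufer angle at $n=0$, your construction --- like the paper's --- necessarily runs forward from the origin with the prescribed $\theta_j$ built in from the start (see the initialization at \eqref{J1boundaryn}). Second, it is not true that $Ju=E_ju$ has an $\ell^2$ solution exactly when $R_n(E_j)\to 0$; Proposition~\ref{JPboundUR} gives $R(n)\asymp (u(n)^2+u(n-1)^2)^{1/2}$, so one needs $R(\cdot,E_j)\in\ell^2$. The paper obtains this by proving a power bound $R(n,E_j)\le T_{w_0}^{25}T_{w+1}^{-24}R(J_{w_0},E_j)$ on $[J_{w+1},J_{w+2}]$ and summing.
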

\liu{Finally, we remark that it is possible to make $O(1)$ in  \eqref{Ggoalb} and  \eqref{JGgoalb} quantitative \cite{liustark18}.
Also, under the assumption $V(x)=\frac{O(1)}{1+|x|}$, we can show that the singular continuous spectrum of $H_0+V$ is empty \cite{liusc2}. Similar results hold for the discrete cases \cite{liusc1}.}
%\begin{remark}
%The countable (finite) sequence $\{E_j\}$ given in Theorem \ref{mainthm3} (\ref{mainthm2}) is not necessary in a single band $[a_n,b_n]$.
%Actually, we only need    the quasimomentum $k(E_j)$ is different.
%\end{remark}
\section{Absence of embedded eigenvalues for small perturbations in the continuous setting}\label{Section:Small}
From Section \ref{Section:Small} through Section \ref{Section:Construct}, we only consider continuous Schr\"odinger operators.

Let $E\in \cup_n(a_n,b_n)$ and let $\varphi(x,E)$ be the Floquet solution of $H_0$.
We   recall the generalized Pr\"ufer transformation of Schr\"odinger equation $Hu=Eu$ first, which is from \cite{KRS}.

 By interchanging $\varphi$ and $\overline{\varphi}$, we can assume
 \begin{equation*}
    {\rm Im}(\overline{\varphi(0)}\varphi^\prime(0))>0.
 \end{equation*}

Define  $\gamma(x,E)$  as a continuous function such that
\begin{equation}\label{Gfl}
  \varphi(x,E) =|\varphi(x,E)|e^{i \gamma(x,E)}.
\end{equation}
In the following arguments, we leave the dependence on $E$ implicit if there is no confusion. Note that we define $u$ to be a real solution of \eqref{Gu} and $\varphi$ is a Floquet solution of \eqref{GV} (so $\varphi$ is complex-valued). We also assume the  quasimomentum $k(E)$ satisfies $0\leq k( E)\leq \pi$.

By \cite[Proposition 2.1]{KRS}, we know there exists some constant  $G >0$ (depending on $E$) such that
\begin{equation}\label{Ggammad}
   \frac{1}{G}\leq \gamma^\prime(x)\leq G.
\end{equation}

\begin{proposition}[ Proposition 2.2 and Theorem 2.3bc of \cite{KRS}] \label{thmformula}
Suppose $u$ is a real solution of \eqref{Gu}.
Then there exist real functions $ R(x)>0$ and $\theta(x)$ such that
\begin{equation}\label{GformulaR}
    [\ln R(x)]^\prime=\frac{V(x)}{2\gamma^\prime(x)} \sin2 \theta(x)
\end{equation}
and
\begin{equation}\label{Gformulatheta}
   \theta(x)^\prime=\gamma^\prime(x)-\frac{V(x)}{2\gamma^\prime(x)} \sin^2 \theta(x).
\end{equation}

Moreover,  there exists a constant $K$(depending on $E$) such that
\begin{equation}\label{Gformulanorm}
    \frac{|u(x)|^2+|u^\prime(x)|^2}{K} \leq R(x)^2\leq K (|u(x)|^2+|u^\prime(x)|^2).
\end{equation}

\end{proposition}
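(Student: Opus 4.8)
The plan is to establish this by a moving-frame (\emph{generalized Pr\"ufer}) change of variables adapted to the Floquet solution $\varphi$ in place of the free waves $e^{\pm i\sqrt{E}x}$. I first record the facts underlying \eqref{Ggammad}. Since $E$ lies in the interior of a band, the quasimomentum satisfies $k(E)\in(0,\pi)$, so $\varphi$ and $\overline\varphi$ are \emph{linearly independent} solutions of \eqref{GV}; hence the Wronskian $\overline\varphi\varphi'-\overline{\varphi'}\varphi$ is a nonzero constant, and after the normalization $\mathrm{Im}(\overline{\varphi(0)}\varphi'(0))>0$ the number $m:=\mathrm{Im}(\overline\varphi\varphi')$ is a positive constant. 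Writing $\varphi=|\varphi|e^{i\gamma}$ one computes $\overline\varphi\varphi'=|\varphi|\,|\varphi|'+i|\varphi|^{2}\gamma'$, so $m=|\varphi|^{2}\gamma'$; in particular $\varphi$ vanishes nowhere (otherwise the Wronskian would vanish), $|\varphi|=|p(x,E)|$ is a positive continuous $1$-periodic function, and $\gamma'=m/|\varphi|^{2}$ is positive and $1$-periodic, which is \eqref{Ggammad} (this is \cite[Proposition 2.1]{KRS}).

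Next I manufacture $R$ and $\theta$ from $u$. Assume $u\not\equiv0$ (the statement is used only for nontrivial solutions) and set $w(x):=\overline{\varphi(x)}\,u'(x)-\overline{\varphi'(x)}\,u(x)$. Differentiating and inserting $\varphi''=(V_0-E)\varphi$ and $u''=(V_0+V-E)u$, the $V_0$ terms cancel and one is left with $w'=V\overline\varphi\,u$. Because $\varphi,\overline\varphi$ are independent, $w(x_0)=0$ would force $(u,u')(x_0)=(0,0)$ and hence $u\equiv0$; so $w$ is nowhere zero and I may write $w=|w|\,e^{i\beta}$ with $|w|>0$ and $\beta$ continuous. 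Put $R:=|w|/m>0$ and $\theta:=\gamma+\beta$. Inverting the linear map $(u,u')\mapsto(w,\overline w)$ (its determinant $2im$ is nonzero) gives $u=\mathrm{Im}(\varphi w)/m=R|\varphi|\sin\theta$ and $u'=\mathrm{Im}(\varphi'w)/m=R\,\mathrm{Im}\!\big(\varphi'e^{i(\theta-\gamma)}\big)$, so $R$ and $\theta$ are genuine real functions built from $u$, and $w=Rm\,e^{i(\theta-\gamma)}$.

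Finally I extract the two differential equations and the norm bound. Taking the logarithmic derivative of $w$ and substituting $w'=V\overline\varphi u$, $u=R|\varphi|\sin\theta$, $w=Rm\,e^{i(\theta-\gamma)}$ and $|\varphi|^{2}\gamma'=m$ produces an explicit expression for $w'/w$; its real part is $(\ln|w|)'=(\ln R)'$ and its imaginary part is $\beta'=\theta'-\gamma'$, and these are exactly \eqref{GformulaR} and \eqref{Gformulatheta}. For \eqref{Gformulanorm}, the representation just obtained yields $|u|^{2}+|u'|^{2}=R^{2}F(x,\theta)$, where $F(x,\theta):=\mathrm{Im}(\varphi e^{i(\theta-\gamma)})^{2}+\mathrm{Im}(\varphi'e^{i(\theta-\gamma)})^{2}$ is continuous and periodic in $x$ (period $1$) and in $\theta$ (period $2\pi$); if $F(x,\theta)=0$ then $\varphi(x)$ and $\varphi'(x)$ are both real multiples of $e^{-i(\theta-\gamma)}$, forcing $\mathrm{Im}(\overline\varphi\varphi')=0$, contrary to $m>0$. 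Hence $F$ is bounded between two positive constants depending only on $E$, which is \eqref{Gformulanorm}. I expect the only genuinely delicate points to be the global non-degeneracy of the moving frame — that $\varphi$ is nowhere zero and $m\neq0$, which is precisely where $E$ being in the \emph{open} band (so $k(E)\neq0,\pi$) enters — together with this compactness argument for $F$; everything else is routine bookkeeping with the identity $\varphi''=(V_0-E)\varphi$.
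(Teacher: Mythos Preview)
The paper does not supply its own proof of this proposition; it is simply quoted from \cite{KRS}, with the explicit relations \eqref{Gformulare1}--\eqref{Gformulare3} recorded in the subsequent Remark. Your argument is precisely the Kiselev--Remling--Simon moving-frame computation: form the Wronskian-type quantity $w=\overline\varphi\,u'-\overline{\varphi'}\,u$, observe $w'=V\overline\varphi\,u$, write $w$ in polar form, and read off the amplitude and phase equations from the real and imaginary parts of $w'/w$; the two-sided bound then follows from nondegeneracy of the frame ($m=\mathrm{Im}(\overline\varphi\varphi')\neq0$) and compactness in the periodic variables. This is correct and matches the cited source.

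One small remark: if you carry your computation of $\mathrm{Im}(w'/w)$ to the end you will find $\theta'=\gamma'-\dfrac{V}{\gamma'}\sin^{2}\theta$, i.e.\ without the factor $1/2$ appearing in \eqref{Gformulatheta} as printed. This is consistent with how the formula is actually \emph{used} later in the paper (compare \eqref{Gnonlinear} with \eqref{GnonlinearV}), so the discrepancy is a typographical slip in the statement rather than an error in your derivation.
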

\begin{remark}
Let $\delta(x)$ be continuous function such that
\begin{equation}\label{Gformulare1}
    \varphi^{\prime}(x)=i|\varphi^{\prime}(x)| e^{i\delta(x)}.
\end{equation}
The we have the following precise relations,
\begin{equation}\label{Gformulare2}
    u(x)=R(x)|\varphi(x)|\sin\theta(x)
\end{equation}
and
\begin{equation}\label{Gformulare3}
    u^{\prime}(x)=R(x)|\varphi^\prime(x)|\cos(\theta(x)+\delta(x)-\gamma(x)).
\end{equation}
\end{remark}

\begin{proof}[Proof of Theorem \ref{mainthm1}]
Suppose $u$ is an eigensolution with corresponding eigenvalue $E\in (a_n,b_n)$.
By   \eqref{GformulaR} and the assumption \eqref{Gassum1}, we have
\begin{equation}
    \ln R(x)\geq \ln R(x_0) -\frac{1}{3}\int_{x_0}^{x} \frac{1}{1+x}dx
\end{equation}

for large $x_0$ and $x>x_0$.  Fixing $x_0$, we obtain for large $x$ and a constant $\widetilde C$,
\begin{equation*}
    R(x)\geq \frac{1}{\widetilde C x^{\frac{1}{3}}}.
\end{equation*}
This contradicts \eqref{Gformulanorm} and $u\in L^2(\R^+)$. Here we used the basic fact that $Hu=Eu$ and $u\in L^2(\R^+)$ imply  $u^{\prime}\in L^2(\R^+)$.
\end{proof}
\section{\dco{A non-technical summary of our method}}\label{Section:Summary}

\dco{Since the calculations in the next few sections will be very technical and complicated, let us first provide a non-technical summary of our technique, to help the reader understand how everything connects in the big picture. The challenge of our construction is that we are trying to create many diffferent eigenvalues (perhaps a countably infinite number) simulateneously. In other words, our solution must decay fast enough for many different values of the energy $E$; let's say we desire a $V(x)$ that induces embedded eigenvalues at $E=E_1, E_2, E_3 \ldots$. The difficulty is, if we create a potential $V_{E_1}(x)$ that produces a decaying eigensolution $u_{E_1}$ that corresponds to an energy $E_1$, that potential might cause solutions $u_{E_2}(x), u_{E_3}(x), \ldots$ corresponding to $E_2,E_3,\ldots$ to grow.}

\dco{
We thus perform a complicated concatentation process on the potential $V(x)$ to ensure that all the eigensolutions $u_{E_1}(x), u_{E_2}(x), u_{E_3}(x),\ldots$ decay quickly. At each stage of the concatenation (think of a stage as an interval in $[0,\infty)$ ), we construct a potential that forces the eigensolution corresponding to a single energy to decay. In one stage, we construct $V_{E_1}(x)$ so that the eigensolution $u_{E_1}$ corresponding to an energy $E_1$ decays very quickily, while we prove upper bounds on how much the eigensolutions $u_{E_2}, u_{E_3}, \ldots$ corresponding to the other desired energies can grow. Then we concatenate a next stage $V_{E_2}(x)$, that makes eigenfunction corresponding to a second energy $E_2$ decay quickly, while limiting how much the other eigenfunctions $u_{E_1}(x), u_{E_3}(x), \ldots$ can grow, et cetera. We then alternate these stages. If we have a finite number of $E_1, E_2, E_3, \ldots E_k$ we simply repeat the $V_{E_1}(x), V_{E_2}(x),\ldots, V_{E_k}(x),$ stages periodically. If we have infinitely many $E_1, E_2, E_3,\ldots$ the concatenation gets more complicated,  but it is still possible to alternate the stages in such a way that the $V_{E_k}(x)$ concatenation occurs infinitely many times for every $k$ (albeit each $V_k(x)$ concatenation occurs more and more rarely as $x$ increases).  We construct the $V_{E_1}(x), V_{E_2}(x), \ldots$ in such a way that each eigensolution decays quickly enough at the stages where we are focusing on them, that it compensates for how they might grow when we are focusing on other eigensolutions. We perform this delicate procedure and this results in all the desired eigensolutions decaying quickly enough to be in $\ell^2$.}

\dco{
Intuitively, our construction works in the following way: we bound the eigenfunctions by integrals that involve decaying oscillatory terms, for instance involving sines and cosines. It is unsurprising that we can do this, since in our setting the background potential is periodic and the perturbative potential we construct is formed by concatenating chopped-up pieces of decaying oscillatory functions. We then carefully show that for the decaying oscillatory terms in our integral bound, the positive parts of the oscillation mostly cancels out with the negative part, and this results in small upper bounds for the sizes of our eigenfunctions.
}

\dco{In Section \ref{Section:Prep} we will prove various lemmas that show integrals of various oscillating expressions are small. This will culminate in Proposition \ref{Twocase}, which is the proposition that asserts that there exists a $V_{E_1}(x)$ that ensures that the eigenfunction $u_{E_1}(x)$ decays very quickly, and the other eigenfunctions $u_{E_2}(x),u_{E_3}(x),\ldots$ will not grow too much. In Section \ref{Section:Construct} we explain how we concatenate the $V_{E_1}(x), V_{E_2}(x),\ldots $ stages, and we prove that we do indeed get eigenvalues where we desire them }

\dco{For the discrete case, although the calculations are different the idea is more or less the same as what we explained above for the continuous case.}

\section{Some preparations for construction in the continuous setting}\label{Section:Prep}
Before we proceed with our perturbative construction, we will have to lay some groundwork to ensure that certain key terms decay quickly enough for our purposes. This section is the most novel and difficult of our paper, and demonstrates clearest why perturbing a periodic operator is more challenging than perturbing a free operator.

 For any $E\in(a_n,b_n)$, we consider the non-linear differential equation for $x>b$,
 \begin{equation}\label{Gnonlinear}
    \theta^\prime(x,E,a,b,\theta_0)=\gamma^\prime(x,E)+\frac{C}{\gamma^\prime(x,E) (1+x-b)}\sin2\theta\sin^2\theta,
 \end{equation}
 where $C$ is a large constant that will be chosen later. Solving  \eqref{Gnonlinear} on $[a,\infty)$ with
 initial condition $\frac{ \theta^{\prime}(a)}{\theta(a)}=\tan \theta_0$, where $a>b$, we get a unique solution.
 Notice that $\theta$ depends on $a,\theta_0$ and $E$.
 Set
 \begin{equation}\label{GnonlinearV}
     {V}(x,E,a,b,\theta_0)=-\frac{C}{1+x-b}\sin2\theta(x).
 \end{equation}

 \begin{proposition}\label{Fourier}
 Suppose $\theta(x,E,a,b,\theta_0)$ is given by  \eqref{Gnonlinear}, $k(E)\neq \frac{\pi}{2}$ and  ${V}(x,E,a,b,\theta_0)$ is given by \eqref{GnonlinearV}.  Then we have
   \begin{equation}\label{Gfourier1}
    \int_{a}^x \frac{1}{1+y}\cos4\theta(y) dy=O(1).
\end{equation}
Let $\hat{E}$ be another energy  in  $\cup_{\ell}(a_{\ell},b_{\ell})$ such that $k(\hat{E})\neq k(E)$ and $k(\hat{E})+ k(E)\neq \pi$.
Suppose  $\theta(x,\hat{E})$ is a solution of
\begin{equation*}%\label{Gformulatheta}
   \theta^\prime(x,\hat{E})=\gamma^\prime(x,\hat{E})-\frac{{V}(x,E,a,b,\theta_0)}{2\gamma^\prime(x,\hat{E})} \sin^2 \theta(x,\hat{E}).
\end{equation*}
Then
 \begin{equation}\label{Gfourier2}
    \int_{x_0}^x\frac{1}{ 2\gamma^{\prime}(y,\hat{E})}\frac{1}{1+y-b}\sin2\theta(y,{E})\sin2\theta(y,\hat{E}) dy=O(\frac{1}{x_0-b}),
\end{equation}
for any $x>x_0>a$.
 \end{proposition}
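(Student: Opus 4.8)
\textbf{Proof proposal for Proposition \ref{Fourier}.}

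The plan is to treat both integrals via the same mechanism: integration by parts against the oscillatory factor, using the near-linear growth of the Pr\"ufer angles $\theta(\cdot,E)$ and $\theta(\cdot,\hat E)$ together with the slow decay of the weights $\frac{1}{1+y}$, $\frac{1}{1+y-b}$. First I would record the basic structural facts. From \eqref{Gnonlinear} we have $\theta'(x,E) = \gamma'(x,E) + O\!\left(\frac{1}{1+x-b}\right)$, and by \eqref{Ggammad} the leading term $\gamma'(x,E)$ is bounded above and below by positive constants; the same holds for $\theta'(x,\hat E)$ since the perturbation $V(x,E,a,b,\theta_0) = O\!\left(\frac{1}{1+x-b}\right)$ enters $\theta'(\cdot,\hat E)$ with a bounded coefficient. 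Thus on the relevant range $\theta(\cdot,E)$ and $\theta(\cdot,\hat E)$ are $C^1$, strictly increasing, with derivatives comparable to constants. The key quantitative input is that $\gamma'(x,E)$ is asymptotically constant in the averaged sense: writing $\gamma(x,E) = k(E)x + \widetilde p(x,E)$ with $\widetilde p$ periodic (this is exactly the content of \eqref{Gfqp}--\eqref{Gfl}), one sees $\gamma'$ has a constant mean $k(E)$ over each period. The hypotheses $k(E)\neq\frac\pi2$, $k(\hat E)\neq k(E)$, $k(E)+k(\hat E)\neq\pi$ are precisely the non-resonance conditions guaranteeing that the relevant phase combinations — $4\theta$ for the first integral, and $2\theta(\cdot,E)\pm 2\theta(\cdot,\hat E)$ for the second — have derivatives bounded away from zero.

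For \eqref{Gfourier1}: expand $\cos 4\theta(y) = \frac{d}{dy}\!\left(\frac{\sin 4\theta(y)}{4\theta'(y)}\right) + \frac{\sin 4\theta(y)\,\theta''(y)}{4\theta'(y)^2}$ and substitute into $\int_a^x \frac{\cos 4\theta(y)}{1+y}\,dy$. Integrating the first piece by parts moves the derivative onto $\frac{1}{(1+y)\,4\theta'(y)}$: the boundary terms are $O(1)$ uniformly (since $\theta'$ is bounded below), and the new integrand is $O\!\left(\frac{1}{(1+y)^2}\right) + O\!\left(\frac{|\theta''(y)|}{(1+y)\theta'(y)^2}\right)$, both integrable — here I would use that from \eqref{Gnonlinear}, $\theta''(y) = \gamma''(y,E) + O\!\left(\frac{1}{1+y-b}\right)$, and $\gamma''$ is bounded (periodic part of a smooth Floquet solution), so $\frac{|\theta''|}{1+y}$ is $O\!\left(\frac{1}{1+y}\right)$, which is \emph{not} integrable. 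This is the subtle point: one must not just bound $\theta''$ but again exploit oscillation. So instead I would split $\theta' = \gamma'(y,E) + r(y)$ with $r(y) = O\!\left(\frac{1}{1+y-b}\right)$, write $\frac{1}{\theta'} = \frac{1}{\gamma'} + O\!\left(\frac{1}{1+y-b}\right)$, and reduce to controlling $\int \frac{\cos 4\theta(y)}{(1+y)}\,dy$ after this replacement; the genuinely oscillatory kernel is then $\frac{\cos 4\theta(y)}{1+y}$ with $\theta$ replaced by something whose derivative is $\gamma'(y,E)$ up to integrable error, and the periodicity of $\gamma'$ lets one iterate integration by parts treating $4\gamma(y,E) = 4k(E)y + (\text{periodic})$ — here $4k(E) \neq 0 \pmod{2\pi}$ fails only if $k(E) \in \{0,\frac\pi2,\pi\}$, which is excluded, so the effective frequency never vanishes and one period-by-period summation of a telescoping-type series converges.

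For \eqref{Gfourier2}: write $\frac{\sin 2\theta(y,E)\sin 2\theta(y,\hat E)}{2\gamma'(y,\hat E)(1+y-b)} = \frac{\cos(2\theta(y,E)-2\theta(y,\hat E)) - \cos(2\theta(y,E)+2\theta(y,\hat E))}{4\gamma'(y,\hat E)(1+y-b)}$. For each of the two phases $\psi_\pm(y) = 2\theta(y,E) \pm 2\theta(y,\hat E)$ we have $\psi_\pm'(y) = 2\gamma'(y,E) \pm 2\gamma'(y,\hat E) + O\!\left(\frac{1}{1+y-b}\right)$, and the averaged frequency is $2k(E) \pm 2k(\hat E)$, which is bounded away from $0 \pmod{2\pi}$ precisely because $k(E)\neq k(\hat E)$ (for the minus sign) and $k(E)+k(\hat E)\neq\pi$ (for the plus sign) and neither equals $0$ or $\pi$. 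Then integrate by parts as above: $\cos\psi_\pm(y) = \frac{d}{dy}\!\left(\frac{\sin\psi_\pm(y)}{\psi_\pm'(y)}\right) + \frac{\sin\psi_\pm(y)\psi_\pm''(y)}{\psi_\pm'(y)^2}$, and after integration by parts against $\frac{1}{4\gamma'(y,\hat E)(1+y-b)}$ the boundary term at $x_0$ is $O\!\left(\frac{1}{1+x_0-b}\right)$ and at $x$ is $O\!\left(\frac{1}{1+x-b}\right) = O\!\left(\frac{1}{1+x_0-b}\right)$, while the resulting integrand is $O\!\left(\frac{1}{(1+y-b)^2}\right)$ after again absorbing the $\psi''$ term by the same oscillation-exploiting replacement as in the first part (using $\psi_\pm''(y) = 2\gamma''(y,E)\pm 2\gamma''(y,\hat E) + O\!\left(\frac{1}{(1+y-b)^2}\right)$ and re-integrating by parts the one remaining $\frac{(\text{bounded periodic oscillation})}{1+y-b}$ term). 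Integrating $\frac{1}{(1+y-b)^2}$ from $x_0$ to $x$ gives $O\!\left(\frac{1}{1+x_0-b}\right)$, and since $x_0 > a > b$ this is $O\!\left(\frac{1}{x_0-b}\right)$, as claimed.

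\textbf{Main obstacle.} The delicate part is not the first integration by parts but handling the error term it generates: naively the $\theta''$ (equivalently $\gamma''$) contribution produces a kernel decaying only like $\frac{1}{1+y}$ (resp. $\frac{1}{1+y-b}$), which is non-integrable, so one cannot close the estimate by absolute values alone. The resolution — and I expect this is where the ``Fourier expansion'' advertised in the introduction enters — is to expand the periodic functions $\gamma'(\cdot,E)$, $\gamma''(\cdot,E)$ (and their analogues at $\hat E$) in Fourier series, so that every term in the integrand is of the form (slowly decaying amplitude)$\times e^{i m \cdot(\text{linear phase})}$ with a \emph{nonzero} effective frequency $4k(E)+2\pi j$, $2(k(E)\pm k(\hat E))+2\pi j$, etc., never zero thanks to the non-resonance hypotheses; each such pure oscillation can be integrated by parts once more to gain a factor $\frac{1}{1+y}$, after which summation over the Fourier modes (whose coefficients decay because the Floquet solutions are smooth) converges. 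Keeping all these estimates uniform in the parameters $a,b,\theta_0$ — in particular getting the \emph{explicit} $O\!\left(\frac{1}{x_0-b}\right)$ rate in \eqref{Gfourier2} rather than a mere $o(1)$ — requires carefully tracking that every constant depends only on $E$, $\hat E$ (through $G$, the Fourier decay of $p$, and the spectral gaps $|4k(E) - 2\pi\mathbb Z|$, $|2(k(E)\pm k(\hat E)) - 2\pi\mathbb Z|$), and never on $a,b,\theta_0$.
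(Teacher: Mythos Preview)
Your product-to-sum reduction and identification of the non-resonance conditions are correct, and your ``Main obstacle'' paragraph correctly anticipates that a Fourier expansion of periodic quantities is the key. However, there is a genuine gap in the treatment of the second integral: you integrate by parts directly against the phase $\psi_-(y)=2\theta(y,E)-2\theta(y,\hat E)$, writing $\cos\psi_- = \frac{d}{dy}\bigl(\frac{\sin\psi_-}{\psi_-'}\bigr)+\cdots$, but this requires $\psi_-'$ to be bounded away from zero \emph{pointwise}. Since $\psi_-'(y)=2\gamma'(y,E)-2\gamma'(y,\hat E)+O\bigl(\frac{1}{1+y-b}\bigr)$ and $\gamma'(\cdot,E)-\gamma'(\cdot,\hat E)$ is merely the difference of two positive periodic functions, it can (and for generic $V_0$ will) have zeros. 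The non-resonance hypothesis $k(E)\neq k(\hat E)$ only controls the \emph{mean} of this difference, not its pointwise values, so your first integration by parts is illegitimate before any Fourier expansion has been performed.

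The paper resolves this by reversing the order of operations. It first separates the periodic part of the Pr\"ufer angle itself: writing $\gamma(x,E)=k(E)x+\eta(x,E)$ with $\eta$ periodic, one sets $\tilde\theta(x,E)=\theta(x,E)-\eta(x,E)$, so that $\tilde\theta'(x,E)=k(E)+O\bigl(\frac{1}{1+x-b}\bigr)$ is close to a genuine \emph{constant}. Trigonometric identities then move all the periodic content into coefficient functions such as $\frac{\sin(2\eta(y,E)-2\eta(y,\hat E))}{\gamma'(y,\hat E)}$, which are Fourier-expanded; each Fourier mode $e^{2\pi i\ell y}$ combines with the remaining phase $2\tilde\theta(y,E)-2\tilde\theta(y,\hat E)$ to give a total phase with derivative $2\pi\ell-2(k(E)-k(\hat E))+O\bigl(\frac{1}{1+y-b}\bigr)$, and \emph{this} is bounded away from zero by the non-resonance hypotheses. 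Only at that stage does the oscillatory integral argument proceed (the paper uses a direct partition into half-periods and alternating-sign cancellation rather than integration by parts, but once the phase derivative is near-constant either method works). The summability over Fourier modes is handled by Cauchy--Schwarz together with a gain of $\frac{1}{\ell}$ from the oscillatory estimate for the $\ell$-th mode. Your proposal can be repaired along these lines, but the Fourier separation must come \emph{before} any integration by parts, not after.
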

 \begin{proof}
 We only give the proof of \eqref{Gfourier2}.  The proof of \eqref{Gfourier1} is similar. Without loss of generality, we  assume $x_0>a$ is large.
 First, using \eqref{Gformulatheta} and \eqref{Gnonlinear} we have the differential equations of $\theta(x,E)$ and $\theta(x,\hat{E})$,
\begin{equation}\label{Gfequ1}
\theta^\prime(x,E)=\gamma^\prime(x,E)+\frac{C}{\gamma^\prime(x,E) (1+x-b)}\sin2\theta(x,E)\sin^2\theta(x,E),
\end{equation}
and
\begin{equation}\label{Gfequ2}
\theta^\prime(x,\hat{E})=\gamma^\prime(x,\hat{E})+\frac{C}{\gamma^\prime(x,\hat{E}) (1+x-b)}\sin2\theta(x,E)\sin^2\theta(x,\hat{E}).
\end{equation}
By \eqref{Gfqp} and \eqref{Gfl}, we have
\begin{equation}\label{Gfequ3}
    \gamma (x,E)=k(E)x+\eta(x,E),
\end{equation}
where $\eta(x,E)\mod 2\pi$ is a function that is $1$-periodic in $x$.

Observe that by basic trigonometry,

\begin{equation}
-2\sin 2\theta(y,{E})\sin 2\theta(y,\hat{E})=\cos (2\theta(y,{E})+2\theta(y,\hat{E}))-\cos (2\theta(y,{E})-2\theta(y,\hat{E})).
\end{equation}

Thus it suffices for us to find a bound for

\begin{equation}\label{Dec6}
\int_{x_0}^x\frac{\cos (2\theta(y,{E})\pm2\theta(y,\hat{E}))}{ 2\gamma^{\prime}(y,\hat{E})(1+y-b)} dy.
\end{equation}
For simplicity, let us focus on the $2\theta(y,{E})-2\theta(y,\hat{E})$ case. The $2\theta(y,{E})+2\theta(y,\hat{E})$ case will proceed  in a similar way.

By \eqref{Gfequ1},\eqref{Gfequ2} and \eqref{Gfequ3}, we have
\begin{equation}\label{GfourierTheta}
\dfrac{d}{d x}([\theta(x,{E})-\eta(x,E)]-[\theta(x,{\hat E})-\eta(x,\hat{E})]=(k(E)-k(\hat {E}))+\frac{O(1)}{1+x-b}.
\end{equation}
Let
\begin{equation*}
  \tilde{ {\theta}}(x,E) =\theta(x,{E})-\eta(x,E),
\end{equation*}
and
\begin{equation*}
     \tilde{\theta}(x,\hat{E}) =\theta(x,\hat{E})-\eta(x,\hat{E}).
\end{equation*}
 By trigonometry again,
 one has
 \begin{eqnarray*}
 % \nonumber to remove numbering (before each equation)
   \cos (2\theta(x,{E})-2\theta(x,\hat{E})) &=& \cos (2\tilde{\theta}(x,{E})-2\tilde{\theta}(x,\hat{E})+2\eta(x,E)-2\eta(x,\hat{E})) \\
    &=& \cos(2\eta(x,E)-2\eta(x,\hat{E}))\cos (2\tilde{\theta}(x,{E})-2\tilde{\theta}(x,\hat{E}))\\
    &&-\sin(2\eta(x,E)-2\eta(x,\hat{E}))\sin (2\tilde{\theta}(x,{E})-2\tilde{\theta}(x,\hat{E})).
 \end{eqnarray*}
 Thus
\begin{eqnarray*}
% \nonumber to remove numbering (before each equation)
  \int_{x_0}^x\frac{\cos (2\theta(y,{E})-2\theta(y,\hat{E}))}{ 2\gamma^{\prime}(y,E)(1+y-b)} dy &=& \int_{x_0}^x\frac{\cos(2\eta(y,E)-2\eta(y,\hat{E}))}{ 2\gamma^{\prime}(y,E)}\frac{\cos (2\tilde{\theta}(y,{E})-2\tilde{\theta}(y,\hat{E}))}{1+y-b} dy \\
 &&- \int_{x_0}^x\frac{\sin(2\eta(y,E)-2\eta(y,\hat{E}))}{ 2\gamma^{\prime}(y,E)}\frac{\sin (2\tilde{\theta}(y,{E})-2\tilde{\theta}(y,\hat{E}))}{1+y-b} dy.
\end{eqnarray*}
%\begin{equation}
%\theta(y,{E})-\theta(y,{\hat E})=(\gamma(y,E)-\gamma(y,\hat E)+O(\ln x),
%\end{equation}
%
%or equivalently
%
%\begin{equation}
%\theta(y,{E})-\theta(y,{\hat E})=(k(E)-k(\hat E))y+O(\ln x).
%\end{equation}
Again, because the estimate of the other term follows in a similar way, we only give the estimate for
\begin{equation}\label{Gfourier5}
\int_{x_0}^x\frac{\sin(2\eta(y,E)-2\eta(y,\hat{E}))}{ 2\gamma^{\prime}(y,E)}\frac{\sin (2\tilde{\theta}(y,{E})-2\tilde{\theta}(y,\hat{E}))}{1+y-b} dy.
\end{equation}

%Thus it is sufficient for us to bound
%\begin{equation}\label{eq:k-kintegral}
%\int_{x_0}^x\frac{\cos ((k(E)-k(\hat E))y)}{ 2\gamma^{\prime}(y,E)(1+x-b)} dy.
%\end{equation}

We proceed by Fourier expansion of $\frac{\sin(2\eta(x,E)-2\eta(x,\hat{E}))}{\gamma^{\prime}(x,E)}$ (which is $1$-periodic continuous) and obtain that
\begin{equation*}
\frac{\sin(2\eta(x,E)-2\eta(x,\hat{E}))}{\gamma^{\prime}(x,E)}=\frac{c_0}{2}+\sum_{k=1}^\infty c_k \cos(2\pi kx)+ d_k \sin(2\pi kx).
\end{equation*}

Plugging this back into \eqref{Gfourier5}, we get

\begin{align}\nonumber \eqref{Gfourier5}=\int_{x_0}^x \frac{c_0}{2} \frac{\sin (2\tilde{\theta}(y,{E})-2\tilde{\theta}(y,\hat{E}))}{(1+y-b)}dx+\sum_{k=1}^\infty c_k \cos(2\pi ky)\frac{\sin (2\tilde{\theta}(y,{E})-2\tilde{\theta}(y,\hat{E}))}{(1+y-b)}dy\\+\sum_{k=1}^\infty d_k \sin(2\pi ky)\frac{\sin (2\tilde{\theta}(y,{E})-2\tilde{\theta}(y,\hat{E}))}{(1+y-b)}dy.\label{eq:Fouriersum}
\end{align}
By  the Cauchy-Schwarz inequality, \eqref{eq:Fouriersum} and the fact that
    $\sum c_\ell^2+d_\ell^2<\infty$, we only need to show that for $\ell>0$
\begin{equation*}
  \int_{x_0}^x  \cos(2\pi \ell y)\frac{\sin (2\tilde{\theta}(y,{E})-2\tilde{\theta}(y,\hat{E}))}{(1+y-b)}dy=\frac{1}{\ell}O\left(\frac{1}{x_0-b}\right)
\end{equation*}
and
\begin{equation}\label{Gfourier6}
  \int_{x_0}^x  \sin(2\pi \ell y)\frac{\sin (2\tilde{\theta}(y,{E})-2\tilde{\theta}(y,\hat{E}))}{(1+y-b)}dy=\frac{1}{\ell}O\left(\frac{1}{x_0-b}\right),
\end{equation}
and
\begin{equation*}
    \int_{x_0}^x   \frac{\sin (2\tilde{\theta}(y,{E})-2\tilde{\theta}(y,\hat{E}))}{(1+y-b)}dx=O\left(\frac{1}{x_0-b}\right).
\end{equation*}

As before, we only give the proof of \eqref{Gfourier6}.

By trigonometry, we have
\begin{align}\nonumber
  \int_{x_0}^x  \sin(2\pi \ell y)\frac{\sin (2\tilde{\theta}(y,{E})-2\tilde{\theta}(y,\hat{E}))}{(1+y-b)}dy=&  \int_{x_0}^x \frac{\cos(2\pi \ell y- (2\tilde{\theta}(y,{E})-2\tilde{\theta}(y,\hat{E}))) }{2(1+y-b)}dy\\
  &-\frac{\cos(2\pi \ell y+ (2\tilde{\theta}(y,{E})-2\tilde{\theta}(y,\hat{E}))) }{2(1+y-b)}dy.
  \label{Gfourier7}
\end{align}
By the same reason, we only prove that
\begin{equation}\label{Gfourier8}
    \int_{x_0}^x \frac{\cos(2\pi \ell y- (2\tilde{\theta}(y,{E})-2\tilde{\theta}(y,\hat{E}))) }{1+y-b}dy=\frac{1}{\ell}\frac{O(1)}{x_0-b{+1}}.
\end{equation}

Since   $k(E)$ and $ k(\hat E)$ are distinct, we must have %(by proper choices $\mod 2\pi$)
\begin{equation}
0<|k(E)- k(\hat E)|< \pi.
\end{equation}
Note that since the other case has a minus instead of a plus, here is where we need the restriction $k(E)+k(\hat E)\neq\pi$.

Denote
\begin{equation*}
    \tilde{\theta}_\ell(x)=2\pi \ell x- 2(\tilde{\theta}(x,{E})-\tilde{\theta}(x,\hat{E})),
\end{equation*}
and
\begin{equation*}
    \tilde{\ell}=2\pi \ell-2(k(E)-k(\hat{E}))>0.
\end{equation*}

By \eqref{GfourierTheta}, one has
\begin{equation}\label{Gfourier9}
    \tilde{\theta}'_\ell(x)=\tilde{\ell}+\frac{O(1)}{1+x-b}
\end{equation}
Observe that this is positive if $x-b$ is sufficiently large.

 Let $i_0$ be the largest   integer such that $2\pi i_0+\frac{\pi}{2}<\tilde{\theta}_\ell(x_0)$.
 By \eqref{Gfourier9},
 there exist
  $x_0<x_1<x_2<\cdots<x_t<x_{t+1}$  such that $x$ lies in $[x_{t-1},x_t)$ and
 \begin{equation}\label{tildetheta}
   \tilde{\theta}_\ell(x_i)= 2\pi i_0+\frac{2i-1}{2}\pi
 \end{equation}
 for $i=1,2,\cdots,t,t+1$.

 By integrating \eqref{Gfourier9}, we obtain
 \begin{equation}
  \tilde\theta_\ell(x)=\tilde{\ell}x+O(1)\ln(1+x-b)
 \end{equation}
 And so
 \begin{align*}
 \tilde\ell|x_{i+1}-x_{i}|=&\tilde\theta_\ell(x_{i+1})-\tilde\theta_\ell(x_{i})+O(1)\ln\left(\frac{1+x_{i+1}-b}{1+x_i-b}\right)\\
 =&\pi+O(1)\ln\left(\frac{1+x_{i+1}-b}{1+x_i-b}\right)\text{by \eqref{tildetheta}}\\
  = &\pi+O(1)\left|\ln\left(1+\frac{x_{i+1}-x_i}{1+x_i-b}\right)\right|\\
   = &\pi+O(1)\left|\frac{x_{i+1}-x_i}{1+x_i-b}\right|.
 \end{align*}
 This implies
 \begin{equation*}
    |x_{i+1}-x_{i}|=\frac{\pi}{ \tilde{\ell}}+ \frac{O(1)}{\tilde \ell(x_i+1-b)},
 \end{equation*}

and so for sufficiently large $x_i-b$,
 \begin{equation}\label{e2}
  x_i\geq x_0+\frac{i\pi}{2\tilde{\ell}}.
 \end{equation}
% and
% \begin{equation}\label{e3}
%    (\frac{\pi}{2k}-\epsilon)i<x_i-x_0<(\frac{\pi}{2k}+\epsilon)i.
% \end{equation}
 Similarly, for $y\in[x_i,x_{i+1})$, we have

 \begin{eqnarray*}
 % \nonumber to remove numbering (before each equation)
    \tilde{\theta}_\ell(y) %&=&2\pi i_0+ i\pi+\tilde{k}(y-x_i)+\frac{O(1)}{k (x_i-b)} \\
     &=& 2\pi i_0+i \pi-\frac{\pi}{2}+\tilde{\ell}(y-x_i)+\frac{O(1)}{\tilde \ell (1+x_i-b)}.
 \end{eqnarray*}
 Which implies
 \begin{eqnarray}
 % \nonumber to remove numbering (before each equation)
   \nonumber && \int_{x_i}^{x_{i+1}}|\cos(2\pi \ell y- 2(\tilde{\theta}(y,{E})-\tilde{\theta}(y,\hat{E}))) |dy\\
   \nonumber  &=&\int_{x_i}^{x_{i+1}}|\cos\tilde{\theta}_\ell(y)|dy \\
     &=&2\int_{0}^{\frac{\pi}{2\tilde{\ell}}}\cos( \tilde{\ell}y )dy+ \frac{O(1)}{\tilde \ell ^2(1+x_i-b)} =\frac{1}{\tilde{\ell}}+\frac{O(1)}{\tilde \ell^2(1+x_i-b)}. \label{e1}
 \end{eqnarray}
Notice that $\cos(2\pi \ell x- 2(\tilde{\theta}(x,{E})-\tilde{\theta}(x,\hat{E})))$ changes the sign at $x_i$. The integral also has some cancellation between $(x_{i-1},x_i)$ and $(x_{i },x_{i+1})$.
Let $t^\prime\in\{t,t+1\}$ such that ${t}^\prime $ is odd.

By  \eqref{e1}, we obtain

 \begin{eqnarray}
 % \nonumber to remove numbering (before each equation)
  \nonumber\int_{x_0}^x\frac{\cos(2\pi \ell y-2 (\tilde{\theta}(y,{E})-\tilde{\theta}(y,\hat{E})))}{1+y-b}dy&=&\frac{O(1)}{\tilde\ell(1+x_0-b)}+
   \int_{x_1}^{x_{t^\prime}}\frac{\cos\tilde{\theta}_\ell(y)}{1+y-b}dy \\
  \nonumber   &=& \frac{O(1)}{\tilde\ell(1+x_0-b)}+\sum_{i=1}^{t+1}\frac{O(1)}{\tilde\ell^2(1+ x_i-b)}\frac{1}{1+x_i-b}\\
      &=&\frac{O(1)}{\tilde\ell(1+x_0-b)},\label{ellestimate}
 \end{eqnarray}
 %\dco{by \eqref{e1}, shouldn't the above estimate use $\tilde{\ell}$ rather than $\ell$? I think the correct bound is $O(1)/\tilde \ell(1+x_0-b)$. It is fine since $\ell$ and $|\frac{\ell}{\tilde{\ell}}|$ is bouded}
where the last equality holds by \eqref{e2}. Since $\ell/\tilde\ell$ is bounded, \eqref{Gfourier8} follows. This concludes our proof.
\end{proof}
\begin{remark}
In order to estimate the other part of  \eqref{Dec6}, that is
\begin{equation*}
\int_{x_0}^x\frac{\cos (2\theta(y,{E})+2\theta(y,\hat{E}))}{ 2\gamma^{\prime}(y,\hat{E})(1+x-b)} dy,
\end{equation*}
we need the assumption $k(E)+k(\hat{E})\neq \pi$.
\end{remark}
 \begin{lemma}\label{thm2}
 Fix $E\in (a_n,b_n)$ and boundary condition $\theta_0\in[0,\pi)$. Then there exists a $\psi_0\in [0,\pi)$ such that
 under the potential of $V$ given by \eqref{GnonlinearV}, the  solution of $Hu=Eu$ on $[a,\infty)$ with boundary condition $\frac{u^{\prime}(a)}{u(a)}=\tan\theta_0$ satisfies
 \begin{equation}\label{Gdecay}
  \ln  R(x,E) -\ln R(a,E)\leq -100\ln \frac{x-b}{a-b}+C
 \end{equation}
  and
 \begin{equation}\label{Gdecay1}
  \ln  R(x,E) \leq \ln R(a,E)
 \end{equation}
 for all $x>a$.
 \end{lemma}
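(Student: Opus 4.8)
The plan is to set things up so that the angle $\theta$ used to define the potential $V$ in \eqref{GnonlinearV} is, after the fact, exactly the generalized Pr\"ufer angle of the eigensolution $u$, and then to read both bounds off the $R$-equation \eqref{GformulaR}. Concretely, given the boundary data $\frac{u'(a)}{u(a)}=\tan\theta_0$, the relations \eqref{Gformulare2}--\eqref{Gformulare3} evaluated at $x=a$ assign a well-defined Pr\"ufer angle at $a$ to any solution of $Hu=Eu$; I let $\psi_0\in[0,\pi)$ be this value (which depends only on $E$ and $\theta_0$). I then use $\psi_0$ as the initial angle in \eqref{Gnonlinear}, producing $\theta(\cdot)=\theta(\cdot,E,a,b,\psi_0)$ and the potential $V(\cdot)=V(\cdot,E,a,b,\psi_0)$. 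The point is that \eqref{Gnonlinear} is, once $V$ from \eqref{GnonlinearV} is substituted, precisely the Pr\"ufer equation \eqref{Gformulatheta}; since the Pr\"ufer angle of $u$ also solves \eqref{Gformulatheta} and agrees with $\theta$ at $x=a$ by the choice of $\psi_0$, uniqueness forces the Pr\"ufer angle of $u$ to equal $\theta$ on all of $[a,\infty)$. Hence $V(x)=-\frac{C}{1+x-b}\sin 2\theta(x)$ with $\theta$ the genuine Pr\"ufer angle of $u$, and \eqref{GformulaR} gives
\[
[\ln R(x)]'=\frac{V(x)}{2\gamma'(x)}\sin 2\theta(x)=-\frac{C\sin^2 2\theta(x)}{2\gamma'(x)(1+x-b)}\le 0,
\]
which already yields \eqref{Gdecay1} upon integrating from $a$ to $x$.

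For \eqref{Gdecay} I would write $\sin^2 2\theta=\frac{1}{2}(1-\cos 4\theta)$ and integrate to get
\[
\ln R(x)-\ln R(a)=-\int_a^x\frac{C\,dy}{4\gamma'(y)(1+y-b)}+\int_a^x\frac{C\cos 4\theta(y)}{4\gamma'(y)(1+y-b)}\,dy.
\]
In the first (main) integral, the upper bound $\gamma'\le G$ from \eqref{Ggammad} gives $\int_a^x\frac{C\,dy}{4\gamma'(y)(1+y-b)}\ge\frac{C}{4G}\ln\frac{1+x-b}{1+a-b}\ge\frac{C}{4G}\ln\frac{x-b}{a-b}-O(1)$; this is the step where we finally fix the construction constant, choosing $C\ge 400G$ (with $G$ depending only on $E$) so that the coefficient comfortably exceeds $100$. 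The second integral is exactly the oscillatory-cancellation estimate \eqref{Gfourier1} of Proposition \ref{Fourier}: the extra $1$-periodic factor $1/\gamma'(y)$ and the harmless shift $1+y-b$ are absorbed by the very same Fourier-expansion argument carried out in the proof of Proposition \ref{Fourier}, so the integral is $O(1)$ uniformly in $x$. Combining the two contributions gives $\ln R(x)-\ln R(a)\le -100\ln\frac{x-b}{a-b}+O(1)$, which is \eqref{Gdecay} after renaming the constant.

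The substantive input is Proposition \ref{Fourier}; granting it, the only real decisions in this lemma are bookkeeping: choosing $\psi_0$ so that the $\theta$ appearing in $V$ is the true Pr\"ufer angle of $u$, and choosing $C$ large relative to the constant $G$ of \eqref{Ggammad}. The one place to exercise care is the resonant energy $k(E)=\frac{\pi}{2}$, which is excluded in Proposition \ref{Fourier}; there the slow drift $\frac{O(1)}{1+y-b}$ in $\theta-\eta$ still makes $\int_a^x\frac{\cos 4\theta(y)}{1+y-b}\,dy$ bounded (after Fourier expansion the critical term is of the form $\int (1+y-b)^{i\alpha-1}\,dy$ up to bounded periodic factors), so the same conclusion holds. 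I would either record this as a short separate case or simply carry the restriction $k(E)\neq\frac{\pi}{2}$ through the statement.
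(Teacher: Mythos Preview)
Your proof is correct and follows essentially the same route as the paper: pick $\psi_0$ so that the $\theta$ defining $V$ coincides with the genuine Pr\"ufer angle of $u$, read off $[\ln R]'\le 0$ from \eqref{GformulaR} to get \eqref{Gdecay1}, and for \eqref{Gdecay} use $\sin^2 2\theta=\tfrac12(1-\cos 4\theta)$ together with \eqref{Gfourier1} and the bound $\gamma'\le G$. You are in fact more careful than the paper on two points it glosses over: that the extra $1$-periodic factor $1/\gamma'$ in the oscillatory integral is handled by the same Fourier-expansion argument behind \eqref{Gfourier1}, and that the hypothesis $k(E)\neq\frac{\pi}{2}$ from Proposition~\ref{Fourier} is silently inherited by the lemma.
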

 \begin{proof}
 Without loss of generality, assume $b=0$.
 Choose some $\psi_0=\theta(a)$ such that
\eqref{Gfl}, \eqref{Gformulare1}, \eqref{Gformulare2} and \eqref{Gformulare3} hold for $x=a$ and  $\frac{u^{\prime}(a)}{u(a)}=\tan\theta_0$.
By \eqref{GformulaR}, \eqref{Gformulatheta}, \eqref{Gnonlinear} and \eqref{GnonlinearV}, we have
\begin{equation}\label{Gdecayformula}
    \ln  R(x,E) -\ln R(a,E)=-\int_{a}^x\frac{ C}{ 2\gamma^{\prime}(y,E)}\frac{1}{1+x}\sin^22\theta(y) dy
\end{equation}
and
\begin{equation}
\theta^\prime(x,E)=\gamma^\prime(x,E)+\frac{C}{2\gamma^\prime(x,E) (1+x)}\sin2\theta\sin^2\theta.
\end{equation}
Observe that \eqref{Gdecay1} follows from \eqref{Gdecayformula} directly.

By \eqref{Gfourier1} in Proposition \ref{Fourier}, one has
%\dco{Citing a future proposition like this is strange. Can we switch the order of Lemma 3.1 and Prop 3.2?}
\begin{equation*}
    \int_{a}^x \frac{1}{1+y}\cos4\theta(y) dy=O(1).
\end{equation*}
This yields that
\begin{eqnarray*}
% \nonumber to remove numbering (before each equation)
  -\int_{a}^x\frac{C}{ 2\gamma^{\prime}(y,E)}\frac{1}{1+y}\sin^22\theta(y) dy &= &  -\int_{x_0}^x\frac{C}{ 4\gamma^{\prime}(y,E)}\frac{1}{1+y}(1-\cos4\theta(y)) dy \\
   &\leq & - \int_{a}^x\frac{C}{ 4\gamma^{\prime}(y,E)}\frac{1}{1+y} dy\\
   &\leq &  -100\ln \frac{x}{a}+C.
\end{eqnarray*}
 \end{proof}

 \begin{lemma}\label{lemmax2}
 Let us use the potential $V(x,E,a,b)$ of Lemma \ref{thm2} in \eqref{Gu}. Let $\hat{E}$ be another energy  in  $\cup_{\ell}(a_{\ell},b_{\ell})$ such that $k(\hat{E})\neq k(E)$ and $k(\hat{E})+ k(E)\neq \pi$.
 %Let $\hatu(x,E)$ be a solution of $Hu=\hat{E}u$.
 Then we have
 \begin{equation}\label{Gnotchange}
    R(x,\hat{E})\leq  1.5 R(x_0,\hat{E}),
 \end{equation}
 for any $x>x_0\geq a$ and large enough $x_0-b$.
 \end{lemma}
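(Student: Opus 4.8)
The plan is to bound $\ln R(x,\hat E) - \ln R(x_0,\hat E)$ using the Pr\"ufer amplitude formula \eqref{GformulaR} for the energy $\hat E$ under the perturbation $V = V(x,E,a,b,\theta_0)$ constructed in \eqref{GnonlinearV}, and then invoke \eqref{Gfourier2} of Proposition \ref{Fourier} to see that this difference is small. Concretely, since $V(x) = -\frac{C}{1+x-b}\sin 2\theta(x,E)$, the formula \eqref{GformulaR} applied to the solution of $Hu = \hat E u$ gives
\begin{equation*}
\ln R(x,\hat E) - \ln R(x_0,\hat E) = \int_{x_0}^x \frac{V(y)}{2\gamma'(y,\hat E)}\sin 2\theta(y,\hat E)\,dy = -\int_{x_0}^x \frac{C}{2\gamma'(y,\hat E)}\frac{\sin 2\theta(y,E)\sin 2\theta(y,\hat E)}{1+y-b}\,dy.
\end{equation*}
By hypothesis $k(\hat E)\neq k(E)$ and $k(\hat E)+k(E)\neq\pi$, so the hypotheses of the second half of Proposition \ref{Fourier} are met (note $\theta(x,\hat E)$ here solves exactly the equation $\theta'(x,\hat E) = \gamma'(x,\hat E) - \frac{V(x,E,a,b,\theta_0)}{2\gamma'(x,\hat E)}\sin^2\theta(x,\hat E)$ appearing there, since this is just \eqref{Gformulatheta} with our chosen $V$). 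Hence \eqref{Gfourier2} yields
\begin{equation*}
\left| \ln R(x,\hat E) - \ln R(x_0,\hat E) \right| = C\cdot O\!\left(\frac{1}{x_0 - b}\right).
\end{equation*}

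First I would note that the implicit constant in the $O$ here depends only on $E$, $\hat E$, and $C$, but crucially not on $x_0$, $x$, or $b$; this is exactly the content of the estimate \eqref{Gfourier2}, whose proof (the long computation in Proposition \ref{Fourier}) tracks the $b$-dependence carefully through \eqref{e2} and \eqref{ellestimate}. Therefore there is a constant $M = M(E,\hat E, C)$ such that $\big|\ln R(x,\hat E) - \ln R(x_0,\hat E)\big| \le \frac{M}{x_0 - b}$ for all $x > x_0 > a$. Choosing $x_0 - b$ large enough that $\frac{M}{x_0-b} \le \ln(1.5)$, we get $\ln R(x,\hat E) \le \ln R(x_0,\hat E) + \ln(1.5)$, i.e. \eqref{Gnotchange}.

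The main obstacle is really not in this lemma itself — given Proposition \ref{Fourier} the argument is a short unwinding of definitions — but in making sure the bookkeeping is honest: one must check that the $\theta(y,\hat E)$ governed by the true perturbed equation \eqref{Gformulatheta} is the same object whose oscillation is controlled in \eqref{Gfourier2}, and that the uniformity in $b$ (so that "large enough $x_0 - b$" is the only requirement, independent of where the stage begins) genuinely comes out of Proposition \ref{Fourier}. Both are true by construction, so the proof is essentially: apply \eqref{GformulaR}, apply \eqref{Gfourier2}, and absorb the resulting $O(1/(x_0-b))$ into $\ln 1.5$.
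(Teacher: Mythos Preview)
Your proof is correct and follows essentially the same route as the paper: apply \eqref{GformulaR} with the specific $V$ from \eqref{GnonlinearV} to express $\ln R(x,\hat E)-\ln R(x_0,\hat E)$ as the integral $-\int_{x_0}^x \frac{C}{2\gamma'(y,\hat E)(1+y-b)}\sin 2\theta(y,E)\sin 2\theta(y,\hat E)\,dy$, then invoke \eqref{Gfourier2} to bound it by $O(1/(x_0-b))$ and absorb into $\ln 1.5$. Your added remarks on verifying that $\theta(\cdot,\hat E)$ satisfies the right equation and on the uniformity in $b$ are accurate and make the argument a bit more explicit than the paper's version.
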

 \begin{proof}

 By \eqref{GformulaR} and \eqref{GnonlinearV}, we have
\begin{equation*}
    \ln  R(x,\hat{E}) -\ln R(x_0,\hat{E})=-\int_{x_0}^x\frac{ C}{ 2\gamma^{\prime}(y,\hat{E})}\frac{1}{1+x}\sin2\theta(y,{E})\sin2\theta(y,\hat{E}) dy.
\end{equation*}

%and
%\begin{equation*}
%\theta^\prime(x,E)=\gamma^\prime(x,E)+\frac{C}{\gamma^\prime(x,E) (1+x-b)}\sin2\theta(x,E)\sin^2\theta(x,E),
%\end{equation*}
%and
%\begin{equation*}
%\theta^\prime(x,\hat{E})=\gamma^\prime(x,\hat{E})+\frac{C}{\gamma^\prime(x,\hat{E}) (1+x-b)}\sin2\theta(x,E)\sin^2\theta(x,\hat{E}).
%\end{equation*}
By \eqref{Gfourier2} in the previous Proposition \ref{Fourier},
\begin{equation*}
    \int_{x_0}^x\frac{1}{ 2\gamma^{\prime}(y,\hat{E})}\frac{1}{1+y-b}\sin2\theta(y,{E})\sin2\theta(y,\hat{E}) dy=O(\frac{1}{x_0-b}),
\end{equation*}
for all $x>x_0\geq a$.
This implies Lemma \ref{lemmax2}.
 \end{proof}
% \begin{remark}
% The constant $C$ \eqref{Gnotchange} depends on $E$ and $\hat{E}$. \dco{There is no constant $C$ in \eqref{Gnotchange}}
% \end{remark}

So far we have a construction of $V$ that is discontinuous. Now we want to assert that we may choose $V$ to be smooth.
\begin{proposition}\label{Twocase}
%Let  $\lambda>0$ is a eigenvalue of the  $ -\frac{d^2}{dx^2}+\hat{V}$ iff $\lambda=\kappa^2$.
Let  $E$ and $ A=\{\hat{E}_j\}_{j=1}^k$ be in $\cup_{\ell}(a_{\ell},b_{\ell})$.  Suppose $k(E)$ and $\{k(\hat{E}_j)\}_{j=1}^k$ are different, and
$k(E)+k(\hat{E}_j)\neq \pi$ for any $j\in\{1,2,\cdots,N\}$.
%Let $\hat{E}$ be another energy  in  $\cup_{\ell}(a_{\ell},b_{\ell})$ such that $k(\hat{E})\neq k(E)$.
Suppose  $\theta_0\in[0,\pi]$. Let $x_1>x_0>b$.
Then there exist constants $K(E, A)$, $C(E, A)$ (independent of $b, x_0$ and $x_1$) and potential $\widetilde V(x,E,A,x_0,x_1,b,\theta_0)$  such that  for $x_0-b>K(E,A)$ the following holds:

   \begin{description}
     \item[Potential]   for $x_0\leq x \leq x_1$, ${\rm supp}(\widetilde V)\subset(x_0,x_1)$,  $\widetilde V\in C^{\infty}(x_0,x_1)$,  and
     \begin{equation}\label{thm141}
        |\widetilde V(x,E,A,x_0,x_1,b,\theta_0)|\leq \frac{C(E, A)}{x-b}
     \end{equation}

     \item[Solution for $E$] the solution of $(H_0+\widetilde V)u=Eu$ with boundary condition $\frac{u^\prime(x_0)}{u(x_0)}=\tan\theta_0$ satisfies
     \begin{equation}\label{thm142}
        R(x_1,E)\leq C(E, A)(\frac{x_1-b}{x_0-b})^{-100} R(x_0,E)
     \end{equation}
     and  for $x_0<x<x_1$,
      \begin{equation}\label{thm143}
        R(x,E)\leq  2 R(x_0,E).
     \end{equation}
      \item[Solution for $\hat{E}_j$] the solution of $(H_0+\widetilde V)u=\hat{E}_ju$ with any boundary condition satisfies
      for $x_0<x\leq x_1$,
      \begin{equation}\label{thm144}
        R(x,\hat{E}_j)\leq  2 R(x_0,\hat{E}_j).
     \end{equation}
   \end{description}

  \end{proposition}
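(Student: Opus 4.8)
The plan is to take the discontinuous potential $V(x,E,b,b,\theta_0)$ built in Lemma~\ref{thm2} (equivalently \eqref{GnonlinearV}), restricted to the window $(x_0,x_1)$, and to produce a smooth companion $\widetilde V$ by multiplying by a cutoff near the endpoints $x_0$ and $x_1$ and then correcting for the mismatch. Concretely, first I would fix a smooth bump $\chi$ supported in $(x_0,x_1)$ that equals $1$ except on intervals of length $O(1)$ near each endpoint, and set a preliminary potential $V_1 = \chi\cdot V$; since $|V(x)|\le C/(x-b)$ and $x_0-b>K$, the cutoff regions only contribute $O(1/(x_0-b))$ to all the relevant integrals, so none of the estimates \eqref{GformulaR}, \eqref{Gfourier1}, \eqref{Gfourier2} are spoiled by more than a bounded multiplicative constant. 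The bound \eqref{thm141} is then immediate with $C(E,A)$ absorbing these constants.

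Next, for the \textbf{Solution for $E$} part, I would rerun the argument of Lemma~\ref{thm2} with $\widetilde V$ in place of $V$: the logarithmic-derivative formula \eqref{Gdecayformula} gives $\ln R(x_1,E)-\ln R(x_0,E) = -\int_{x_0}^{x_1}\frac{C\chi(y)}{2\gamma'(y,E)(1+y-b)}\sin^2 2\theta(y)\,dy$, and writing $\sin^2 2\theta = \tfrac12(1-\cos 4\theta)$, the $\cos 4\theta$ term is $O(1)$ by \eqref{Gfourier1} (the cutoff changes the integrand by an $L^1$ amount that is $O(1/(x_0-b))=O(1)$), while on the region where $\chi\equiv 1$ the main term $-\int \frac{C}{4\gamma'(y,E)(1+y-b)}\,dy$ contributes $-100\ln\frac{x_1-b}{x_0-b}+C$; the endpoint regions contribute a further bounded amount. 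This yields \eqref{thm142}, and monotonicity of the partial integrals together with these $O(1)$ corrections yields \eqref{thm143} exactly as \eqref{Gdecay1} followed from \eqref{Gdecayformula}. For the \textbf{Solution for $\hat E_j$} part I would invoke Lemma~\ref{lemmax2}: since $\widetilde V$ agrees with a cutoff of $V(x,E,b,b,\theta_0)$, \eqref{Gfourier2} applies (again up to the $O(1/(x_0-b))$ cutoff error) to give $\ln R(x,\hat E_j)-\ln R(x_0,\hat E_j)=O(1/(x_0-b))\le \ln 2$ once $x_0-b>K(E,A)$, which is \eqref{thm144}. Finally I would note that one must choose the constant $\psi_0=\theta(x_0)$ matching the boundary condition $u'(x_0)/u(x_0)=\tan\theta_0$, as in the proof of Lemma~\ref{thm2}, so that the nonlinear equation \eqref{Gnonlinear} is solved with the correct initial data.

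The one genuinely delicate point — and the step I expect to require the most care — is the smooth correction at the endpoints. Simply multiplying by $\chi$ changes the \emph{value} of $V$ on two short intervals, and I must check that this perturbation of the nonlinear ODE \eqref{Gnonlinear} for $\theta$ propagates only a bounded error into $\tilde\theta_\ell$ and hence does not destroy the oscillation-cancellation mechanism behind \eqref{ellestimate}; the key is that on an interval of bounded length with $|V|\le C/(x_0-b)$ the angle $\theta$ is perturbed by $O(1/(x_0-b))$, so the Fourier/stationary-phase estimate of Proposition~\ref{Fourier} is stable under this change. A secondary subtlety is keeping all constants $K(E,A)$, $C(E,A)$ independent of $b$, $x_0$, $x_1$: this works because every estimate above depends on $x_0,x_1,b$ only through the scale-invariant ratio $(x_1-b)/(x_0-b)$ and through $x_0-b$ being large, exactly the structure already exploited in Lemmas~\ref{thm2} and~\ref{lemmax2}.
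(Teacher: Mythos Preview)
Your approach is essentially the same as the paper's: take the potential from \eqref{GnonlinearV} with $a=x_0$, smooth it off near the endpoints $x_0$ and $x_1$, and argue that the estimates of Lemmas~\ref{thm2} and~\ref{lemmax2} survive this modification. The paper's own proof is in fact considerably shorter and more heuristic than yours --- it simply modifies $V_1$ near the boundary subject to $|\widetilde V|\le|V_1|$ and invokes continuous dependence of $R$ on $V$ over the finite interval $[x_0,x_1]$, without tracking the integral errors explicitly as you do. Your version, which re-derives \eqref{Gdecayformula} and the Fourier bound \eqref{Gfourier1}, \eqref{Gfourier2} with the cutoff inserted, is a more detailed execution of the same idea; the subtlety you flag in your final paragraph (that perturbing $V$ perturbs the nonlinear ODE for $\theta$ and one must check this does not destroy the oscillation cancellation) is real and is glossed over in the paper's ``continuous dependence'' sentence.

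Two small points. First, your notation $V(x,E,b,b,\theta_0)$ should read $V(x,E,x_0,b,\theta_0)$ --- the initial point $a$ in \eqref{Gnonlinear}--\eqref{GnonlinearV} is $x_0$, not $b$. Second, the cleanest way to handle the subtlety you identify is not to cut off an already-constructed $V$ but to build the cutoff directly into the nonlinear ODE: solve $\tilde\theta'=\gamma'+\frac{C\chi(x)}{\gamma'(1+x-b)}\sin 2\tilde\theta\sin^2\tilde\theta$ with the correct initial data and set $\widetilde V=-\frac{C\chi}{1+x-b}\sin 2\tilde\theta$. Then the Pr\"ufer angle for $(H_0+\widetilde V)u=Eu$ is exactly $\tilde\theta$, your displayed formula for $\ln R(x_1,E)-\ln R(x_0,E)$ is literally correct, and Proposition~\ref{Fourier} applies verbatim since its proof only uses $\theta'=\gamma'+O(1/(x-b))$. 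This sidesteps any need to propagate an angle error over the whole window.
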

  \begin{proof}
  Let $V_1$ be given by \eqref{GnonlinearV} with $ a=x_0$.
  Let $x=x_1$ and $a=x_0$ in Lemmas \ref{thm2} and \ref{lemmax2}.
  We modify $V_1$ on boundary $x=x_0$ and $x=x_1$ a little and obtain $V$. We can also require $|V(x)|\leq |V_1(x)|$. Recall that $R$ is the magnitude of the solution of the linear  differential equation \eqref{GV}. Thus $R(x,E)$ is continuously related to $V$, and so a small  change in $V$ will only result in a small change in $R(x,E)$ in the finite interval  $x\in [x_0,x_1]$. Thus Lemmas \ref{thm2} and \ref{lemmax2} still hold, and this implies Proposition \ref{Twocase}.

  %\begin{tikzpicture}
%\draw [->](0,0)--(8,0);
%\draw [->](0,-2)--(0,4);
%\draw plot [smooth] coordinates { (0.5,0)(1,2)(1.5,0) (2,-1)(2.5,0)(3,2)(3.5,0)(4,-1)(4.5,0)(5,2)(5.5,0)(6,-1)(6.5,0)(7,2)(7.5,0)};
%\draw plot [smooth] coordinates { (0.5,0.3)(0.65,1)(0.8,1.5)};
%\node [below] at (0.5,0){$x_0$};
%\node [below] at (7.5,0){$x_1$};
%
%
%\end{tikzpicture}
  \end{proof}
 \section{Constructing the perturbative potential in the continuous setting}\label{Section:Construct}
In this section we will give a proof of Theorems \ref{mainthm2} and \ref{mainthm3}. We will give the construction of the potential $V$. The idea is to glue the potential $V(x,E,A,x_0,x_1,b,\theta_0)$ in a piecewise manner. Our construction is inspired by \cite{jitomirskaya2018noncompact}, where they use it to construct a rotationally symmetric metric on manifolds.

 Let us fix a band of the absolutely continuous spectrum, and  enumerate the desired embedded eigenvalues in our band spectrum as $E_j$ (we always assume there are countably many). Let $N:\mathbb Z^+\to \mathbb Z^+$ be a non-decreasing  function, $N(1)=1$ and $N(w)$ grows  very slowly  (in other words, we expect $N(w)=N(w+1)$ to be true for ``most" $w\in\mathbb Z_+$). Furthermore, we define $N$ so if $N(w+1)>N(w)$ then $N(w+1)=N(w)+1$.
Let $C_{w}$ be a large constant that depends on the eigenvalues $E_1$ until  $E_{N(w)}$
\begin{equation}\label{DeCk}
 C_{w}=C(E_1,E_2,
\cdots,E_{N(w)}).
\end{equation}
We emphasize that the dependence of $C_{w+1}$ on the $E_j$ does not take into account multiplicity. Thus if $N(w+1)=N(w+2)$ (which we expect to happen very frequently) then $C_{w+1}=C_{w+2}$.
%Let
%\begin{equation}\label{DeCk}
 %C_{k+1}=\max\{K_{N(k+1)}, 2^{ N(k+1)},100,\kappa_1,\kappa_2,
%\cdots,\kappa_{N(k+1)}\}.
%\end{equation}

We have $N(w)=\max N$ for sufficiently large $w$ in the construction of Theorem \ref{mainthm2} and we instead have $\lim_{w}N(w)=\infty$ in the construction of Theorem \ref{mainthm3}.

 Define
 \begin{equation}\label{Twdef}
 T_{w+1}=T_wC_{w+1}
 \end{equation} and $T_0=C_1$.
By modifying $C_w$, we can assume   $T_w$ is large enough so that  \[T_w\geq K(E,\{E_j\}_{j=1}^{N(w)}\backslash E)\] for any $E\in \{E_j\}_{j=1}^{N(w)}$  in   Proposition \ref{Twocase}.

On the other hand,  if $N(w)  $  goes to infinity arbitrarily slowly, then  $C_w  $   can also  go  to infinity arbitrarily slowly. This doesn't contradict our previous statement that $T_w$ is ``large enough", since we can choose the $C_w$ to be large but also choose it to be constant for long stretches of $w\in\mathbb Z_+$.
We do however choose $C_w$ so that it goes to infinity faster than $N(w)$: let us in fact choose $C_w$ so that
\begin{equation}\label{addliu}
    C_{w}\geq 4^{N(w+1)}.
\end{equation}

We can also assume
\begin{equation}\label{GTk}
    T_w\geq 1000^{w}.
\end{equation}
and for large $w$,
\begin{equation*}
    C_w\leq \ln w,
\end{equation*}
and
\begin{equation}\label{Gapr92}
  C_w^2 N(w)\leq \frac{1}{100} \min_{x\in [J_{w-1},J_w]}h(x),
\end{equation}
where $h(x)$ is given by Theorem \ref{mainthm3}.

Let
\begin{equation}\label{Jwdef}
J_w=\sum_{i}^wN(i) T_i .
\end{equation} Notice that $ J_w$ and $T_w$ go to infinity faster than $C_w$. More precisely, we will have $C_w/J_w$ and $C_w/T_w$ both tending to $0$ as $w$ tends to infinity.

We will also define function $V$ ($\text{supp} V\subset (1,\infty)$ ) and $u(x,E_j)$, $j=1,2,\ldots $ on $(1,J_w)$ by induction, such that
\begin{enumerate}[1.]
\item
$u(x,E_j)$ solves  for $x\in (0,J_w)$
\begin{align}\label{eigenengj}
     \left( -\frac{d^2}{dx^2} + V_0(x)+V(x)\right) u(x,E_j)
     =E_j  u(x,E_j),
\end{align}
and satisfies  boundary condition
\begin{equation}\label{1boundaryn}
    \frac{u^{\prime}(0,E_j)}{u(0,E_j)}=\tan\theta_j,
\end{equation}
\item
$u(x,E_i)$  for $i=1,2,\cdots, N(w)$ and $w\geq 2$, satisfies
\begin{eqnarray}
R(J_w,E_i)
  &\leq& 2^{N(w)}  N(w)^{50}C_{w}^{-50}R(J_{w-1},E_i).\label{eigenj}
\end{eqnarray}

\item  $V(x)\in C^{\infty}(J_{w-1},J_w]$  and
\begin{equation}\label{controlkr}
    | V(x) |\leq M\frac{N(w)C^2_{w}}{x+1},
\end{equation}
where $M$ is an absolute constant.
\end{enumerate}

By our construction, one has
\begin{eqnarray}
% \nonumber to remove numbering (before each equation)
  \frac{J_w}{T_{w+1}} &\leq & 2 \frac{\sum_{i}^wN(i)T_i}{T_{w+1}} \\
  &\leq & 2\frac{N(w) }{C_{w+1}}\sum_{i=1}^w\frac{T_i}{T_w}\\
  &\leq & 4\frac{N(w) }{C_{w+1}}\label{Tk1Jk}.
\end{eqnarray}

The last inequality comes from \eqref{GTk}. %and our choice of $C_w$ to be large.

\subsection{Construction}

Define $V(x)=0$ for $x\in[0,1]$. %\dco{Is this supposed to be $[0,1]$? Later on we start construction of $V$ at $(1,J_w]$}.
 Let  $u(x,E_j)$ be the solution of
\begin{equation}\label{definu}
   H u=E_ju
\end{equation}
with boundary  condition
\begin{equation*}
    \frac{u^{\prime}(0,E_j)}{u(0,E_j)}=\tan\theta_j.
\end{equation*}

 We proceed by an induction argument. Suppose  we completed the construction  $V(x)$ for step $w$. That is we have given the definition of $u(x,E_j)$ on $(1,J_w]$ for all possible $j$.
Suppose also  $u(x,E_i)$ on $(1,J_w]$ for $i=1,2,\cdots,N(w)$ satisfies \eqref{eigenj}.
%And $w_{\kappa_j}(x)$ is well defined on $(1,J_k)$ and satisfy (\ref{eigenengj}), (\ref{1boundaryn}) and (\ref{eigenj})

Denote $B_{w+1}=\{E_i\}_{i=1}^{N(w+1)}$.
Applying Proposition \ref{Twocase} to $x_0=J_w$, $x_1=J_w+T_{w+1}$, $b=0$, $E=E_1$, $\tan\theta_0=\frac{u^\prime(J_w,E_1)}{u(J_w,E_1)}$ and $A= B_{w+1}\backslash \{E_1\}$,  we can define
$V(x,E_1,B_{w+1}\backslash \{E_1\},J_w,J_{w}+T_{w+1},0,\theta_0)$  on $x\in (J_w, J_w+T_{w+1}]$ since the boundary condition matches at the point $ J_w$ (guaranteed by $\tan\theta_0=\frac{u^\prime(J_w,E_1)}{u(J_w,E_1)}$). Thus we can define $u(x,E_j) $ on $(0,J_{w}+T_{w+1})$  for all possible $j$.
 Moreover,  letting $x_1=J_w+T_{w+1}$  in Proposition \ref{Twocase}, one has (by \eqref{thm142})
 \begin{eqnarray}
 % \nonumber to remove numbering (before each equation)
    R(J_w+T_{w+1},E_1) &\leq& (\frac{J_w+T_{w+1}}{J_w})^{-100}C_{w+1} R(J_w,E_1) \nonumber\\
    &\leq&   N(w)^{50} C_{w+1}^{-50} R(J_w,E_1), \label{Gkstep1}
 \end{eqnarray}
 since \eqref{Tk1Jk} holds and $C_{w+1}$  is chosen to be large.

We mention that now   the constant $C(E,A)$ in  Proposition \ref{Twocase} should be $C_{w+1}$.

 Applying  Proposition \ref{Twocase}  to $x_0=J_w+T_{w+1}$, $x_1=J_w+2T_{w+1}$, $b=T_{w+1}$, $E=E_2$, $A=B_{w+1}\backslash E_2$, and $\tan \theta_0=\frac{u^\prime(J_w+T_{w+1},E_2)}{u(J_w+T_{w+1},E_2)}$,  we can define
 ${V}(x,E_2, B_{w+1}\backslash E_2, J_w+T_{w+1}, J_w+2T_{w+1},T_{w+1},\theta_0)$ on $x\in (J_w+T_{w+1}, {J_w}+2T_{w+1}]$. Thus we can define $u(x,E_j) $ on $(0,J_{w}+2T_{w+1})$   for all possible $j$.
 Moreover,  letting $x_1=J_w+2T_{w+1}$ in  Proposition \ref{Twocase}, one has
 \begin{eqnarray}
 % \nonumber to remove numbering (before each equation)
   R(J_w+2T_{w+1},E_2) &\leq& (\frac{J_w+T_{w+1}}{J_w})^{-100}C_{w+1} R(J_w+T_{w+1},E_2)  \nonumber\\
     &\leq&  N(w)^{50} C_{w+1}^{-50} R(J_{w}+T_{w+1},E_2).\label{Gkstep2}
 \end{eqnarray}

 Suppose we give the definition  of $V$ and $u(x,E_j)$ for all $j$ on $(0,J_w+tT_{w+1}]$ for $t\leq N(w+1)-1$. Let us give the definition on $(0,J_w+(t+1)T_{w+1}]$.

 Applying  Proposition \ref{Twocase}  to $x_0=J_w+tT_{w+1}$, $x_1=J_w+(t+1)T_{w+1}$, $b=tT_{w+1}$, $E=E_{t+1}$, $A=B_{w+1}\backslash E_{t+1}$ and $\tan \theta_0=\frac{u^\prime(J_w+tT_{w+1},E_{t+1})}{u(J_w+tT_{w+1},E_{t+1})}$,  we can define
 ${V}(x,E_{t+1}, B_{w+1}\backslash E_{t+1}, J_w+tT_{w+1},J_w+(t+1)T_{w+1},tT_{w+1},\theta_0)$ on $x\in (J_w+tT_{w+1}, J_w+(t+1)T_{w+1})$. Thus we can define $u(x,E_j) $ on $(0,J_{w}+(t+1)T_{w+1}]$  for all possible $j$.
 Moreover,  letting $x_1=J_w+(t+1)T_{w+1}$ in  Proposition \ref{Twocase}, one has
 \begin{eqnarray}
 % \nonumber to remove numbering (before each equation)
    R(J_w+(t+1)T_{w+1},E_{t+1}) &\leq& (\frac{J_w+T_{w+1}}{J_w})^{-100}C_{w} R(J_w+tT_{w+1},E_{t+1}) \nonumber\\
   &\leq&  N(w)^{50} C_{w+1}^{-50} R(J_{w}+tT_{w+1},E_{t+1}). \label{Gkstept}
 \end{eqnarray}
 Thus we can define  on $(0,J_w+N(k+1)T_{w+1})=(0,J_{w+1})$ by induction for $ J_w+tT_{w+1}$.

 Let us mention that for $x\in[J_w+tT_{w+1},J_w+(t+1)T_{w+1}]$ and $0\leq t\leq N(w+1)-1$,
 \begin{equation}\label{Consv}
    V(x)= \widetilde V\left(x,E_{t+1}, B_{w+1}\backslash \{E_{t+1}\},J_k+tT_{k+1},J_k+(t+1)T_{k+1}, tT_{k+1},\frac{u^\prime(J_w+tT_{w+1},E_{t+1})}{u(J_w+tT_{w+1},E_{t+1})}\right),
 \end{equation}
 where $\widetilde V$ is taken from Proposition \ref{Twocase}.

 Now we should show that the definition satisfies the $w+1$ step conditions \eqref{eigenengj}-\eqref{controlkr}.

 Let us consider $R(x,E_i)$ for $i=1,2,\cdots,N(w+1)$.
 $R(x,E_i)$ decreases  from   point  $J_w+(i-1)T_{w+1}$ to $J_w+iT_{w+1}$, $i=1,2,\cdots,N(w+1)$, and
  may increase from any point $J_w+(m-1)T_{w+1}$ to $J_w+mT_{w+1}$, $m=1,2,\cdots,N(w+1)$ and $m\neq i$.
  That is
  \begin{equation*}
    R(J_w+iT_{w+1},E_i)\leq N^{50}(w)C_{w+1}^{-50} R(J_w+(i-1)T_{w+1},E_{i}),
  \end{equation*}
  and   for $m\neq i$,
  \begin{equation*}
    R(J_w+mT_{w+1},E_i)\leq 2 R(J_w+(m-1)T_{w+1},E_{i}),
  \end{equation*}
  by  Proposition \ref{Twocase}.

   Thus for $i=1,2,\cdots,N(w+1)$,
   \begin{equation*}
    R(J_{w+1},E_i)\leq 2^{N(w+1)} N(w)^{50} C_{w+1}^{-50} R(J_{w},E_i).
   \end{equation*}

This implies (\ref{eigenj}) for $w+1$.
%Now we need to modify $V$ a little such that it is smooth in $(0,J_{k+1})$ and all the estimates are unchanged.
%We still use $V$ as notation.\dco{The above should be stated as an actual theorem}

 By the construction of $V(x)$ \eqref{Consv}, \eqref{thm141}, and \eqref{Twdef} we have
 for $x\in[J_w+tT_{w+1},J_w+(t+1)T_{w+1}]$ and $0\leq t\leq N(w+1)-1$,
 \begin{equation}\label{b'1}
   |V(x)| \leq \frac{C_{w+1}}{x-tT_{w+1}}
   \leq \frac{C_{w+1}}{(J_w+tT_{w+1})-tT_{w+1}}
   = \frac{C_{w+1}}{J_w}.
 \end{equation}

 Furthermore, notice that by \eqref{Twdef} and \eqref{Jwdef}, for a constant $M$,

 \begin{equation}
\frac{T_{w+1}}{J_w}=\frac{T_{w}C_{w+1}}{J_w}<M C_{w+1}
 \end{equation}

 Recall that  $x\in[J_w+tT_{w+1},J_w+(t+1)T_{w+1}]$ and $0\leq t\leq N(w+1)-1$.% note that since $C_w$ is assumed to be very large for large $w$ we have, perhaps after adjusting our choice of $M$,

Direct computations show that
 \begin{align}
 \frac{1}{N(w+1)}+\frac{1}{J_wN(w+1)}+\frac{T_{w+1}}{J_w}<& 2C_{w+1}\nonumber\\
 1+\frac{1}{J_w}+N(w+1)\frac{T_{w+1}}{J_w}<& 2N(w+1)C_{w+1}\nonumber\\
  \frac{J_w+N(w+1)T_{w+1}+1}{J_w}<& 2N(w+1)C_{w+1}\nonumber\\
 \frac{1}{J_w}<& \frac{2N(w+1)C_{w+1}}{J_w+N(w+1)T_{w+1}+1}\nonumber\\
  \frac{1}{J_w}<& \frac{2N(w+1)C_{w+1}}{J_w+(t+1)T_{w+1}+1}\nonumber\\
    \frac{1}{J_w}<& \frac{100N(w+1)C_{w+1}}{x+1}\nonumber\\
    \frac{C_{w+1}}{J_w}<& \frac{100N(w+1)C_{w+1}^2}{x+1}\label{b'2}
 \end{align}

 By \eqref{b'1} and \eqref{b'2} we have for $x\in[J_w,J_{w+1}]$,
\begin{equation}
|V(x)| <   100\frac{N(w+1)C^2_{w+1}}{x+1}.\label{b'bound}
\end{equation}

 This implies \eqref{controlkr}.
 %where the first inequality holds since there only $N(w+1)$ eigenvalues involved.
 %\begin{equation*}%\label{Consv}
%   | V(x)|\leq V\left(x,E_{t+1},J_k+tT_{k+1},tT_{k+1},\frac{u^\prime(J_w+tT_{w+1},E_{t+1})}{u(J_w+tT_{w+1},E_{t+1})}\right).
% \end{equation*}

%he estimate (\ref{controlkr})  follows from the construction  \eqref{Consv}.%\dco{This comment is unclear to me. Why does \eqref{controlkr} hold?}

\subsection{Proof of Theorems \ref{mainthm2} and \ref{mainthm3}}
\begin{proof}
In the construction of Theorem \ref{mainthm2}, eventually $N(w)$ and $C_w$ are bounded.
In the construction of Theorem \ref{mainthm3},   $N(w)$ and $C_w$ grow to infinity arbitrarily slowly.
By \eqref{controlkr} and \eqref{Gapr92}, \eqref{Ggoalb} and \eqref{Ggoala} hold.

By (\ref{Gformulanorm}), it suffices to show that for any $j$, $R(x,E_j) \in L^2([1,\infty),dx)$.
Below we give the details.

For any $N(w_0-1)<j\leq N(w_0)$, by the construction (see (\ref{eigenj})), we have for $w\geq w_0$
\begin{eqnarray}
% \nonumber to remove numbering (before each equation)
    R(J_{w+1},E_j)&\leq& 2^{N(w+1)}  N(w)^{50} C_{w+1}^{-50} R(J_{w},E_j) \nonumber\\
    &\leq&    C_{w+1}^{-25} R(J_{w},E_j) \nonumber \\
    &\leq& T_{w_0}^{25}T_{w+1}^{-25} R(J_{w_0},E_j)\label{lastiteration}
\end{eqnarray}
where the second  inequality holds by \eqref{addliu} and the third inequality holds by \eqref{Twdef}.

By (\ref{thm143}), (\ref{thm144}), \eqref{addliu} and \eqref{lastiteration}, for all $x\in[J_{w+1},J_{w+2}]$,
\begin{eqnarray}
% \nonumber to remove numbering (before each equation)
  R(x,E_j) & \leq&   2^{N(w+2)}R(J_{w+1},E_j)\nonumber\\
  &\leq&   C_{w+2}R(J_{w+1},E_j)\nonumber\\
     &\leq&
     T_{w_0}^{25}T_{w+1}^{-24} R(J_{w_0},E_j).\label{lastGx}
\end{eqnarray}
\
%where the third inequality holds because  $N(k+1)$ grows slowly and then $C_{k+1}$ grows slowly.

Then by  (\ref{lastGx}), we have
\begin{eqnarray*}
% \nonumber to remove numbering (before each equation)
  \int_{J_{w_0+1}}^{\infty} R^2(x,E_j) dx&=&\sum_{w\geq w_0} \int_{J_{w+1}}^{J_{w+2}} R^2(x,E_j)dx\\
   &\leq&   \sum_{w\geq w_0}\int_{J_{w+1}}^{J_{w+2}}  T_{w_0}^{50}T_{w+1}^{-48} R^2(J_{w_0},E_j) dx\\
   &\leq& T_{w_0}^{50}R^2(J_{w_0},E_j) \sum_{w\geq w_0} N(w+2)T_{w+2}T_{w+1}^{-48} \\
   &=&  T_{w_0}^{50} R^2(J_{w_0},E_j)\sum_{w\geq w_0}N(w+2)C_{w+2}T_{w+1}^{- 47} \\
     &\leq&  T_{w_0}^{50} R^2(J_{w_0},E_j)\sum_{w\geq w_0}T_{w+1}^{-40} <\infty,
\end{eqnarray*}
since $N(w)$ and $C_w$ go to infinity slowly and $T_w$ satisfies \eqref{GTk}.
This completes the proof.
\end{proof}

\section{Generalized Pr\"ufer transformation and proof of Theorem \ref{Jthm:NoEmbedde}}\label{sec:Jacobi1}
This section is mostly a summary of the generalized Pr\"ufer variables developed in \cite{lukdcmv}. At the end of the section, we prove Theorem \ref{Jthm:NoEmbedde}. In \eqref{Jeq:JacobiUnperturbed}, we have a Jacobi matrix $  J$ with coefficients $a_n > 0$, $b_n \in \mathbb{R}$,  viewed as an operator $J_0$ on $\ell^2(\mathbb Z_{\geq 0})$. We consider also its perturbation, a Jacobi matrix $  { J}$ with coefficients $a_n + a_n' > 0$, $b_n + b_n' \in \mathbb{R}$, and  viewed as an operator $J$ on $\ell^2(\mathbb Z_{\geq 0})$. For  $E\in \cup (c_k,d_k)$, let $\varphi$ be a Floquet solution given by \eqref{JGflo}.\liu{ Without loss of generality, assume $|\varphi(0)|^2+|\varphi(1)|^2=1$.}
Obviously,
\begin{equation}\label{Jvarphi}
a_{n+1}\varphi(n+1)+b_{n+1}\varphi(n)+a_n \varphi(n-1) = E\varphi(n),
\end{equation}
We also consider an eigensolution $u$ for $H$,
\begin{equation}\label{Ju}
(a_{n+1}+ a_{n+1}')u(n+1)+(b_{n+1}+ b_{n+1}')u(n)+(a_n+ a_n') u(n-1) = Eu(n).
\end{equation}
We define $\gamma(n)$ as the argument of $\varphi(n)$. In other words,
 \begin{equation}\label{JGflo1}
 \varphi(n)=\vert\varphi(n)\vert e^{i\gamma(n)}.
 \end{equation}
We can ensure uniqueness of $\gamma$ by setting $\gamma(0)\in [0,2\pi)$, $\gamma(n)-\gamma(n-1)\in [0,2\pi)$.

Note that $\varphi$ is complex, and is linearly independent with its complex conjugate $\bar\varphi$. On the other hand, we assume that $u$ is a real-valued eigensolution.

We now introduce $Z(n)$. Our Pr\"ufer variables will be define as the argument and absolute value of $Z(n)$. It is defined as follows:
\begin{align}
\label{Jrho1}
\begin{pmatrix}
(a_n+ a_n')u(n)\\
u(n-1)
\end{pmatrix}=&\frac{1}{2i}\left(
Z(n)\begin{pmatrix}
a_n\varphi(n)\\
\varphi(n-1)
\end{pmatrix}
-\overline {Z(n)}
\begin{pmatrix}
a_n\overline{\varphi(n)}\\
\overline{\varphi(n-1)}
\end{pmatrix}
\right)
\\
=&
\label{Jrho2}\mathrm{Im}\left[
Z(n)
\begin{pmatrix}
a_n\varphi(n)\\
\varphi(n-1)
\end{pmatrix}
\right].
\end{align}
By linear independence of $\varphi$ and $\bar\varphi$ and reality of $u$, \eqref{Jrho1} uniquely determines $Z(n)$. The Pr\"ufer amplitude $R(n) > 0$ and Pr\"ufer phase $\eta(n) \in \mathbb{R}$ are defined as
\begin{equation}\label{JZReta}
Z(n) = R(n) e^{i\eta(n)}.
\end{equation}

We will also need a few alternate versions of the Wronskian. For two sequences $f,g$, we have
\begin{align*}
W_{0,0}(f,g)(n)=& a_{n+1} f(n)g(n+1)-a_{n+1}f(n+1)g(n),\\
W_{a',a'}(f,g)(n)=& (a_{n+1}+a_{n+1}') f(n)g(n+1)-(a_{n+1}+a_{n+1}')f(n+1)g(n),\\
W_{0,a'}(f,g)(n)=& (a_{n+1}+a_{n+1}') f(n)g(n+1)-a_{n+1}f(n+1)g(n).
\end{align*}

If we assume
    \[a_{n+1}f(n+1)+a_nf(n-1)=(x-b_{n+1})f(n),\] and \[(a_{n+1}+a_{n+1}')g(n+1)+(a_{n}+a_{n}')g(n-1)=(x-b_{n+1}-b_{n+1}')g(n),\]
   then
 \begin{align}\nonumber
 W_{0,a'}(f,g)(n)-W_{0,a'}(f,g)(n-1)=&-b'_{n+1}f(n)g(n)\\
 &-a'_n(f(n)g(n-1)+f(n-1)g(n)).\label{JWronskianDifference}
 \end{align}
 Since $\varphi, \overline\varphi$ are linearly independent solutions of (\ref{Jvarphi}), by constancy of the Wronskian, we have
 \begin{equation}\label{Jomegadefn}
 W_{0,0}(\overline\varphi,\varphi)(n)=2ia_{n+1}\mathrm{Im}(\overline{\varphi(n)}\varphi(n+1))=i\omega,
 \end{equation}
 for some real nonzero constant $\omega$.
Thus,
 \begin{equation}\label{Jgamma-omega}
 2 \vert\varphi(n)\vert\cdot\vert \varphi(n+1)\vert a_{n+1}\sin(\gamma(n+1)-\gamma(n))=\omega.
\end{equation}
We can use Wronskians to invert (\ref{Jrho1}) to get
\begin{equation}\label{Jrho-omega}
Z(n)=\frac{2}{\omega}W_{0,a'}(\overline \varphi,u)(n-1).
\end{equation}

\begin{theorem}[Theorem 5 of \cite{lukdcmv}]\label{Jt.OPRL} Pr\"ufer variables obey the first-order recursion relation
\begin{align*} &\frac{Z(n+1)}{Z(n)} \\
 = &  1-\frac{i}{\omega} \frac{a_n}{a_n+a_n'} b_{n+1}'\vert \varphi(n)\vert^2 (e^{-2i(\eta(n)+\gamma(n))} -1)\\
&+\frac{i}{\omega}  a_{n}'\vert \varphi(n-1)\vert\cdot \vert \varphi(n)\vert e^{i(\gamma(n-1)-\gamma(n))}\\
&-\frac{i}{\omega}  a_{n}'\vert \varphi(n-1)\vert\cdot \vert \varphi(n)\vert e^{-2i\eta(n)}e^{-i(\gamma(n-1)+\gamma(n))}\\
&+\frac{i}{\omega}\frac{a_n}{a_n+a_n'} a_{n}' (1-e^{-2i(\eta(n)+\gamma(n))})\vert \varphi(n-1)\vert\cdot \vert \varphi(n)\vert e^{-i(\gamma(n-1)-\gamma(n))}.
\end{align*}
\end{theorem}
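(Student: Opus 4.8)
The plan is to obtain the recursion by a direct computation from the inversion formula \eqref{Jrho-omega} and the Wronskian difference identity \eqref{JWronskianDifference}; there is no conceptual ingredient beyond these two formulas, only careful bookkeeping with the phases $\gamma(n)$ and $\eta(n)$.

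First I would note that, since $a_n$, $b_n$ and $E$ are real, $\overline{\varphi}$ is again a solution of the unperturbed recursion \eqref{Jvarphi}, so (with $x=E$) the pair $f=\overline{\varphi}$, $g=u$ satisfies the hypotheses of \eqref{JWronskianDifference}. Evaluating \eqref{Jrho-omega} at $n+1$ and at $n$ and subtracting gives
\begin{align*}
Z(n+1)-Z(n)=\frac{2}{\omega}\bigl[W_{0,a'}(\overline{\varphi},u)(n)-W_{0,a'}(\overline{\varphi},u)(n-1)\bigr],
\end{align*}
so that \eqref{JWronskianDifference} turns this into
\begin{align*}
Z(n+1)-Z(n)=\frac{2}{\omega}\Bigl[-b_{n+1}'\,\overline{\varphi(n)}\,u(n)-a_n'\,\overline{\varphi(n)}\,u(n-1)-a_n'\,\overline{\varphi(n-1)}\,u(n)\Bigr].
\end{align*}

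Next I would eliminate $u$ using \eqref{Jrho2}, namely $(a_n+a_n')u(n)=\mathrm{Im}[Z(n)a_n\varphi(n)]$ and $u(n-1)=\mathrm{Im}[Z(n)\varphi(n-1)]$. Writing $\mathrm{Im}[w]=\tfrac{1}{2i}(w-\bar w)$ and using $\overline{Z(n)}/Z(n)=e^{-2i\eta(n)}$,
\begin{align*}
\frac{u(n)}{Z(n)}=\frac{a_n}{a_n+a_n'}\cdot\frac{1}{2i}\bigl(\varphi(n)-e^{-2i\eta(n)}\overline{\varphi(n)}\bigr),\qquad \frac{u(n-1)}{Z(n)}=\frac{1}{2i}\bigl(\varphi(n-1)-e^{-2i\eta(n)}\overline{\varphi(n-1)}\bigr).
\end{align*}
Substituting these into the preceding display, dividing by $Z(n)$, and expanding every product of $\varphi$'s via $\varphi(n)=|\varphi(n)|e^{i\gamma(n)}$ --- so $\overline{\varphi(n)}\varphi(n)=|\varphi(n)|^2$, $\overline{\varphi(n)}\,\overline{\varphi(n)}=|\varphi(n)|^2e^{-2i\gamma(n)}$, $\overline{\varphi(n)}\varphi(n-1)=|\varphi(n)||\varphi(n-1)|e^{i(\gamma(n-1)-\gamma(n))}$, $\overline{\varphi(n-1)}\varphi(n)=|\varphi(n-1)||\varphi(n)|e^{-i(\gamma(n-1)-\gamma(n))}$, and $\overline{\varphi(n-1)}\,\overline{\varphi(n)}=|\varphi(n-1)||\varphi(n)|e^{-i(\gamma(n-1)+\gamma(n))}$ --- and using $-1/i=i$, one arrives at $Z(n+1)/Z(n)=1+(Z(n+1)-Z(n))/Z(n)$ in exactly the stated form. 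The four displayed summands come, in order, from the $b_{n+1}'$ term, the two halves of $-a_n'\overline{\varphi(n)}u(n-1)$, and the $-a_n'\overline{\varphi(n-1)}u(n)$ term.

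The main (and essentially only) obstacle is bookkeeping. The one delicate point is that the damping factor $\tfrac{a_n}{a_n+a_n'}$ enters only through the first slot of \eqref{Jrho2}, so it attaches precisely to the terms carrying a factor $u(n)$ --- the $b_{n+1}'$ term and the $-a_n'\overline{\varphi(n-1)}u(n)$ term --- but not to the $-a_n'\overline{\varphi(n)}u(n-1)$ term; misplacing it is the easiest error. Recasting the last summand into the form written in the theorem also uses the identity $e^{-i(\gamma(n-1)-\gamma(n))}e^{-2i(\eta(n)+\gamma(n))}=e^{-2i\eta(n)}e^{-i(\gamma(n-1)+\gamma(n))}$, which is what allows one to factor out $\bigl(1-e^{-2i(\eta(n)+\gamma(n))}\bigr)$. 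The constant $\omega$ is carried symbolically throughout and never needs to be evaluated.
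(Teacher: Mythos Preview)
Your proposal is correct. The paper does not give its own proof of this statement: it is quoted verbatim as Theorem~5 of \cite{lukdcmv} and used as a black box. Your derivation --- take the difference $Z(n+1)-Z(n)$ via \eqref{Jrho-omega}, apply the Wronskian identity \eqref{JWronskianDifference}, and then eliminate $u(n)$, $u(n-1)$ using \eqref{Jrho2} --- is the natural computation and reproduces the formula exactly; I checked the four terms and the placement of the factor $a_n/(a_n+a_n')$ and they all come out as stated.
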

\begin{remark}
\liu{In this paper, we assume  $a_n^{\prime}= o(1) $ and $b_n^{\prime}= o(1) $. } Since
$a_n,b_n$ are periodic, $a_n,a_n+a_n^{\prime}>0$ for all $n$, then
\begin{equation}\label{JGbound}
  \frac{1}{a_n+a_n^{\prime}}=O(1).
\end{equation}
\end{remark}
We define the Pr\"ufer amplitude $R$ and the Pr\"ufer phase $\eta$ by
\begin{equation}
R(n)=\vert Z(n)\vert, \eta(n)=\mathrm{Arg}(Z(n)).
\end{equation}
 In that case, we have

 \begin{equation}\label{Jeq:R/R=|Z/Z|}
 \frac{R(n+1)}{R(n)}=\left \vert \frac{Z(n+1)}{Z(n)}\right\vert.
  \end{equation}

Note the following bound on $R(n)$:

\begin{proposition} \label{JPboundUR}For a constant $K$ (depending on $H_0$ and $E$),
\begin{equation*}
\liu{ \frac{1}{K}\sqrt{u(n)^2+u(n-1)^2} \leq   R(n)\leq K\sqrt{u(n)^2+u(n-1)^2}.}
\end{equation*}

%
%\begin{align*}
%&\frac{\sqrt{|(a_n+a_n')^2u(n)^2+u(n-1)^2}}{K\sqrt{a_n^2|\varphi(1)|^2+|\varphi(0)|^2}}\\
%\leq & R(n)\\
%\leq &\frac{2}{\omega}\left( Ka_{n}\sqrt{|\varphi(1)|^2+|\varphi(0)|^2}\sqrt{u(n)^2+u(n-1)^2}+|a_{n}'\varphi(n-1)u(n)|\right).
%\end{align*}
\end{proposition}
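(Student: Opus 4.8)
\emph{Proof proposal.} The plan is to extract both inequalities directly from the two defining relations for the Pr\"ufer variables that we already have: the ``forward'' identity \eqref{Jrho2}, which expresses $(a_n+a_n')u(n)$ and $u(n-1)$ in terms of $Z(n)$, and the ``backward'' identity \eqref{Jrho-omega}, which expresses $Z(n)$ as a Wronskian of $\overline{\varphi}$ and $u$. The only structural facts I will invoke are: $|\varphi(n)|$ is bounded above (it equals $|p(n)|$ with $p$ a $q$-periodic sequence by \eqref{JGflo}); $a_n$ is bounded above by periodicity; and $a_n+a_n'$ is bounded below, which is exactly \eqref{JGbound}. Consequently $\omega\neq 0$ and $\frac{a_n}{a_n+a_n'}=O(1)$, and all implied constants depend only on $H_0$ and $E$ (together with the fixed bounded perturbation).

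For the lower bound on $R(n)$, substitute $Z(n)=R(n)e^{i\eta(n)}$ and $\varphi=|\varphi|e^{i\gamma}$ into \eqref{Jrho2} to get
\begin{equation*}
(a_n+a_n')u(n)=R(n)\,a_n|\varphi(n)|\sin(\eta(n)+\gamma(n)),\qquad u(n-1)=R(n)\,|\varphi(n-1)|\sin(\eta(n)+\gamma(n-1)).
\end{equation*}
Taking absolute values, using $|\sin|\le 1$, and then dividing the first identity by $a_n+a_n'$, one obtains $|u(n)|\le C\,R(n)$ and $|u(n-1)|\le C\,R(n)$, hence $\sqrt{u(n)^2+u(n-1)^2}\le \sqrt{2}\,C\,R(n)$. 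This is the left-hand inequality.

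For the upper bound on $R(n)$, expand \eqref{Jrho-omega} using the definition of $W_{0,a'}$:
\begin{equation*}
R(n)=|Z(n)|=\frac{2}{|\omega|}\bigl|(a_n+a_n')\overline{\varphi(n-1)}\,u(n)-a_n\overline{\varphi(n)}\,u(n-1)\bigr|\le \frac{2}{|\omega|}\bigl((a_n+a_n')|\varphi(n-1)|\,|u(n)|+a_n|\varphi(n)|\,|u(n-1)|\bigr).
\end{equation*}
Bounding $a_n$, $a_n'$, $|\varphi(n-1)|$, $|\varphi(n)|$ by constants gives $R(n)\le C'\bigl(|u(n)|+|u(n-1)|\bigr)\le C'\sqrt{2}\,\sqrt{u(n)^2+u(n-1)^2}$, which is the right-hand inequality. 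I do not expect any real obstacle here; the only point needing a word of care is checking that $a_n+a_n'$ is uniformly bounded below so that $\frac{a_n}{a_n+a_n'}$ is bounded, which is precisely \eqref{JGbound} (valid since $a_n$ is periodic and positive and $a_n'=o(1)$), and that $|\varphi(n)|$ is uniformly bounded above, which is immediate from the Floquet form \eqref{JGflo}. This proposition is the discrete analogue of the norm comparison \eqref{Gformulanorm} in Proposition \ref{thmformula}, and will be used in the same way: $u\in\ell^2(\Z_{\geq 0})$ is equivalent to $\{R(n)\}\in\ell^2$.
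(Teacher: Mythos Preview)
Your proof is correct and follows exactly the paper's approach: the paper derives the left inequality from \eqref{Jrho2} and the right inequality from \eqref{Jrho-omega} together with Cauchy--Schwarz, which is precisely what you do (your triangle-inequality step followed by $|a|+|b|\le\sqrt{2}\sqrt{a^2+b^2}$ is Cauchy--Schwarz). Your write-up is in fact more detailed than the paper's two-line proof, and your remark that the constant also depends on the perturbation (via the uniform lower bound on $a_n+a_n'$) is a fair observation.
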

\begin{proof}
The left inequality simply follows from  \eqref{Jrho2}. The right inequality follows from \eqref{Jrho-omega} and the Cauchy-Schwarz inequality.
\end{proof}

Let us set $a'_j=0$ for all $j$. This changes Theorem \ref{Jt.OPRL} into a much simpler formula,

\begin{equation}\label{Jeq:SimplerFormula}
\frac{Z(n+1)}{Z(n)} \\
 =   1-\frac{i}{\omega}  b_{n+1}'\vert \varphi(n)\vert^2 (e^{-2i(\eta(n)+\gamma(n))} -1)\\
\end{equation}

 Using \eqref{Jeq:SimplerFormula} and \eqref{Jeq:R/R=|Z/Z|} we have

 \begin{align}
 \frac{R(n+1)^2}{R(n)^2}=& \left(1-\frac{i}{\omega}b_{n+1}' \vert\varphi(n)\vert^2(\cos(2\eta(n)+2\gamma(n))-i\sin(2\eta(n)+2\gamma(n))-1)\right)\nonumber\\
 &\times\left(1+\frac{i}{\omega}b_{n+1}' \vert\varphi(n)\vert^2(\cos(2\eta(n)+2\gamma(n))+i\sin(2\eta(n)+2\gamma(n))-1)\right)\nonumber\\
 =& 1-b_{n+1}'\frac{2}{\omega}\sin(2\eta(n)+2\gamma(n))\vert \varphi(n)\vert^2\nonumber \\
 &+\frac{4(b'_{n+1})^2\vert \varphi(n)\vert^4}{\omega^2}\sin^2(\eta(n)+\gamma(n)).\label{eq:sin2}
 \end{align}

Also, starting with \eqref{Jeq:SimplerFormula} and multiplying by $Z(n)e^{i\gamma(n)}$ we obtain

\begin{align}\nonumber
&R(n+1)\exp(i\eta(n+1)+i\gamma(n))\\
=&R(n)\exp(i\eta(n)+i\gamma(n))\nonumber -\frac{i}{\omega}b'_{n+1}\vert \varphi(n)\vert^2(\exp(-i\eta(n)\\
&-i\gamma(n))-\exp(i\eta(n)+i\gamma(n)))R(n).
\end{align}
Dividing the real part by the imaginary part for both sides of the above equation, we get

\begin{equation}\label{Jeq:cot}
\cot(\eta(n+1)+\gamma(n))=\cot(\eta(n)+\gamma(n)) -\frac{2}{\omega}b'_{n+1}\vert \varphi(n)\vert^2
\end{equation}

\begin{proof}[ \bf Proof of Theorem \ref{Jthm:NoEmbedde}]
Suppose $u$ is an eigensolution with corresponding   $E\in (c_k,d_k)$.
By Theorem \ref{Jt.OPRL}, \eqref{JGbound} and  \eqref{Jeq:R/R=|Z/Z|}, we have
\begin{equation*}
  \frac{R(n+1)}{R(n)}=1-\frac{o(1)}{n}.
\end{equation*}
This implies that
\begin{equation}
  \ln R(n+1)-\ln R(n)=\frac{o(1)}{n}.
\end{equation}
Thus for large $n_0$, and $n>n_0$, we have
\begin{equation}
  \ln R(n)\geq \ln R(n_0)-\frac{1}{3}\sum_{k=n_0}^n\frac{1}{k}.
\end{equation}
This implies for large $n$,
\begin{equation*}
  R(n)\geq  \frac{1}{Cn^{\frac{1}{3}}}.
\end{equation*}
This contradicts $u\in \ell^2(\Z_{\geq 0})$ by Proposition \ref{JPboundUR}.
\end{proof}

 \section{The perturbative construction in the Jacobi setting}\label{sec:Jacobi2}
We always assume  $a_n^\prime=0$. In this section, all the equations are in the discrete setting.  We indicate the dependence on $E$; thus we will write $R(n,E)$, $Z(n,E)$, $\eta(n,E)$  and $\gamma(n,E)$.
Let  $\theta (n,E)=\eta(n,E)+\gamma(n,E)$.

By \eqref{Jeq:cot} and \cite[Prop.2.4]{kiselev1998modified}, one has
\begin{equation*}
    (\eta(n+1)+\gamma(n))- (\eta(n)+\gamma(n))=O( |b'_{n+1}|).
\end{equation*}
This implies
\begin{equation}\label{JLT}
  \theta(n+1,E)-\theta(n,E)=\gamma(n+1,E)-\gamma(n,E)+O(|b_{n+1}^\prime|).
\end{equation}
We will add  another  equation to complete our construction. Using \eqref{eq:sin2} we get:
\begin{equation}
   \ln R(n+1,E) -\ln R(n,E)= - \frac{b_{n+1}'}{\omega}\sin(2\eta(n,E)+2\gamma(n,E))\vert \varphi(n,E)\vert^2 + O(|b_{n+1}^\prime|^2) .\label{JLR}
\end{equation}

We will construct $b_n^\prime$ in a piecewise manner.  Let $J_0$ be the periodic operator with Jacobi coefficient sequences $a_n,b_n$ and $J_0+b'\text{Id}$ be the perturbation with coefficient sequences $a_n, b_n+b'_n$.
\begin{proposition}\label{JTwocase}
%Let  $\lambda>0$ is a eigenvalue of the  $ -\frac{d^2}{dx^2}+\hat{V}$ iff $\lambda=\kappa^2$.
Let  $E $ be in $\cup_\ell(c_\ell,d_\ell)$  such that $k(E)\neq \frac{\pi}{2}$. Let $ A=\{{E}_j\}_{j=1}^m$ be in $\cup_\ell(c_\ell,d_\ell)$ such  that $k(E)\neq k(  {E}_j)$ and $k(E)+k( {E}_j)\neq \pi$ for all $j=1,2,\cdots,m.$
%Let $\hat{E}$ be another energy  in  $\cup_{\ell}(a_{\ell},b_{\ell})$ such that $k(\hat{E})\neq k(E)$.
Suppose  $\theta_0\in(0,\pi)$. Let $n_1>n_0>v$.
Then there exist constants $K(E, A)$, $C(E, A)$ (independent of $v, n_0$ and $n_1$) and perturbation $b'_n(E,A,n_0,n_1,v,\theta_0)$  such that  for $n_0-v>K(E,A)$ the following holds:

   \begin{description}
     \item[Perturbation]   for $n_0\leq n \leq n_1$, ${\rm supp}(b')\subset(n_0,n_1)$,  and
     \begin{equation}\label{Jthm141a}
        | b'_n(E,A,n_0,n_1,v,\theta_0)|\leq \frac{C(E, A)}{n-v}.
     \end{equation}

     \item[Solution for $E$] the solution of $(J_0+  b'\mathrm{Id})u=Eu$ with boundary condition $\theta(n_0,E)=\theta_0$ satisfies
     \begin{equation}\label{Jthm142a}
        R(n_1,E)\leq C(E, A)(\frac{n_1-v}{n_0-v})^{-100} R(n_0,E)
     \end{equation}
     and  for $n_0<n<n_1$,
      \begin{equation}\label{Jthm143a}
        R(n,E)\leq C(E,A)  R(n_0,E).
     \end{equation}
     In particular, for any $\varepsilon>0$, if $\frac{n_1-v}{n_0-v}>K(E,A,\varepsilon)$,
     \begin{equation}\label{Jthm1446a}
        R(n,{E})\leq    (\frac{n_1-v}{n_0-v})^{\varepsilon}R(n_0, {E}).
     \end{equation}
      \item[Solution for $ {E}_j$] any  solution of $(H_0+ b'\mathrm{Id})u={E}_ju$ satisfies
      for $n_0<n\leq n_1$ and $\varepsilon>0$,
      \begin{equation}\label{Jthm144a}
        R(n, {E}_j)\leq  D(E,A,\varepsilon) (\frac{n_1-v}{n_0-v})^{\varepsilon}R(n_0, {E}_j).
     \end{equation}
     In particular, if $\frac{n_1-v}{n_0-v}>K(E,A,\varepsilon)$,
     \begin{equation}\label{Jthm1445a}
        R(n,{E}_j)\leq    (\frac{n_1-v}{n_0-v})^{\varepsilon}R(n_0,{E}_j).
     \end{equation}
   \end{description}
 \end{proposition}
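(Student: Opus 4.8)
The plan is to mirror the continuous construction of Proposition \ref{Twocase}, but working with the discrete Prüfer recursions \eqref{JLT} and \eqref{JLR} in place of the differential equations \eqref{GformulaR}, \eqref{Gformulatheta}. First I would define the analogue of the nonlinear ODE \eqref{Gnonlinear}: set
\begin{equation*}
\theta(n+1,E)-\theta(n,E)=\gamma(n+1,E)-\gamma(n,E)+\frac{C}{1+n-v}\,\sin 2\theta(n,E)\,|\varphi(n,E)|^2\cdot\text{(sign factors)},
\end{equation*}
which amounts to choosing $b'_{n+1}=-\frac{C}{1+n-v}\sin 2\theta(n,E)$ (modulo the $\frac{a_n}{a_n+a_n'}=1$ normalization, since $a'_n=0$), and solving this recursion forward from $n_0$ with the boundary condition $\theta(n_0,E)=\theta_0$. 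Then \eqref{JLR} gives $\ln R(n+1,E)-\ln R(n,E)=-\frac{C}{1+n-v}\sin^2 2\theta(n,E)|\varphi(n)|^2+O\!\big(\frac{1}{(1+n-v)^2}\big)$, and I would use a discrete version of the Fourier-expansion argument of Proposition \ref{Fourier}: writing $\sin^2 2\theta=\tfrac12(1-\cos 4\theta)$, the $\cos 4\theta$ piece telescopes/sums to $O(1)$ (the discrete analogue of \eqref{Gfourier1}), so the logarithm drops by at least $100\ln\frac{n_1-v}{n_0-v}$, giving \eqref{Jthm142a}, while the running partial sums are nonpositive up to $O(1)$, giving \eqref{Jthm143a}. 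The error terms $O(|b'_{n+1}|^2)=O\!\big(\frac{C^2}{(1+n-v)^2}\big)$ are summable and contribute only a bounded constant $C(E,A)$.

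For the other energies $E_j$, the solution $u(n,E_j)$ obeys \eqref{JLT}–\eqref{JLR} with the \emph{same} $b'_{n+1}=-\frac{C}{1+n-v}\sin 2\theta(n,E)$ but with $\theta(n,E_j)$ governed by its own recursion. So $\ln R(n,E_j)-\ln R(n_0,E_j)=-\frac{C}{\omega}\sum\frac{\sin 2\theta(n,E)\sin(2\theta(n,E_j)+\cdots)}{1+n-v}|\varphi(n,E_j)|^2+O(1)$, and I would invoke the discrete analogue of \eqref{Gfourier2}: using $\gamma(n,E)=\frac{k(E)}{q}n+\eta(n)$ with $\eta$ periodic, the product of sines becomes a sum of $\cos$'s of phases $\frac{k(E)\pm k(E_j)}{q}n+O(\ln n)$; the hypothesis $k(E)\neq k(E_j)$ and $k(E)+k(E_j)\neq\pi$ guarantees these phases are genuinely oscillating with a nonzero linear rate, so Abel summation against the slowly-varying weight $\frac{1}{1+n-v}$ gives a bound $O\!\big(\frac{1}{n_0-v}\big)$ for the sum. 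This shows $R(n,E_j)\le C(E,A)R(n_0,E_j)$; the refined bounds \eqref{Jthm1446a}, \eqref{Jthm1445a}, \eqref{Jthm144a} follow by taking $\frac{n_1-v}{n_0-v}$ large enough that the fixed constant $C(E,A)$ is absorbed into a factor $(\frac{n_1-v}{n_0-v})^\varepsilon$. Finally, to get $b'_n\in$ the desired class and supported in $(n_0,n_1)$ I would, exactly as in Proposition \ref{Twocase}, truncate $b'_n$ to vanish outside $(n_0,n_1)$ and adjust it near the endpoints; since $R$ depends continuously on finitely many coordinates of $b'$, the estimates survive (no smoothness is needed in the discrete case).

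The main obstacle I anticipate is the discrete oscillatory-sum estimate replacing Proposition \ref{Fourier}. In the continuous setting the key lemma controlled $\int \frac{\cos(2\pi\ell y - 2(\tilde\theta(y,E)-\tilde\theta(y,\hat E)))}{1+y-b}\,dy$ by carefully locating the zeros $x_i$ of the phase and exploiting sign cancellation between consecutive intervals; in the discrete setting these become sums over blocks of integers, and the block lengths $\pi/\tilde\ell$ are generally not integers, so the cancellation between a block and its neighbor is only approximate, producing extra error terms. As the introduction warns, these terms are no longer $O(1)$ but grow like $\varepsilon\ln n$, which is precisely why the statement is phrased with an arbitrary $\varepsilon>0$ and the extra constants $D(E,A,\varepsilon)$, $K(E,A,\varepsilon)$ rather than the clean bounds of Proposition \ref{Twocase}. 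Handling this carefully — showing the $\varepsilon\ln\frac{n_1-v}{n_0-v}$ loss is the only damage and that it can be made as small as desired by choosing the window ratio large — is the technical heart of the argument. The detailed computation is carried out in the Appendix.
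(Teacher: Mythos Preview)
Your strategy---defining $b'_{n+1}$ as a $\frac{C}{n-v}\sin 2\theta(n,E)$--type perturbation and then tracking $\ln R$ via \eqref{JLR}---matches the paper exactly (see \eqref{JGcons2} and \eqref{JPrufRcon}), and \eqref{Jthm141a}, \eqref{Jthm143a} fall out just as you say. Where you diverge is the oscillatory-sum estimate replacing Proposition~\ref{Fourier}. The paper does \emph{not} port the Fourier-expansion plus zero-location argument to the discrete setting. Instead its Lemma~\ref{JLcon1} splits on whether $k(E)/\pi$ is rational or irrational: in the rational case $k(E)/\pi=N_1/N$ one has the exact cancellation $\sum_{j=0}^{N-1}\cos(4jk(E)+\phi)=0$, and grouping into blocks of length $Nq$ gives a genuine $O(1/(n_0-v))$ bound; in the irrational case the paper invokes the ergodic (Weyl) theorem to choose $N=N(\varepsilon)$ with $\bigl|\sum_{j=0}^{N-1}\cos(4jk(E)+\phi)\bigr|\le N\varepsilon$, and it is precisely this averaging step---not any ``non-integer block length'' issue---that produces the $D(E,A,\varepsilon)+\varepsilon\ln\frac{n-v}{n_0-v}$ loss appearing in the statement.

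Your final paragraph therefore misidentifies the mechanism: the $\varepsilon\ln n$ error is a feature of the paper's ergodic argument, not an obstruction to your Abel-summation proposal. In fact, if you carry your route through carefully---group by residue mod $q$ to absorb the periodic weights $f(t)$ and the periodic part of $\gamma$, use $e^{i4k(E)}\neq 1$ (this is where $k(E)\neq\pi/2$ enters) and $e^{i2(k(E)\pm k(E_j))}\neq 1$ to bound the inner partial sums, then sum by parts against $1/(n-v)$---you obtain a uniform $O(1/(n_0-v))$ bound with no $\varepsilon$ at all, just as in the continuous case. So your paragraph-two claim is correct, your paragraph-three worry is unfounded for your own method, and you would end up proving slightly more than the proposition actually asserts. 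The paper's rational/irrational dichotomy is shorter to write down but yields the weaker bound; your approach requires one more summation by parts but recovers the sharp continuous estimate.
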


For simplicity, denote by $K=K(E,A)$,  $C=C(E,A)$  etc.. We mention that \[K\gg C>0.\]
   Recall that $\gamma(n,E)$ is the argument of $\varphi$ and is therefore fixed.  We solve the following equation for $\eta(n,E)$ with initial condition $\eta(n_0,E)=\theta_0-\gamma(n_0,E)$ (or in other words,  $\theta(n_0,E)=\theta_0$):
   \begin{equation}\label{JPrufTc}
   \cot(\eta(n+1,E)+\gamma(n,E))=\cot(\eta(n,E)+\gamma(n,E)) -\frac{2}{\omega}b'_{n+1}\vert \varphi(n,E)\vert^2
\end{equation}
with
\begin{equation}\label{JGcons2}
   b'_{n+1}=b'_{n+1}(E,A,n_0,n_1,v,\theta_0)=\frac{C}{n-v}\sin(2\eta(n)+2\gamma(n)).
\end{equation}
We will show that this choice of $b'_n$ satisfies our construction.
Obviously,  \eqref{Jthm141a} follows from \eqref{JGcons2}.

First, we require a technical lemma:

\begin{lemma}\label{JLcon1}

Let $b'_n$ be given in \eqref{JGcons2}, and let $E$ and $A$ satisfy the assumptions of Proposition \ref{JTwocase}. Let $f(n)$ be a sequence with $q$ period.
For any $\varepsilon>0$, there exists $D(E,A,\varepsilon)$ such that

\begin{equation}\label{JGcons4}
  \left|  \sum _{t=n_0}^n f(t)\frac{\cos 4 \theta(t,E)}{t-v}\right|\leq D(E,A,\varepsilon)+ {\varepsilon}\ln \frac{n-v}{n_0-v},
\end{equation}
and
\begin{equation}\label{JGcons5}
    \left|\sum_{t=n_0}^n f(t)\frac{\sin 2 \theta(t,E_j) \sin 2 \theta(t,E)}{t-v}\right|\leq D(E,A,\varepsilon)+{\varepsilon}\ln  \frac{n-v}{n_0-v},
\end{equation}
for all $E_j\in A$.
\end{lemma}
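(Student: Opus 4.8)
\textbf{Proof plan for Lemma \ref{JLcon1}.}

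The plan is to mimic the strategy of Proposition \ref{Fourier} in the continuous setting, adapting it to the discrete summation framework. First I would substitute the explicit form of $b'_{n+1}$ from \eqref{JGcons2} into the recursion \eqref{JLT}, obtaining $\theta(n+1,E)-\theta(n,E)=\gamma(n+1,E)-\gamma(n,E)+O(1/(n-v))$, and similarly $\theta(n+1,E_j)-\theta(n,E_j)=\gamma(n+1,E_j)-\gamma(n,E_j)+O(1/(n-v))$ for the other energies $E_j$ (here I use that $|b'_{n+1}|=O(1/(n-v))$). Writing $\gamma(n,E)=\frac{k(E)}{q}n+\eta_\gamma(n,E)$ where $\eta_\gamma$ is $q$-periodic in $n$ modulo $2\pi$ (the discrete analogue of \eqref{Gfequ3}), I would peel off the periodic part, setting $\tilde\theta(n,E)=\theta(n,E)-\eta_\gamma(n,E)$, so that $\tilde\theta(n+1,E)-\tilde\theta(n,E)=\frac{k(E)}{q}+O(1/(n-v))$ is essentially a linear phase with a slowly-varying error. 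Then $\cos 4\theta(t,E)$ and $\sin 2\theta(t,E_j)\sin 2\theta(t,E)$ become finite linear combinations of terms $\cos(\beta t+2\tilde\theta(t,E)\pm 2\tilde\theta(t,E_j))$ and $\cos(\beta t+4\tilde\theta(t,E))$, with products of $q$-periodic functions (coming from $f(t)$ and the $\eta_\gamma$ terms) as coefficients, and $\beta$ running over $2\pi k/q$, $k\in\mathbb Z$. After a finite Fourier expansion of the $q$-periodic coefficients, everything reduces to bounding sums of the form $\sum_{t=n_0}^n \frac{\cos(\lambda t + \Phi(t))}{t-v}$ where $\Phi'(t)=c+O(1/(t-v))$ and $\lambda + c\neq 0 \pmod{2\pi}$ — this nonvanishing is exactly where the hypotheses $k(E)\neq k(E_j)$, $k(E)+k(E_j)\neq\pi$, and $k(E)\neq\pi/2$ are used, ensuring the relevant combined frequencies stay away from $0$ modulo $2\pi$.

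For the key one-frequency estimate I would follow the block-summation argument of Proposition \ref{Fourier}: partition $[n_0,n]$ into consecutive blocks $[m_i,m_{i+1})$ on which the total phase $\lambda t+\Phi(t)$ increases by $\pi$, estimate the block lengths as $\pi/\tilde\lambda + O(1/(\tilde\lambda(m_i-v)))$ where $\tilde\lambda$ is the effective nonzero frequency, use the sign change of the cosine at the block endpoints to get cancellation between adjacent blocks, and absorb the weight $1/(t-v)$ via summation by parts (Abel summation), with the discrete second difference of $1/(t-v)$ being $O(1/(t-v)^2)$ summing to $O(1/(n_0-v))$. In the continuous case this produces an $O(1/(x_0-b))$ bound; in the discrete case, as the authors flag in the introduction, the analogous terms pick up an extra factor that grows like $\varepsilon\ln\frac{n-v}{n_0-v}$, so I would expect to get a bound of the shape $D(E,A,\varepsilon)+\varepsilon\ln\frac{n-v}{n_0-v}$ rather than a uniform constant — the $\varepsilon$ coming from choosing $n_0-v$ large (via $K(E,A,\varepsilon)$) so that the $O(1/(n-v))$ phase error is small enough over each block, and the $\ln$ accounting for the accumulation of the $O(1/(t-v))$ errors in the discrete block-length estimate. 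The constant $D(E,A,\varepsilon)$ collects the boundary block contributions and the Cauchy–Schwarz factor $\sum_k(|c_k|^2+|d_k|^2)<\infty$ from the Fourier expansion of the periodic coefficients, just as $\sum c_\ell^2+d_\ell^2<\infty$ was used in Proposition \ref{Fourier}.

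The main obstacle, and the genuinely discrete difficulty, is controlling the block-length estimates and the error accumulation well enough to land at $\varepsilon\ln\frac{n-v}{n_0-v}$ rather than something worse like $\ln^2$ or a constant-times-$\ln$ with no small parameter. Concretely, in $\tilde\lambda|m_{i+1}-m_i|=\pi+O(1/(m_i-v))$ the errors must be summable against the weight, and over $t$ ranging up to $n$ there are $\sim \tilde\lambda^{-1}\ln\frac{n-v}{n_0-v}$ blocks, so one must show the per-block error times the number of blocks, after the weight and the cancellation are accounted for, only contributes $\varepsilon\ln\frac{n-v}{n_0-v}$; this forces the requirement $\frac{n_1-v}{n_0-v}$ or $n_0-v$ be large in terms of $\varepsilon$, which is precisely why $K(E,A,\varepsilon)$ and $D(E,A,\varepsilon)$ in Proposition \ref{JTwocase} depend on $\varepsilon$ whereas their continuous counterparts did not. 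I would also need to be careful that the $O(1/(t-v))$ in \eqref{JLT} really is uniform — this uses \eqref{JGbound} and the boundedness of $|\varphi(n)|$ from periodicity — and that the $O(|b'_{n+1}|^2)$-type terms appearing when one iterates are harmlessly summable, contributing only to $D(E,A,\varepsilon)$.
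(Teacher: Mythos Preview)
Your plan is viable but takes a genuinely different route from the paper, and your explanation of where the $\varepsilon$ comes from is off.

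The paper does \emph{not} mimic Proposition~\ref{Fourier}. Instead it splits into two cases according to whether $k(E)/\pi$ is rational or irrational. In either case it groups the sum into blocks of length $Nq$ (using that $f$ is $q$-periodic and that $\theta(t+q,E)-\theta(t,E)=k(E)+O(1/(t-v))$ by \eqref{JLT}), and uses that the inner sum $\sum_{j=0}^{N-1}\cos(4jk(E)+\phi)$ is exactly $0$ in the rational case (since $k(E)\neq\pi/2$ forces denominator $N\ge 3$) and is $\le N\varepsilon$ in the irrational case by Weyl equidistribution (the ergodic theorem). The $\varepsilon$ in the lemma is precisely this equidistribution error, and $N=N(\varepsilon)$ is the block length needed to achieve it; this is why $D$ depends on $\varepsilon$. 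No Fourier expansion, no Abel summation, no phase-$\pi$ blocks.

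Your approach---peel off the $q$-periodic part of $\gamma$, do a finite Fourier expansion in the $q$-periodic coefficients, then estimate $\sum_t \cos(\mu t+\psi(t))/(t-v)$ with $\psi(t+1)-\psi(t)=O(1/(t-v))$ via two rounds of Abel summation---should actually yield an $O_{E,A}(1)$ bound, \emph{stronger} than what the lemma states, since the partial sums $\sum_{s\le t}e^{i\mu s}e^{i\psi(s)}$ grow only like $\ln((t-v)/(n_0-v))$ and $\sum_t \ln(t-v)/(t-v)^2$ converges. So your ``main obstacle'' paragraph misidentifies the difficulty: the phase-$\pi$ block argument from Proposition~\ref{Fourier} is indeed awkward discretely (you cannot hit exact phase values at integer times), but Abel summation sidesteps this entirely, and the $\varepsilon$ is not needed in your approach. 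Your claim that ``the $\varepsilon$ comes from choosing $n_0-v$ large via $K(E,A,\varepsilon)$'' is incorrect---$K(E,A,\varepsilon)$ belongs to Proposition~\ref{JTwocase}, not to this lemma, and the lemma's bound must hold for all $n_0$.

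In short: both approaches work. The paper's is shorter and more elementary, exploiting the arithmetic of $k(E)$ directly; yours is closer to the continuous-case machinery and, done carefully, gives a tighter bound but requires more bookkeeping.
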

\begin{proof}
We only give the proof of \eqref{JGcons4}. The proof of \eqref{JGcons5} proceeds similarly.

Case 1: $\frac{k(E)}{\pi} $ is rational. Since $k(E)\notin \frac{\pi}{2}$,  we can assume  $\frac{k(E)}{\pi}=\frac{N_1}{N}$ for some $N\geq 3$.
Thus for any $\phi$,
 \begin{equation}\label{JGMar21}
    \sum_{j=0}^{N-1} \cos (4   j k(E)+\phi)=0.
 \end{equation}
 By \eqref{JGflo}, \eqref{JGflo1}, \eqref{JLT} and \eqref{Jthm141a}, one has
 \begin{equation}\label{JLT2}
 k(E)= \theta(n_0+q,E)-\theta(n_0,E)+O\left(\frac{1}{n_0-v}\right)\mod \Z.
\end{equation}

 Iterating, we obtain for any positive integer $j\leq N-1$,

 \begin{equation}\label{JLT2b}
 jk(E)= \theta(n_0+jq,E)-\theta(n_0,E)+O\left(\frac{1}{n_0-v}\right)\mod \Z.
\end{equation}

Thus by \eqref{JGMar21} and \eqref{JLT2b}, we can translate $n_0$ by $p$ and use $\phi=\theta(n_0+p,E)$ to get
\begin{equation*}
    \sum_{j=0}^{N-1} \cos 4    \theta(n_0+jq+p,E)=O\left(\frac{1}{n_0+p-v}\right),
 \end{equation*}
 for all $p=0,1,\cdots,q-1$.
 This implies
 \begin{equation*}
    \sum_{j=0}^{N-1}   f(n_0+jq+p) \frac{ \cos 4\theta(n_0+jq+p,E)}{n_0+jq+p}=\frac{O(1)}{(n_0+p-v)^2},
 \end{equation*}
 for all $p=0,1,\cdots,q-1$.
 Let us define an integer $w$ so that $w$ is the largest integer such that $n-n_0\geq Nqw-1$. Then
 \begin{eqnarray}
 % \nonumber to remove numbering (before each equation)
  \left|  \sum _{t=n_0}^n f(t)\frac{\cos 4 \theta(t,E)}{t-v}\right| &\leq& \frac{|O(1)|}{n_0-v}+\sum_{i=n_0}^{n_0+q-1+Nqw}\frac{|O(1)|}{(i-v)^2} \nonumber \\
    &\leq&\frac{|O(1)|}{n_0-v}+ \sum_{i=n_0}^{\infty}\frac{|O(1)|}{(i-v)^2}\\
    &=&\frac{|O(1)|}{n_0-v}. \label{Jeq:tSum}
 \end{eqnarray}

This completes the proof of \eqref{JGcons4} for rational $\frac{k(E)}{\pi}$.

Case 2: $\frac{k(E)}{\pi}$ is irrational.
By the ergodic theorem,
for any $\varepsilon>0$, there exists $N>0$ such that
 \begin{equation}\label{JGMar22}
    \left|\sum_{j=0}^{N-1} \cos (4   j k(E)+\phi)\right|\leq N\varepsilon.
 \end{equation}
 By \eqref{JGMar22} and \eqref{JLT2}, one has
\begin{equation*}
    \sum_{j=0}^{N-1} \cos 4    \theta(n_0+jq+p,E)\leq N\left(\varepsilon+ O\left(\frac{1}{n_0-v}\right)\right),
 \end{equation*}
% \liu{The above equation should be
% \begin{equation*}
%    \sum_{j=0}^{N-1} \cos 4    \theta(n_0+jq+p,E)\leq N\left(\varepsilon+ O\left(\frac{1}{n_0+jq-v}\right)\right),
% \end{equation*}}
 for all $p=0,1,\cdots,q-1$.
 This implies
 \begin{equation}
    \sum_{j=0}^{N-1}  f(n_0+jq+p) \frac{\cos 4  \theta(n_0+jq+p,E)}{n_0+jq-p}\leq N\left (\frac{\varepsilon}{n_0-v}+\frac{O(1)}{(n_0-v)^2}\right),
 \end{equation}
 for all $p=0,1,\cdots,q-1$.
% \liu{ The above equation should be
%  \begin{equation}
%    \sum_{j=0}^{N-1}   \frac{\cos 4  \theta(n_0+jq+p,E)}{n_0+jq-p}\leq N\left (\frac{\varepsilon}{n_0+jq-v}+\frac{O(1)}{(n_0+jq-v)^2}\right),
% \end{equation}}

 We note that

 \[
 \sum_{j=n_0}^{n}\frac{1}{j-v}\leq O(1)\ln\left(\frac{n-v}{n_0-v}\right).
 \]
 %\liu{should be
% \[
% \sum_{j=0}^{w}\frac{1}{n_0+jq-v}\lesssim\ln\left(\frac{n-v}{n_0-v}\right).
% \]}
 Thus, performing an estimate analogous to \eqref{Jeq:tSum} we obtain
 \begin{equation*}
     \left|  \sum _{t=n_0}^n f(t)\frac{\cos 4 \theta(t,E)}{t-v}\right|  \leq D(E,A,\varepsilon)+\varepsilon\ln  \left(\frac{n-v}{n_0-v}\right).
 \end{equation*}
This concludes our proof of \eqref{JGcons4} for irrational $\frac{k(E)}{\pi}$.
\end{proof}

   \begin{proof}[\bf Proof of Proposition \ref{JTwocase}]
Equation \eqref{JLR} becomes
\begin{equation}\label{JPrufRcon}
   \ln R(n+1,E)-\ln R(n,E)= -\vert \varphi(n,E)\vert^2 \frac{C}{n-v}\sin^2(2\eta(n)+2\gamma(n))+\frac{\vert O(1)\vert}{(n-v)^2}.
\end{equation}

This implies
\begin{equation}\label{JGcons1a}
    \ln R(n+1,E)-\ln R(n,E) \leq \frac{C}{(n-v)^2}.
\end{equation}
It is easy to see that \eqref{Jthm143a} follows from \eqref{JGcons1a} since $n_0-v>K$.

Rewrite  \eqref{JPrufRcon} as
\begin{equation}\label{JPrufRcon1}
    \ln R(n+1,E)- \ln R(n,E)= -\vert \varphi(n,E)\vert^2 \frac{C}{n-v}+O(1)\vert \varphi(n,E)\vert^2 \frac{\cos 4 \theta(n,E)}{n-v}+\frac{O(1)}{(n-v)^2}.
\end{equation}
Applying \eqref{JGcons4} with $\varepsilon=1$ to \eqref{JPrufRcon1}, we have for $n\geq n_0$,
\begin{eqnarray}
\ln R(n,E)- \ln R(n_0,E) &\leq& \sum_{t=n_0}^n-\frac{C}{t-v}+O(1)\vert \varphi(n,E)\vert^2 \frac{\cos 4 \theta(t,E)}{j-v}+\frac{O(1)}{(t-v)^2}\nonumber\\
% \nonumber to remove numbering (before each equation)
   &\leq& C-C\ln (\frac{n-v}{n_0-v}).\nonumber
\end{eqnarray}
This implies \eqref{Jthm142a}.

Now let us consider  the   solution $u(n, {E}_j)$ of $(H_0+b^{\prime}\rm{Id})u={E}_ju$.

By \eqref{JLR} again, one has
\begin{equation*}
  \ln R(n+1,{E}_j) -\ln R(n,{E}_j)=- C \vert \varphi(n,E_j)\vert^2 \frac{\sin 2 \theta(n,E) \sin 2 \theta(n,E_j)}{n-b}+ \frac{O(1)}{(n-b)^2}
\end{equation*}
By  \eqref{JGcons4}   (following Lemma \ref{JLcon1}) and following the proof of \eqref{Jthm142a}, we can prove  \eqref{Jthm144a}. We finish the proof.
   \end{proof}

\begin{proof}[\bf Proof of Theorems   \ref{Jthm:FiniteEmbedded}  and \ref{Jthm:InfiniteEmbedded}]
Replacing Prop.\ref{Twocase} with Prop.\ref{JTwocase}, Theorems   \ref{Jthm:FiniteEmbedded}  and \ref{Jthm:InfiniteEmbedded} can be proved in a similar way of
Theorems \ref{mainthm2} and \ref{mainthm3}. The   difference is that there is a new parameter $\varepsilon$ involved. We write the details in full in the Appendix.
\end{proof}
\appendix
\section{Proof of Theorems   \ref{Jthm:FiniteEmbedded}  and \ref{Jthm:InfiniteEmbedded}}
 We will give the construction of the perturbation $b'$. The idea is to glue the potential $b'(n,E,A,x_0,x_1,v,\theta_0)$ in a piecewise manner like the procedure of continuous case.

 Let us fix a band of the absolutely continuous spectrum, and  enumerate the desired embedded eigenvalues in our band spectrum as $E_j$ (we always assume there are countably many). Let $N:\mathbb Z_{\geq 0}\to \mathbb Z_{\geq 0}$ be a non-decreasing  function, $N(1)=1$ and $N(w)$ grows  very slowly  (in other words, we expect $N(w)=N(w+1)$ to be true for ``most" $w\in\mathbb Z_{\geq 0}$). Furthermore, we define $N$ so if $N(w+1)>N(w)$ then $N(w+1)=N(w)+1$.
 Let
 \begin{equation}\label{JGepsi}
   \varepsilon_w=\frac{1} {100 N(w)}.
 \end{equation}
Let $C_{w}$ be a large constant that depends on the eigenvalues $E_1\ldots E_{N(w)}$. We write
\begin{equation}\label{JDeCk}
 C_{w}=C(E_1,E_2,
\cdots,E_{N(w)}).
\end{equation}
We emphasize that the dependence of $C_{w+1}$ on the $E_j$ does not take into account multiplicity. Thus if $N(w+1)=N(w+2)$ (which we expect to happen very frequently) then $C_{w+1}=C_{w+2}$.
Let
$K_w$ be large enough such that $ K_w>K(E,\{E_j\}_{j=1}^{N(w)}\backslash E,\varepsilon_w)$  for all $E\in \{E_j\}_{j=1}^{N(w)}$ in Proposition \ref{JTwocase}.
%Let
%\begin{equation}\label{DeCk}
 %C_{k+1}=\max\{K_{N(k+1)}, 2^{ N(k+1)},100,\kappa_1,\kappa_2,
%\cdots,\kappa_{N(k+1)}\}.
%\end{equation}

We have $N(w)=\max_j N(j)$ for sufficiently large $w$ in the construction of Theorem \ref{Jthm:FiniteEmbedded} and we instead have $\lim_{w}N(w)=\infty$ in the construction of Theorem \ref{Jthm:InfiniteEmbedded}.

 Define
 \begin{equation}\label{JTwdef}
 T_{w+1}=T_wC_{w+1}
 \end{equation} and $T_0=C_1$.
By modifying $C_w$, we can assume   $T_w$ is large enough so that  \[T_w\geq K_w\] for any $E\in \{E_j\}_{j=1}^{N(w)}$  in   Proposition \ref{JTwocase}.

%Let  $\lambda>0$ is a eigenvalue of the  $ -\frac{d^2}{dx^2}+\hat{V}$ iff $\lambda=\kappa^2$.
Let $E_j$ and $\theta_j$ be given by Theorem \ref{Jthm:FiniteEmbedded} and Theorem \ref{Jthm:InfiniteEmbedded}.
Fix $w$.
By Proposition \ref{JTwocase},
then there exist constants $K_w$, $C_w$ (independent of $v, n_0$ and $n_1$) and perturbation $b'(n,E_j,A,n_0,n_1,v,\theta_0)$  such that  for $n_0-v>K_w$ the following holds:

   \begin{description}
     \item[Potential]   for $n_0\leq n \leq n_1$, ${\rm supp}( V)\subset(n_0,n_1)$,  and
     \begin{equation}\label{Jthm141}
        | b'(n,E_j,A,n_0,n_1,v,\theta_0)|\leq \frac{C_w}{n-v}.
     \end{equation}

     \item[Solution for $E_j$] the solution of $(H_0+ b'\mathrm{Id})u=E_ju$ with boundary condition $\theta(n_0,E_j)=\theta_0$ satisfies
     \begin{equation}\label{Jthm142}
        R(n_1,E_j)\leq C_w(\frac{n_1-v}{n_0-v})^{-100} R(n_0,E)
     \end{equation}
     and  for $n_0<n<n_1$,
      \begin{equation}\label{Jthm143}
        R(n,E_j)\leq (\frac{n_1-v}{n_0-v})^{\varepsilon_w} R(n_0,E_j).
     \end{equation}
      \item[Solution for $ {E}_{j^\prime}$ with $j^\prime\neq j$ ]  any  solution of $(H_0+b'\mathrm{Id})u={E}_{j^\prime}u$ satisfies
      for $n_0<n\leq n_1$,
      \begin{equation}\label{Jthm144}
        R(n,{E}_{j^\prime})\leq   (\frac{n_1-v}{n_0-v})^{\varepsilon_w}R(n_0,{E}_{j^\prime}).
     \end{equation}
   \end{description}

On the other hand,  if $N(w)  $  goes to infinity arbitrarily slowly, then  $C_w  $   can also  go  to infinity arbitrarily slowly. %This doesn't contradict our previous statement that $T_w$ is ``large enough", since we can choose the $C_w$ to be large but also choose it to be constant for long stretches of $w\in\mathbb Z_{\geq 0}$.
%We do however choose $C_w$ so that it goes to infinity faster than $N(w)$:
 Let us in fact choose $C_w$ so that
\begin{equation}\label{Jaddliu}
    C_{w}\geq 4^{N(w+1)}.
\end{equation}

We can also assume for large $w$,
\begin{equation}\label{JGTk}
    T_w\geq 1000^{w}.
\end{equation}
and for large $w$,
\begin{equation*}
    C_w\leq \ln w.
\end{equation*}
Thus eventually, one has
\begin{equation}\label{JGMar3}
  C_{w+1}\leq T_w.
\end{equation}
Let
\begin{equation}\label{JJwdef}
J_w=\sum_{i}^wN(i) T_i .
\end{equation}
By letting $N(w)$ go to infinity arbitrarily slow, we assume
\begin{equation}\label{JGapr91}
 C_w^2N(w)\leq \frac{1}{100} \min_{n\in [J_{w-1},J_w]}h(n),
\end{equation}
where $h(n)$ is given by Theorem \ref{Jthm:InfiniteEmbedded}.

%Notice that $ J_w$ and $T_w$ go to infinity faster than $C_w$. More precisely, we will have $C_w/J_w$ and $C_w/T_w$ both tending to $0$ as $w$ tends to infinity.

We will also define potential  $b_{n}^\prime$   and $u(n,E_j)$, $j=1,2,\ldots $ on $(0,J_w)$ by induction, such that
\begin{enumerate}[1.]
\item
$u(n,E_j)$ solves  for $n\in (0,J_w)$
\begin{align}\label{Jeigenengj}
     Ju(n,E_j)
     =E_j  u(n,E_j),
\end{align}
and satisfies  boundary condition
\begin{equation}\label{J1boundaryn}
    \frac{u (1,E_j)}{u(0,E_j)}=\tan\theta_j,
\end{equation}
\item
$u(n,E_i)$  for $i=1,2,\cdots, N(w)$ and $w\geq 2$, satisfies
\begin{eqnarray}
R(J_w,E_i)
  &\leq& 2^{N(w)}  N(w)^{50}C_{w}^{-50}R(J_{w-1},E_i).\label{Jeigenj}
\end{eqnarray}

\item
\begin{equation}\label{Jcontrolkr}
    | b_{n}^\prime |\leq 100\frac{N(w)C^2_{w}}{n+1}，
\end{equation}
for $J_{w-1}\leq n\leq J_w$.
%where $M$ is an absolute constant.
\end{enumerate}

By our construction, one has
\begin{eqnarray}
% \nonumber to remove numbering (before each equation)
  \frac{J_w}{T_{w+1}} &\leq & 2 \frac{\sum_{i}^wN(i)T_i}{T_{w+1}} \\
  &\leq & 2\frac{N(w) }{C_{w+1}}\sum_{i=1}^w\frac{T_i}{T_w}\\
  &\leq & 4\frac{N(w) }{C_{w+1}}\label{JTk1Jk}.
\end{eqnarray}

The last inequality comes from \eqref{JTwdef} and \eqref{Jaddliu}

 Let  $u(n,E_j)$ be the solution of
\begin{equation}\label{Jdefinu}
   J u=E_ju
\end{equation}
with boundary  condition
\begin{equation*}
    \frac{u(1,E_j)}{u(0,E_j)}=\tan\theta_j.
\end{equation*}

% We construct  $b'_{J_w}$ for step $w$ in an inductive manner identical to that of the construction of $V(x)$ in  \cite[JSection 5]{ld}. Of course, we replace $V(x)$ with $b'_n$ and $x$ with $n$.

 Now we should show that the $b'$ derived from this construction satisfies the $w+1$-step conditions \eqref{Jeigenengj}-\eqref{Jcontrolkr}. %There are small but important differences between our version and the version in \cite{ld}, so for the readers' convenience we rewrite everything explicitly.

 Let us consider $R(n,E_i)$ for $i=1,2,\cdots,N(w+1)$.
 $R(n,E_i)$ decreases  from   point  $J_w+(i-1)T_{w+1}$ to $J_w+iT_{w+1}$, $i=1,2,\cdots,N(w+1)$, and
  may increase from any point $J_w+(m-1)T_{w+1}$ to $J_w+mT_{w+1}$, $m=1,2,\cdots,N(w+1)$ and $m\neq i$.
  That is
  \begin{equation*}
    R(J_w+iT_{w+1},E_i)\leq N^{50}(w)C_{w+1}^{-50} R(J_w+(i-1)T_{w+1},E_{i}),
  \end{equation*}
  and  for $m\neq i$ (see \eqref{Jthm144}),
  \begin{equation}\label{JC^epsilon}
    R(J_w+mT_{w+1},E_i)\leq   C_{w+1}^{\varepsilon_{w+1}}R(J_w+(m-1)T_{w+1},E_{i}),
  \end{equation}
  by  Proposition \ref{JTwocase}.

   Thus by \eqref{JGepsi}, for $i=1,2,\cdots,N(w+1)$,
   \begin{equation*}
    R(J_{w+1},E_i)\leq   N(w)^{50} C_{w+1}^{-50} C_{w+1}^{N(w+1)\varepsilon_w}R(J_{w},E_i)\leq  N(w)^{50} C_{w+1}^{-49} R(J_{w},E_i).
   \end{equation*}

This implies (\ref{Jeigenj}) for $w+1$.
%Now we need to modify $V$ a little such that it is smooth in $(0,J_{k+1})$ and all the estimates are unchanged.
%We still use $V$ as notation.\dco{The above should be stated as an actual theorem}
By the construction of $b_n^\prime$, we have
\begin{equation}
|b_n^\prime| <   100\frac{N(w+1)C^2_{w+1}}{n+1},\label{Jb'bound}
\end{equation}
for $J_w\leq n\leq J_{w+1}$.
 This implies \eqref{Jcontrolkr}.

\begin{proof}[\bf Proof of Theorems \ref{Jthm:FiniteEmbedded} and \ref{Jthm:InfiniteEmbedded}]
In the construction of Theorem \ref{Jthm:FiniteEmbedded}, eventually $N(w)$ and $C_w$ are bounded.
In the construction of Theorem \ref{Jthm:InfiniteEmbedded},   $N(w)$ and $C_w$ grow to infinity arbitrarily slowly.
By \eqref{Jcontrolkr} and \eqref{JGapr91}, \eqref{JGgoalb} and \eqref{JGgoala} hold.

It suffices to show that for any $j$, $R(n,E_j) \in \ell^2$.
Below we give the details.

For any $N(w_0-1)<j\leq N(w_0)$, by the construction (see (\ref{Jeigenj})), we have for $w\geq w_0$
\begin{eqnarray}
% \nonumber to remove numbering (before each equation)
    R(J_{w+1},E_j)&\leq&  N(w)^{50} C_{w+1}^{-49} R(J_{w},E_j) \nonumber\\
    &\leq&    C_{w+1}^{-25} R(J_{w},E_j) \nonumber \\
    &\leq& T_{w_0}^{25}T_{w+1}^{-25} R(J_{w_0},E_j)\label{Jlastiteration}
\end{eqnarray}
where the second  inequality holds by \eqref{Jaddliu} and the third inequality holds by \eqref{JTwdef}.

By  \eqref{JGepsi}, (\ref{Jthm143}), (\ref{Jthm144}), \eqref{Jaddliu}, \eqref{Jlastiteration} ,\eqref{JC^epsilon} and \eqref{JGMar3}, for all $n\in[J_{w+1},J_{w+2}]$,
\begin{eqnarray}
% \nonumber to remove numbering (before each equation)
  R(n,E_j) & \leq&   C_{w+2}^{N_{w+2}\varepsilon_{w+2}}R(J_{w+1},E_j)\nonumber\\
  &\leq&   C_{w+2}^{N_{w+2}\varepsilon_{w+2}}T_{w_0}^{25}T_{w+1}^{-25} R(J_{w_0},E_j)\nonumber\\
     &\leq&
     T_{w_0}^{25}T_{w+1}^{-24} R(J_{w_0},E_j).\label{JlastGx}
\end{eqnarray}
\
%where the third inequality holds because  $N(k+1)$ grows slowly and then $C_{k+1}$ grows slowly.

Then by  (\ref{JlastGx}), we have
\begin{eqnarray*}
% \nonumber to remove numbering (before each equation)
  \sum_{n=J_{w_0+1}}^{\infty} R^2(n,E_j)  &=&\sum_{w\geq w_0}\sum_{n=J_{w+1}}  ^{J_{w+2}} R^2(n,E_j)\\
   &\leq&   \sum_{w\geq w_0}\sum_{n=J_{w+1}}  ^{J_{w+2}}  T_{w_0}^{50}T_{w+1}^{-48} R^2(J_{w_0},E_j) \\
   &\leq& T_{w_0}^{50}R^2(J_{w_0},E_j) \sum_{w\geq w_0} N(w+2)T_{w+2}T_{w+1}^{-48} \\
   &=&  T_{w_0}^{50} R^2(J_{w_0},E_j)\sum_{w\geq w_0}N(w+2)C_{w+2}T_{w+1}^{- 47} \\
     &\leq&  T_{w_0}^{50} R^2(J_{w_0},E_j)\sum_{w\geq w_0}T_{w+1}^{-40} <\infty,
\end{eqnarray*}
since $N(w)$ and $C_w$ go to infinity slowly and $T_w$ satisfies \eqref{JGTk}.
This completes the proof.
\end{proof}

 \section*{Acknowledgments}
 %The authors are very grateful to the anonymous referees for their knowledgeable reports, which helped us to improve our manuscript.
    W.L. was supported by the AMS-Simons Travel Grant 2016-2018, NSF DMS-1401204 and  NSF DMS-1700314. D.O . was supported by a Xiamen University Malaysia Research Fund (Grant No: XMUMRF/2018-C1/IMAT/0001). The authors also wish to thank Jake Fillman, Svetlana Jitomirskaya, Milivoje Lukic, Christian Remling, and the anonymous referee for helpful conversations and comments.

 %We are grateful to the Isaac
  %  Newton Institute for Mathematical Sciences, Cambridge, for support
   % and hospitality during the programme Periodic and Ergodic Spectral
   % Problems where this work was started.
 %Department of Mathematics, University of California at
  %Irvine for warm hospitality, especially thank Professor Martin Schechter for constant encouragement and helpful discussions.

\footnotesize

\end{document}